\newtheorem{theorem}{Theorem}[section]
\newtheorem{prop}{Proposition}[section]
\newtheorem{leme}{Lemma}[section]
\newtheorem{dfnt}{Definition}[section]
\newtheorem{remark}{Remark}
\newtheorem{assumption}{Assumption}[section]
\newenvironment{proof}[1][Proof]{\textbf{#1.} }{\ \rule{0.5em}{0.5em}}
\numberwithin{equation}{section}
\def \dis {\displaystyle}
\def \R {\mathbb{R}}
\def \O {\mathcal{O}}
\def\dT{dx\, dt}
\def \hvarphi \widehat{\varphi}
\def\dq{dx\,dt}
\def \hvarphi \widehat{\varphi}
\begin{document}
  \title{Hierarchical null controllability of a semilinear degenerate parabolic equation with a gradient term}
 \author{{{Landry Djomegne} \thanks{{\it
 				University of Dschang, BP 67 Dschang, Cameroon, West region,
 				email~: {\sf landry.djomegne\char64yahoo.fr} }}}
 	{{ \quad Cyrille Kenne} \thanks{{\it Laboratoire LAMIA, Universit\'e des Antilles, Campus Fouillole, 97159 Pointe-{\`a}-Pitre Guadeloupe (FWI)-Laboratoire L3MA, UFR STE et IUT, Universit\'e des Antilles, Schoelcher, Martinique,
 				email~: {\sf kenne853\char64gmail.com} }}}
 	{{ \quad Ren\'e Dorville} \thanks{{\it
 				Laboratoire L3MA, UFR STE et IUT, Universit\'e des Antilles, Schoelcher, Martinique,
 				email~: {\sf rene.dorville\char64orange.fr} }}}
 			{{ \quad Pascal Zongo} \thanks{{\it
 						Laboratoire L3MA, UFR STE et IUT, Universit\'e des Antilles, Schoelcher, Martinique,
 						email~: {\sf pascal.zongo\char64gmail.com} }}}
 }

  \date{\today}
  
  \maketitle	
  	

\begin{abstract}
In this paper, we apply the hierarchical strategy to a semilinear weakly degenerate parabolic equation involving a gradient term. We use the Stackelberg-Nash strategy with one leader which tries to drive the solution to zero and two followers intended to solve a Nash equilibrium corresponding to a bi-objective optimal control problem. Since the system is semilinear, the functionals are not convex in general. To overcome this difficulty, we first prove  the existence and uniqueness of the Nash quasi-equilibrium, which is a weaker formulation of the Nash equilibrium. Next, with additional conditions, we establish the equivalence between the Nash quasi-equilibrium and the Nash equilibrium. We establish a suitable Carleman inequality for the adjoint system and then an observability inequality. Based on this observability inequality, we prove the null controllability of the linearized system. Then, due to the Kakutani's fixed point Theorem, we obtain the null controllability of the main system.
\end{abstract}

\textbf {2010} Mathematics Subject Classification. {35K55; 35K65; 91A65; 93B05; 93C20.}\par
\noindent
{\textbf {Key-words}}~:~Degenerate parabolic system; Carleman inequality; Null controllability; Stackelberg-Nash strategy.

\section{Introduction}\paragraph{}

Let $\Omega=(0,1)\subset\R$.  Let $\omega$, $\omega_1$ and $\omega_2$ be three nonempty subsets of $\Omega$ with $\omega_i\cap\omega=\emptyset,\ i=1,2$.  For the  real number $T > 0$, we denote $Q =(0, T )\times \Omega$,
$\omega_T =(0, T )\times\omega $, $\omega_{1,T} =(0, T )\times\omega_1 $ and $\omega_{2,T} =(0, T )\times\omega_2 $. We are interested in the following degenerate semilinear parabolic equation:

  \begin{equation}\label{eq}
  \left\{
  \begin{array}{rllll}
  \dis y_{t}-\left(a(x)y_{x}\right)_{x}+F(y,y_x)  &=&h\chi_{\omega}+v^1\chi_{\omega_1}+v^2\chi_{\omega_2}& \mbox{in}& Q,\\
  \dis y(t,0)=y(t,1)&=&0& \mbox{on}& (0,T), \\
  \dis  y(0,\cdot)&=&y^0 &\mbox{in}&\Omega,
  \end{array}
  \right.
  \end{equation}
  where $y=y(t,x)$ denotes the state, $v^i=v^i(t,x),\ i=1,2$ and $h=h(t,x)$ are different control functions  whose act on the system through the subsets $\omega_{i}$ and $\omega$ respectively. These functions $v^i$ and $h$ represent respectively the followers and leader controls. Here, $\chi_{\mathcal{A}}$ denotes the characteristic function of the set $\mathcal{A}$. The function $y^0\in L^2(\Omega)$ is the initial data.  We denote by $y_{t}:=\frac{\partial y}{\partial t}$ and $y_{x}:=\frac{\partial y}{\partial x}$, the partial derivative of $y$ with respect to $t$ and $x$ respectively.
  
  The function $a:=a(\cdot)$ represents the dispersion coefficient and we assume that it satisfies the following hypothesis
  \begin{equation}\label{k}
  	\begin{array}{llll}
  		\dis a\in \mathcal{C}([0,1])\cap\mathcal{C}^1((0,1]),\ \ a>0\ \mbox{in}\ (0,1] \ \mbox{and}\ a(0)=0,\\
  		\dis \exists \tau\in [0,1)\ :\ xa^\prime(x)\leq \tau a(x),\ x\in [0,1].
  	\end{array}
  \end{equation}
  For further need, we consider as in \cite{carmelo2020} a function $\beta(x)$ such that
  \begin{equation}\label{b}
  	\frac{\beta(x)}{x}\in L^\infty(\Omega).
  \end{equation}

  \begin{assumption}\label{assum}$ $
  	 The function $F:\mathbb{R}\times\mathbb{R}\rightarrow \mathbb{R}$ satisfies the following:
  \begin{itemize}
  	\item[($H_1$)] $F(0,0)=0$,
  	\item[($H_2$)]$ F\in \mathcal{C}^2(\mathbb{R}\times\mathbb{R}),$
  	\item[($H_3$)]there exists $K>0:\ |F(s,p)-F(s^\prime,p^\prime)|\leq K\left[|s-s^\prime|+|p-p^\prime|\right],\ \ \forall s,s^\prime,p,p^\prime\in \mathbb{R}$, i.e. $F$ is a globally Lipschitz function, 
  	\item[($H_4$)]$F(s,p)=F_1(s,p)s+F_2(s,p)p$, for every $(s,p)\in \mathbb{R}^2$,
  	\item[($H_5$)] there exists $\dis M_1>0:\ |F_1(s,p)|+\frac{1}{\beta(x)}|F_2(s,p)|\leq M_1,\ \forall(s,p)\in \mathbb{R}^2$,
  	\item[($H_6$)]$\dis \exists M_2>0:\ \sum_{i=1}^{2}|D_iF(s,p)|+\sum_{i,j=1}^{2}|D_{ij}^2F(s,p)|\leq M_2,\ \forall (s,p)\in \mathbb{R}^2$,
  \end{itemize}
where $D_iF$ and $D_{ij}^2F$ are respectively the first and second order partial derivatives of $F$.
  \end{assumption}
 
 \begin{remark}\label{rmq}$ $
 \begin{enumerate}[(a)]
 \item From \eqref{k}, it follows that the function $x\mapsto\dis \frac{x^2}{a(x)}$ is non-decreasing on $(0,1]$.
 
 \item 	Using the point \textit{(a)} and the assumption \eqref{b} on $\beta(x)$, we observe that
 \begin{equation}\label{bet}
 \frac{\beta^2(x)}{a(x)}	\leq C\frac{x^2}{a(x)}\leq \frac{C}{a(1)},\;\;\;\; \text{for a.e}\;\; x\in (0,1)\ \mbox{where},\ C>0.
 \end{equation}
 Then, we deduce that for almost every $x\in (0,1)$,
 \begin{equation}\label{b1}
 |\beta(x)|\leq L\sqrt{a(x)},\ \mbox{where}\ L>0.
 \end{equation}
 These results will be useful in order to prove the existence of the solution for the linearized problem \eqref{ywell}.
 \end{enumerate}	
 
 \end{remark}
   
\begin{remark}$ $

	The system \eqref{eq} can models the dispersion of a gene in a given population. In this case, $x$ represents the gene type and $y:=y(t,x)$ is the distribution of individuals at time $t$ and of gene type $x$ of the population. Genetically speaking, such a property of degeneracy is natural since it means that if each population is not of gene type, it cannot be transmitted to its offspring \cite{younes2014}.
\end{remark}

\par

In this paper, we apply the Stackelberg-Nash strategy introduced in \cite{Von1934Stackelberg} to the system \eqref{eq}. 
It has been applied for the first time in the context of the control of evolution equations by J. L. Lions in \cite{Lions1994Stackelberg1, Lions1994Stackelberg2}. In the last years, other authors have used hierarchical control in the sense of Lions, see for instance \cite{mercan1, mercan2, djomegnebackward, teresa2018, liliana2020, romario2018, djomegne2018, djomegnelinear}. 

 In  \cite{araruna2015anash}, the authors developed the first hierarchical results in the context of exact controllability framework for a class of parabolic equations (linear and semi-linear). These results were improved in \cite{araruna2017} by imposing some weak conditions on observations domains for the followers. This same idea were also applied for the wave equation in \cite{ararunawave2018}. The authors in \cite{araruna2019boundary} dealt with the Stackelberg-Nash exact controllability of parabolic equations with distributed and boundary controls. In the context of hierarchical strategy for coupled systems, K\'er\'e et {\it al.} \cite{kere2017coupled}, considered a bi-objective control strategy for a coupled parabolic equations with a finite constraints  on one of the states. The authors in \cite{teresacoupled} and \cite{teresa} studied a Stackelberg-Nash strategy for a cascade system of parabolic equations. 
 
 More recently in \cite{danynina2021}, J. Limaco et {\it al.} applied the Stackelberg-Nash strategy to a coupled  quasi-linear parabolic system with controls acting in the interior on the domain. N. Carre\~{n}o and M. C. Santos in \cite{santos2019} applied the Stackelberg-Nash strategy to the Kuramoto-Sivashinsky equation with a distributed leader, and two followers. In \cite{djomegne2021}, the author studied the Stackelberg-Nash strategy for a non linear parabolic equation in an unbounded domain. In \cite{dany2021}, D. N. Huaman applied the Stackelberg-Nash strategy to control a quasi-linear parabolic equations in dimensions $1$, $2$ or $3$. Recently, in \cite{omar2021}, the authors applied the hierarchical control to the anisotropic heat equation with dynamic boundary conditions and drift terms. 

In all the above cited works, the hierarchic strategy were applied to non degenerate systems. To the best of our knowledge, the only work dealing with hierarchical strategy applied to a degenerate equation is the one in \cite{araruna2018stackelberg}, were the authors studied the Stackelberg-Nash strategy for both linear and semilinear degenerate parabolic equations. Here, the non linearities involve the first derivative. This inclusion of a first-order term in the expression of $F$ in \eqref{eq} has not been addressed before to our best knowledge. 
  
In this paper we are interested in the Stackelberg-Nash null controllability strategy for the degenerate problem with a gradient term \eqref{eq}. To be more specific, for $i=1,2$, we introduce the non-empty open sets $\omega_{i,d}\subset \Omega$, representing the observation domains of the followers, and the fixed target functions $y_{i,d}\in L^2((0,T); \omega_{i,d})$. Let us define the following cost functional
  \begin{equation}\label{all16}
  J_i(h;v^1,v^2)=\frac{\alpha_i}{2}\int_0^T\int_{\omega_{i,d}}|y-y_{i,d}|^2\ dxdt+\frac{\mu_i}{2}\int_0^T\int_{\omega_i}|v^i|^2\ dxdt,\ i=1,2,
  \end{equation}
  where  $\alpha_i$ and $\mu_i$ are two positive constants.

   We aim to choose the controls $v^i$ and $h$ in order to achieve two different objectives:
   \begin{itemize}
   	\item The main goal is to choose $h$ such that the following null controllability objective holds:
   	\begin{equation}\label{obj1}
   	y(T,\cdot;h;v^1,v^2)=0\ \mbox{in}\ \Omega.
   	\end{equation}
   	\item The second goal is the following: given the functions $y_{i,d}$ and $h$, we want to choose the control $v^i$  minimizing $J_i$ given by \eqref{all16}. This means that, throughout the interval $(0,T)$,  the control $v^i$ will be chosen such that:
   	\begin{equation}\label{obj2}
   	\begin{array}{rll}
   	&&\mbox{the solution}\ y(t,x;h;v^1,v^2)\ \mbox{of}\ \eqref{eq}\ \mbox{remains "not too far" from a desired target}\  y_{i,d}(t,x) \\ 
   	&&\mbox{in the observability domain}\  \omega_{i,d},\ i=1,2.
   	\end{array}
   	\end{equation}
   \end{itemize} 
Our goal is to prove that, for any initial data $y^0\in L^2(\Omega)$, there exist a control $h\in L^2(\omega_T)$ (called leader) and an associated Nash equilibrium $(\hat{v}^1,\hat{v}^2)^{t}=(\hat{v}^1(h),\hat{v}^2(h))^{t}\in \mathcal{H}=L^2((0,T);L^2(\omega_1))\times L^2((0,T);L^2(\omega_2))$ (called followers) such that the associated state $y$ of system \eqref{eq} satisfies \eqref{obj1}. To do this, we follow the Stackelberg-Nash strategy described as follows:

\begin{enumerate}
\item For each choice of the leader $h$, we look for a Nash equilibrium pair for the costs $J_i,\ i=1,2$ given by \eqref{all16}. That is, find the controls $(\hat{v}^1,\hat{v}^2)^{t}=(\hat{v}^1(h),\hat{v}^2(h))^{t}\in \mathcal{H}$ satisfying 
 \begin{equation*}\label{Nash}
	\begin{array}{rllll}
	\dis J_1(h;\hat{v}^1,\hat{v}^2)\leq J_1(h;v^1,\hat{v}^2),\ \ \ \forall v^1\in L^2((0,T);L^2(\omega_1)),\\
		\dis J_2(h;\hat{v}^1,\hat{v}^2)\leq J_2(h;\hat{v}^1,v^2),\ \ \ \forall v^2\in L^2((0,T);L^2(\omega_2)),
	\end{array}
\end{equation*}
or equivalently
 \begin{equation}\label{Nash1}
	\begin{array}{rllll}
		\dis	J_1(h;\hat{v}^1,\hat{v}^2)=\min_{v^1\in L^2((0,T);L^2(\omega_1))}J_1(h;v^1,\hat{v}^2),\\
		\dis	J_2(h;\hat{v}^1,\hat{v}^2)=\min_{v^2\in L^2((0,T);L^2(\omega_2))}J_2(h;\hat{v}^1,v^2).
	\end{array}
\end{equation}

\item Once the Nash equilibrium has been identified and fixed for each $h$, we look for a control $\bar{h}$ such that
\begin{equation}\label{mainobj}
	y(T,\cdot;\bar{h};\hat{v}^1(\bar{h}),\hat{v}^2(\bar{h}))=0\ \mbox{in}\ \Omega.
\end{equation}
		
\end{enumerate}

\begin{remark}
	$ $
	\begin{enumerate}[(a)]
	\item In the linear case, the functionals $J_i,\ i=1,2$ are differentiable and convex and the pair $(\hat{v}^1,\hat{v}^2)$ is a Nash equilibrium for $(J_1,J_2)$ if and only if
	\begin{equation}\label{Nashprime}
	\begin{array}{rllll}
	\dis	\frac{\partial J_1}{\partial v^1}(h;\hat{v}^1,\hat{v}^2)(v^1,0)=0,\ \ \forall v^1\in L^2((0,T);L^2(\omega_1)),\  \ \hat{v}^i\in L^2((0,T);L^2(\omega_i)),\\
	\dis	\frac{\partial J_2}{\partial v^2}(h;\hat{v}^1,\hat{v}^2)(0,v^2)=0,\ \ \forall v^2\in L^2((0,T);L^2(\omega_2)),\  \ \hat{v}^i\in L^2((0,T);L^2(\omega_i)).
	\end{array}
	\end{equation}
	\item 	In the semi-linear framework, the corresponding functionals $J_1$ and $J_2$ are not convex in general. For this reason, we must consider the following weaker definition of Nash equilibrium.	
	
	\end{enumerate}
	
\end{remark}

 \begin{dfnt}\label{dfntquasi}$ $
 	
 	Let the leader control $h$ be given. The pair ($\hat{v}^1,\hat{v}^2)$ is called a Nash quasi-equilibrium of functionals $(J_1,J_2)$ if the condition \eqref{Nashprime} is satisfied.		
 \end{dfnt}

In the case where only a leader control is exerted on $\omega$, i.e. $v^1\equiv v^2$, the null controllability of the following degenerate diffusion equation, has been already investigated
\begin{eqnarray}\label{}
	\dis W_{t}-\left(x^\alpha W_{x}\right)_{x}+k(t,x)W=h(t,x)\chi_{\omega},\ (t,x)\in Q,\label{eqa}\\
		\dis W(t,0)=0\ \mbox{if}\ 0<\alpha<1,\label{eqb}\\
		\dis  (x^\alpha W)(t,0)=0\ \mbox{if}\ \alpha\geq 1,\ t\in(0,T), \label{tea}\\
		\dis W(t,1)=0,\ \ t\in (0,T),\label{eqc}\\
		\dis  W(0,\cdot)=W_0(x),\ x\in \Omega,\label{eqd}	
\end{eqnarray}
where $\alpha>0$ and $k\in L^\infty(Q)$. It was shown that the system \eqref{eqa}-\eqref{eqd} is null controllable if $ 0<\alpha<2$ \cite{cannarsa2005, cannarsa2008, alabau2006}, while not null controllable if $\alpha\geq 2$ \cite{cannarsa2004}. Note that if $ 0<\alpha<1$, the degeneracy of \eqref{eqa} is weak, while strong if $\alpha\geq 1$. In \cite{carmelo2010}, Flores and Teresa considered the degenerate convection-diffusion equation
\begin{equation}\label{te}
y_t-(x^\alpha y_x)_x+x^{\alpha/2}b(t,x)y_x+k(t,x)y=h(t,x)\chi_{\omega},\ (t,x)\in Q,	
\end{equation}
with $b\in L^\infty(Q)$ and proved that the system \eqref{te} and \eqref{eqb}-\eqref{eqd} is null controllable for $ 0<\alpha<2$. In \eqref{te}, the gradient term depends on $\alpha$ and we said that the convection term can be controlled by the diffusion term. In \cite{carmelo2020}, the authors presented a null controllability result for the system \eqref{eq} and \eqref{tea}. The main result has been obtained after the proof of a new Carleman inequality for a degenerate linear parabolic equation with first order terms. F. Xu et \textit{et al.} in \cite{xu2020} proved the null controllability of the semilinear degenerate system \eqref{eq} and \eqref{tea} with $a(x)=x^\alpha,\ 0<\alpha<2$. The authors in \cite{wang2014}, studied the degenerate convection-diffusion equation with the convection term
independent of $\alpha$
\begin{equation}\label{teb}
	y_t-(x^\alpha y_x)_x+(b(t,x)y)_x+k(t,x)y=h(t,x)\chi_{\omega},\ (t,x)\in Q,	
\end{equation}
and proved that the system \eqref{teb} and \eqref{eqb}-\eqref{eqd} is null controllable for $ 0<\alpha<1/2$. The authors in \cite{wang2018}  considered  the degenerate equation
\begin{equation}\label{tec}
	y_t-(x^\alpha y_x)_x+b(t,x)y_x+k(t,x)y=h(t,x)\chi_{\omega},\ (t,x)\in Q,	
\end{equation}
and showed that the system \eqref{tec} and \eqref{eqb}-\eqref{eqd} is null controllable for $ 0<\alpha<1$ if $b$, $b_x$, $b_{xx}$, $b_t\in L^\infty(Q)$, i.e. $b\in W^{2,1}_{\infty}(Q)$.

\subsection{Main results}
The next result establish the equivalence between the Nash quasi-equilibrium and the Nash equilibrium under some additional conditions.
More precisely, we have the following result.

 \begin{prop}\label{nash}$ $
 	
	Let us assume that $a(\cdot)$ satisfies \eqref{k}, $y_{i,d}\in L^\infty((0,T); \omega_{i,d})$ and Assumption \ref{assum} are satisfied. Assume that $h\in L^2(\omega_T)$ and $\mu_i,\ i=1,2$ are sufficiently large. 
	Then, if $(\hat{v}^1,\hat{v}^2)$ is a Nash quasi-equilibrium for $J_i,\ i=1,2$, there exists a constant $C>0$ independent of $\mu_i,\ i=1,2$ such that 
	\begin{equation}
		\dis D_i^2J_i(h;\hat{v}^1,\hat{v}^2)\cdot(w^i,w^i)\geq C \|w^i\|^2_{L^2((0,T);L^2(\omega_i))},\ \forall w^i\in L^2((0,T);L^2(\omega_i)),\ i=1,2.
	\end{equation}
	In particular, the functionals $(J_1,J_2)$ are convex in $(\hat{v}^1, \hat{v}^2)$ and therefore the pair $(\hat{v}^1, \hat{v}^2)$ is a Nash equilibrium for $J_i,\ i=1,2$ of the system \eqref{eq}.
	
\end{prop}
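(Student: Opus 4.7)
The plan is to differentiate $J_i$ twice with respect to $v^i$ at the Nash quasi-equilibrium and show that the resulting quadratic form is coercive in $w^i$ with a constant independent of $\mu_i$, provided $\mu_i$ is above a threshold. A direct computation yields
\begin{equation*}
D_i^2 J_i(h;\hat v^1,\hat v^2)\cdot(w^i,w^i)=\alpha_i\!\int_0^T\!\!\int_{\omega_{i,d}}\!|z^i|^2\,dxdt+\alpha_i\!\int_0^T\!\!\int_{\omega_{i,d}}\!(y-y_{i,d})\,z^{ii}\,dxdt+\mu_i\|w^i\|^2_{L^2(\omega_{i,T})},
\end{equation*}
where $y=y(h;\hat v^1,\hat v^2)$, $z^i$ is the first-order sensitivity (solution of the linearization of \eqref{eq} at $y$, with coefficients $D_1F(y,y_x)$, $D_2F(y,y_x)$ and source $w^i\chi_{\omega_i}$, zero Cauchy--Dirichlet data), and $z^{ii}$ is the second-order sensitivity (same linearized operator, source $-D^2 F(y,y_x)\cdot(z^i,z^i_x)^{\otimes 2}$). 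The first term is nonnegative and the third is the desired coercive contribution, so the whole task reduces to dominating the mixed term by $C\|w^i\|^2$ with $C$ independent of $\mu_i$.

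\textbf{Sensitivity estimates.} Thanks to $(H_5)$ combined with the pointwise bound $|\beta(x)|\leq L\sqrt{a(x)}$ from Remark \ref{rmq}, the coefficients of the linearized equation satisfy $|D_1F(y,y_x)|\leq M_1$ and $|D_2F(y,y_x)|\leq LM_1\sqrt{a(x)}$. A standard weighted energy estimate then gives
\begin{equation*}
\|z^i\|_{L^\infty(0,T;L^2(\Omega))}+\|\sqrt{a}\,z^i_x\|_{L^2(Q)}\leq C\|w^i\|_{L^2(\omega_{i,T})},
\end{equation*}
with $C=C(T,M_1,L)$. Since $(H_6)$ controls $D^2F$ uniformly and \eqref{k} is weakly degenerate ($\tau<1$), an analogous energy estimate applied to the $z^{ii}$-equation, in which the right-hand side is bounded by $M_2(|z^i|+|z^i_x|)^2$, produces $\|z^{ii}\|_{L^2(Q)}\leq C\|w^i\|^2_{L^2(\omega_{i,T})}$ uniformly in $(\hat v^1,\hat v^2)$.

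\textbf{A uniform bound on $y-y_{i,d}$.} Writing the Euler--Lagrange equations \eqref{Nashprime} in adjoint form, one has $\mu_i\hat v^i=-\alpha_i p^i|_{\omega_i}$, where $p^i$ solves the adjoint degenerate parabolic equation with source $(y-y_{i,d})\chi_{\omega_{i,d}}$. Energy estimates for the adjoint give $\|p^i\|_{L^2}\leq C\|y-y_{i,d}\|_{L^2(Q)}$, hence $\|\hat v^i\|_{L^2(\omega_{i,T})}\leq (C\alpha_i/\mu_i)\|y-y_{i,d}\|_{L^2(Q)}$. Substituting this into the a priori bound for $y$ solving \eqref{eq} (which holds since $F$ is globally Lipschitz by $(H_3)$) yields
\begin{equation*}
\|y\|_{L^2(Q)}\leq C\bigl(\|y^0\|_{L^2(\Omega)}+\|h\|_{L^2(\omega_T)}\bigr)+\frac{C'}{\mu_i}\|y-y_{i,d}\|_{L^2(Q)};
\end{equation*}
since $y_{i,d}\in L^\infty$, for $\mu_i\geq 2C'$ the last term absorbs into the left-hand side and one obtains $\|y-y_{i,d}\|_{L^2(Q)}\leq K$ with $K$ independent of $\mu_i$.

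\textbf{Conclusion and main obstacle.} Combining the preceding estimates, the mixed term is bounded by $\alpha_i K\|z^{ii}\|_{L^2(Q)}\leq \widetilde C\|w^i\|^2_{L^2(\omega_{i,T})}$ with $\widetilde C$ independent of $\mu_i$. Hence, for $\mu_i\geq 2\widetilde C$,
\begin{equation*}
D_i^2 J_i(h;\hat v^1,\hat v^2)\cdot(w^i,w^i)\geq(\mu_i-\widetilde C)\|w^i\|^2_{L^2(\omega_{i,T})}\geq \widetilde C\,\|w^i\|^2_{L^2(\omega_{i,T})},
\end{equation*}
which is the desired coercivity with constant $C=\widetilde C$ independent of $\mu_i$. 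Strict convexity of $J_i$ in $v^i$ at $(\hat v^1,\hat v^2)$ follows, and the first-order condition \eqref{Nashprime} then upgrades the quasi-equilibrium to a genuine Nash equilibrium. The main technical obstacle is the energy estimate on $z^{ii}$ in the degenerate framework: the source contains a $(z^i_x)^2$ contribution which is not manifestly in $L^1(Q)$ when $a$ vanishes at $x=0$, and closing the estimate requires exploiting the weak degeneracy together with the structural bound $|\beta|\leq L\sqrt{a}$ to route the control through the weighted gradient norm $\|\sqrt{a}\,z^i_x\|_{L^2(Q)}$.
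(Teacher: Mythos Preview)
Your overall strategy---compute $D_i^2J_i$, isolate the coercive $\mu_i\|w^i\|^2$ term, and bound the remainder by $C\|w^i\|^2$ with $C$ independent of $\mu_i$---is exactly the paper's. The organization, however, differs in a way that matters. The paper does \emph{not} introduce the second-order sensitivity $z^{ii}$. Instead it perturbs $\hat v^1\mapsto \hat v^1+sw^1$, writes the first derivative via the adjoint $p^s$ of the linearization at $y^s$, and passes to the limit $\eta=\lim_{s\to0}(p^s-p)/s$, $\phi=\lim_{s\to0}(y^s-y)/s$. This yields
\[
D_1^2J_1(h;\hat v^1,\hat v^2)\cdot(w^1,w^1)=\int_0^T\!\!\int_{\omega_1}\eta\,w^1\,dxdt+\mu_1\|w^1\|^2_{L^2(\omega_{1,T})},
\]
where $\eta$ solves a backward linear equation whose source is $\alpha_1\phi\chi_{\omega_{1,d}}-\mathcal G(p,\phi)$, and $\mathcal G$ collects the second derivatives of $F$ acting on $p$ and $\phi$, with the crucial $D_{22}^2F$-contribution appearing in \emph{divergence form}, namely $-(D_{22}^2F\,p\,\phi_x)_x$. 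The estimate $|\int\eta w^1|\le C\|w^1\|^2$ is then obtained (by reference to \cite{araruna2015anash}) via an energy bound on $\eta$, and the key ingredient that makes it close is $p\in L^\infty(Q)$, which is where the hypothesis $y_{i,d}\in L^\infty$ is actually used.

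There is a genuine gap in your argument at the $z^{ii}$ step. You claim an energy estimate gives $\|z^{ii}\|_{L^2(Q)}\le C\|w^i\|^2$, but the source of $z^{ii}$ contains $D_{22}^2F(y,y_x)(z^i_x)^2$, and under $(H_6)$ there is no $\beta$-structure on $D_{22}^2F$: it is merely bounded by $M_2$. In the degenerate space you only control $\|\sqrt{a}\,z^i_x\|_{L^2(Q)}$, so $(z^i_x)^2$ is not known to lie in $L^2(Q)$, nor even in $L^1(Q)$ without the weight $a$. Your proposed fix (``route the control through $\|\sqrt a\,z^i_x\|$ via $|\beta|\le L\sqrt a$'') cannot apply here because $\beta$ does not appear in that term. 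Consequently the bound on $\|z^{ii}\|_{L^2}$, and hence your Cauchy--Schwarz estimate $\alpha_i\|y-y_{i,d}\|_{L^2}\|z^{ii}\|_{L^2}$, is not justified.

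The paper's adjoint route avoids ever needing $\|z^{ii}\|_{L^2}$: by duality the mixed term becomes an integral involving $p$, $\phi$ and $\phi_x$ directly, and the divergence structure $-(D_{22}^2F\,p\,\phi_x)_x$ together with $p\in L^\infty$ is what one actually exploits. You also underuse the hypothesis $y_{i,d}\in L^\infty$: you invoke it only to get an $L^2$ bound on $y-y_{i,d}$, whereas its real role is to provide the $L^\infty$ regularity of the adjoint $p$ that makes the second-order estimate close. If you want to repair your argument, replace the attempt to bound $\|z^{ii}\|_{L^2}$ by the duality identity $\alpha_i\int_{\omega_{i,d}}(y-y_{i,d})z^{ii}=\int_Q p^i\cdot(\text{source of }z^{ii})$ and then argue with $p^i\in L^\infty(Q)$; this is equivalent to the paper's $\eta$-computation.
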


 To state the main result of this paper, we impose the following assumptions:
 \begin{equation}\label{od}
 	\dis \omega_{1,d}=\omega_{2,d}:=\omega_d
 \end{equation}
and 
 \begin{equation}\label{oda}
 	\dis \omega_d\cap\omega\neq\emptyset.
 \end{equation}

The main result of this paper is the following:

\begin{theorem}\label{theolinear}$ $
	
Let $T > 0$ and $y_0\in L^2(\Omega)$. Suppose that \eqref{oda}-\eqref{od} holds and $\mu_i,\ i=1,2$ are large enough, $a(\cdot)$ satisfies \eqref{k} and the Assumption \ref{assum} are satisfied.
Then,
	there exists a positive real weight function $\kappa=\kappa(t)$ blowing up at $t=T$ such that for any $y_{i,d}\in L^2((0,T); \omega_{i,d})$ satisfying
	\begin{equation}\label{nou}
		\int_{0}^{T}\int_{\omega_d}\kappa^{-2}|y_{i,d}|^2\ dxdt<+\infty,\ \ i=1,2,
	\end{equation}
	 there exist a control $\bar{h}\in L^2(\omega_T)$ and an associated Nash equilibrium $(\hat{v}^1, \hat{v}^2)\in \mathcal{H}=L^2((0,T);L^2(\omega_1))\times L^2((0,T);L^2(\omega_2))$ such that the corresponding solution to \eqref{eq} satisfies \eqref{mainobj}. 
\end{theorem}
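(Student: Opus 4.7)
The plan is to encode the Nash equilibrium into an optimality system of coupled adjoint equations, prove a global Carleman inequality for the resulting adjoint with a single observation in $\omega$, derive an observability inequality with an explicit weight $\kappa(t)$ blowing up at $t=T$, and conclude via a Fursikov--Imanuvilov penalization that delivers the leader $\bar h$ together with its associated Nash equilibrium.

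First, Proposition~\ref{nash} (for $\mu_i$ large) ensures that, for every leader $h$, the Nash quasi-equilibrium of Definition~\ref{dfntquasi} is a genuine Nash equilibrium, so \eqref{Nashprime} characterizes $(\hat v^1,\hat v^2)$. Writing out these optimality conditions yields, for $i=1,2$, backward degenerate adjoint equations
\begin{equation*}
\left\{\begin{array}{l}
-p^i_t-(a(x)p^i_x)_x+\text{zero- and first-order terms}=\alpha_i(y-y_{i,d})\chi_{\omega_{i,d}}\quad\text{in }Q,\\
p^i(T,\cdot)=0,\quad p^i(t,0)=p^i(t,1)=0,
\end{array}\right.
\end{equation*}
together with $\hat v^i=-\mu_i^{-1}p^i\chi_{\omega_i}$. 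Using $(H_4)$ to write $F(y,y_x)=F_1(y,y_x)\,y+F_2(y,y_x)\,y_x$, with $\|F_1\|_\infty$ bounded and $F_2^2/a\in L^\infty$ from $(H_5)$ and \eqref{b1}, substitution into \eqref{eq} yields a coupled system that is linear in $(y,p^1,p^2)$ with bounded zero- and first-order coefficients and driven solely by $h$; \eqref{mainobj} becomes a null-controllability question for this triple.

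Next, I would pass to the dual system by introducing $\varphi$ (backward, dual to $y$) and $\psi^i$ (forward, dual to $p^i$). Duality converts the controllability goal into an observability inequality of the form
\begin{equation*}
\|\varphi(0)\|_{L^2(\Omega)}^2\;\leq\;C\biggl[\int_0^T\!\!\!\int_\omega|\varphi|^2\,\dt+\sum_{i=1}^2\int_0^T\!\!\!\int_{\omega_d}\kappa^2|y_{i,d}|^2\,\dt\biggr],
\end{equation*}
whose data term is finite thanks to \eqref{nou}. The heart of the argument is a global Carleman estimate for the coupled $(\varphi,\psi^1,\psi^2)$ system with a single observation on $\omega$. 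Following \cite{carmelo2020}, I would choose weights $\sigma(t,x)=\theta(t)\bigl[e^{2\lambda\|\rho\|_\infty}-e^{\lambda(\rho(x)+2\|\rho\|_\infty)}\bigr]$ with $\theta(t)=[t(T-t)]^{-4}$ and a spatial profile $\rho$ adapted to the degeneracy at $x=0$, satisfying $\rho_x>0$ outside some $\omega'\Subset\omega\cap\omega_d$ (using \eqref{oda}). A Carleman estimate for the $\varphi$-equation absorbs the gradient perturbation through \eqref{b1} and Young's inequality; an analogous estimate for each $\psi^i$ (with natural observation on $\omega_d$) combined with a cut-off supported in $\omega_d\cap\omega$ allows me to trade local integrals of $\psi^i$ on $\omega_d$ for local integrals of $\varphi$ on $\omega$. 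Taking $\mu_i$ sufficiently large finally absorbs the residual $\psi^i$-terms, producing the desired single-observation Carleman inequality.

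Setting $\kappa(t):=e^{-s\sigma_\star(t)/2}$ for a suitable time-only majorant $\sigma_\star$ produces the weight announced in the statement, and the observability inequality is a standard consequence. Null controllability is then obtained by minimizing the penalized cost
\begin{equation*}
J_\varepsilon(h)=\frac{1}{2}\int_0^T\!\!\!\int_\omega|h|^2\,\dt+\frac{1}{2\varepsilon}\bigl\|y(T;h,\hat v^1(h),\hat v^2(h))\bigr\|_{L^2(\Omega)}^2,
\end{equation*}
which is continuous, strictly convex and coercive on $L^2(\omega_T)$; observability together with \eqref{nou} yields a uniform bound on $\{h_\varepsilon\}$ while forcing $y_\varepsilon(T)\to 0$, and a weak cluster point $\bar h$ satisfies \eqref{mainobj} with the associated Nash equilibrium $(\hat v^1(\bar h),\hat v^2(\bar h))$. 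The main obstacle is the Carleman step: three coupled degenerate parabolic equations with zero- and first-order terms must be estimated using only a local observation on $\omega$, which requires the refined weight choices of \cite{carmelo2020}, the smallness of $\beta^2/a$ from \eqref{bet}--\eqref{b1}, and the geometric condition \eqref{oda} to exchange the natural $\omega_d$-observations on the $\psi^i$ for $\omega$-observations on $\varphi$.
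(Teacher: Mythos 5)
Your outline matches the paper's strategy for the \emph{linearized} problem (optimality system for the Nash equilibrium, a joint Carleman estimate for the coupled adjoint system with a single observation in $\omega$ using \eqref{oda}, an observability inequality with a weight $\kappa$ that does not vanish at $t=0$, and a penalization scheme \`a la Fursikov--Imanuvilov). However, there is a genuine gap at the point where you write that, after using $(H_4)$ to express $F(y,y_x)=F_1(y,y_x)\,y+F_2(y,y_x)\,y_x$, ``substitution into \eqref{eq} yields a coupled system that is \emph{linear} in $(y,p^1,p^2)$.'' It does not: the coefficients $F_1(y,y_x)$, $F_2(y,y_x)$ in the state equation and $D_1F(y,y_x)$, $D_2F(y,y_x)$ in the adjoint equations \eqref{pop} depend on the unknown $y$ itself, so the optimality system \eqref{yop}--\eqref{pop} remains genuinely nonlinear. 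The duality argument (observability $\Rightarrow$ controllability) and the penalized functional $J_\varepsilon$ you invoke only produce a control for a linear system whose coefficients are \emph{fixed} $L^\infty$ functions; they cannot be applied directly to a system whose coefficients depend on the solution. Your proof therefore establishes, at best, the analogue of Proposition \ref{linear} (null controllability of \eqref{yli}--\eqref{pli} with frozen coefficients $a_1,a_2,b_1,b_2$), not Theorem \ref{theolinear}.

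The missing ingredient is the fixed-point step. The paper freezes the coefficients at an arbitrary $z\in Z=L^2((0,T);H^1_a(\Omega))$ via \eqref{abc}, notes that $(H_5)$--$(H_6)$ give bounds on $\|a_1^z\|_\infty,\dots,\|b_2^z\|_\infty$ that are \emph{uniform in $z$} (so the controllability constant in \eqref{mon10theoa} and the a priori bound \eqref{y} are uniform), and then applies Kakutani's fixed point theorem to the multivalued map $z\mapsto\Lambda(z)$ collecting the controlled trajectories; the compactness needed for Kakutani comes from the compact embedding $H^1_a(\Omega)\hookrightarrow L^2(\Omega)$ of Remark \ref{remwta}. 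Without this step (or an equivalent one, e.g.\ Schauder on a suitable linearization map), your argument does not close: the control $\bar h$ obtained from the penalization drives to zero the solution of a linearized system, not the solution of \eqref{yop}--\eqref{pop}. A secondary, less serious remark: the paper exploits the hypothesis $\omega_{1,d}=\omega_{2,d}=\omega_d$ of \eqref{od} to collapse the two forward adjoint equations into one via $\varrho=\alpha_1\psi^1+\alpha_2\psi^2$, which materially simplifies the Carleman step you describe for the full triple; if you keep three equations you should verify that the cut-off/absorption argument still works with two distinct observation sets $\omega_{i,d}$, which is precisely why the paper imposes \eqref{od}.
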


 The rest of the paper is organized as follows. In Section \ref{well}, we introduce some weighted function spaces needed to establish some well posedness results. Section \ref{low} deals with the proof of existence and characterization of Nash-quasi equilibrium. Under some conditions, we prove equivalence between Nash equilibrium and Nash-quasi equilibrium in Section \ref{conv}. In Section \ref{Carleman}, we prove some suitable Carleman inequalities and we deduce the null controllability result in Section \ref{null}. A conclusion is given in Section \ref{conclusion}.

 \section{Well-posedness result}\label{well}
 
 Throughout this paper, the usual norm in $L^\infty(Q)$ will be denoted by $\|\cdot\|_{\infty}$.
 In this section, we consider the following linear problem
 \begin{equation}\label{ywell}
 	\left\{
 	\begin{array}{rllll}
 		\dis y_{t}-\left(a(x)y_{x}\right)_{x}+a_0y+\beta(x)b_0y_x  &=&\dis h\chi_{\omega}+v^1\chi_{\omega_1}+v^2\chi_{\omega_2}& \mbox{in}& Q,\\
 		\dis y(t,0)=y(t,1)&=&0& \mbox{on}& (0,T), \\
 		\dis y(0,\cdot)&=&y^0&\mbox{in}&\Omega,
 	\end{array}
 	\right.
 \end{equation}
 where $a_0,b_0\in L^\infty(Q)$, $y^0\in L^2(\Omega)$ and $h\chi_{\omega}, v^1\chi_{\omega_1}, v^2\chi_{\omega_2}\in L^2(Q)$.
 
 The following assumption will help us to prove the existence result of problem \eqref{ywell}.
 \begin{assumption}$ $\label{aspt}
 	
 Let the constant $L$ be defined by \eqref{b1}. Then there exists a constant $\dis \alpha>0$ such that
 	$$
 	a_0(t,x)\geq \alpha\ \mbox{for all}\ (t,x)\in Q \ \mbox{and}\ \dis \alpha>\frac{1}{2}L^2\|b_0\|^2_{\infty}.
 	$$ 	
 \end{assumption}

 In order to prove the well-posedness of the degenerate system \eqref{ywell}, we introduce the following weighted Hilbert spaces (in the sequel, abs. cont. means absolutely continuous):
  
  \begin{equation*}
  	 H^1_{a}(\Omega)=\{u\in L^2(\Omega): u\ \mbox{is abs. cont. in}\ [0,1]: \sqrt{a}u_x\in L^2(\Omega),\ u(0)=u(1)=0\},
  \end{equation*}
 endowed with the norm  
  $$
  \|u\|^2_{H^1_{a}(\Omega)}=\|u\|^2_{L^2(\Omega)}+\|\sqrt{a}u_x\|^2_{L^2(\Omega)},\ \ \ u\in H^1_{a}(\Omega)
  $$
  and
 \begin{equation*}\label{}
 \dis H^2_{a}(\Omega)=\{u\in H^1_{a}(\Omega): a(x)u_x\in H^1(\Omega)\},
 \end{equation*}
endowed with the norm
 \begin{equation*}\label{}
  \|u\|^2_{H^2_{a}(\Omega)}=\|u\|^2_{H^1_{a}(\Omega)}+\|(a(x)u_x)_x\|^2_{L^2(\Omega)}.
 \end{equation*}

Next, we set
$$
\mathbb{H}:=L^2((0,T);H^1_{a}(\Omega))\cap\mathcal{C}([0,T];L^2(\Omega)).
$$
Let $(H^{1}_a(\Omega))'$ be the topological dual space of $H^1_a(\Omega)$.
If we set
\begin{equation}\label{defWTA}
	W_a(0,T)= \left\{\rho \in L^2((0,T);H^1_a(\Omega)); \rho_t\in L^2\left((0,T);(H^{1}_a(\Omega))^\prime\right)\right\}.
\end{equation}
Then $W_a(0,T)$ endowed with the norm 
\begin{equation}\label{}
	\|\rho\|^2_{	W_a(0,T)}=\|\rho\|^2_{L^2((0,T);H^1_a(\Omega))}+\|\rho_t\|^2_{L^2\left((0,T);(H^{1}_a(\Omega))^\prime\right)}
\end{equation}
is a Hilbert space. Moreover, we have the continuous embedding
\begin{equation}\label{contWTA}
	W_a(0,T)\subset C([0,T],L^2(\Omega)). 
\end{equation}

\begin{remark}\label{remwta}$ $
	
	Under the hypothesis \eqref{k},
$H^1_a(\Omega)$ is compactly embedded in $L^2(\Omega)$ (see \cite{alabau2006}).
\end{remark}
Let us recall the following existence result retrieved from \cite[Page 37]{lions2013}.
\begin{theorem}\label{Theolions61} 
	Let $\left(F, \|\cdot\|_F\right)$ be a Hilbert space. Let $\Phi$ be a subspace of $F$ endowed with a pre-Hilbert scalar product $(((\cdot,\cdot)))$ and the corresponding norm $|||\cdot|||$.  Moreover, let $E:F\times \Phi\to \mathbb{C}$ be a sesquilinear form.  Assume that the following hypothesis hold:
	\begin{enumerate}
		\item  The embedding $\Phi \hookrightarrow F$ is continuous, that is, there is a constant $C_1>0$ such that
		\begin{equation}\label{theolions1}
			\|\varphi\|_{F}\leq C_1|||\varphi|||~~\forall\; \varphi~ in~ \Phi.
		\end{equation}
		
		\item For all $\varphi\in \Phi$, the mapping $u\mapsto E(u,\varphi)$ is continuous on $F$.
		
		\item There is a constant $C_2>0$  such that
		\begin{equation}\label{theolions2}
			{E(\varphi,\varphi)}\geq C_2 |||\varphi|||^2~~~\text{for all}~~\varphi\in \Phi.
		\end{equation}
	\end{enumerate}
	If $\varphi\mapsto L(\varphi)$ is a semi linear continuous form  on $\Phi$, then there exists a function $u\in F$ verifying
	$$
	E(u,\varphi)=L(\varphi)~~ \text{for all} ~~\varphi\in \Phi.
	$$
\end{theorem}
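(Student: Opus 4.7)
The plan is to reduce the problem to the standard Hahn--Banach/Riesz machinery by viewing the sesquilinear form $E$ as defining a map $\Lambda:\Phi\to F^*$, where $\Lambda(\varphi):=E(\cdot,\varphi)$ is well defined in $F^*$ by hypothesis (2). If I can define a continuous form $\ell$ on the range $M:=\Lambda(\Phi)\subset F^*$ that encodes $L$, extend it to $F^*$, and then use the Riesz identification $F^*\cong F$, I will recover the desired $u\in F$.

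The crucial estimate is a quasi-coercivity statement on $M$. For any $\varphi\in\Phi$, coercivity (3), the duality pairing on $F$, and the continuous embedding (1) chain together:
\begin{equation*}
C_2|||\varphi|||^2 \;\leq\; E(\varphi,\varphi) \;=\; \Lambda(\varphi)(\varphi) \;\leq\; \|\Lambda(\varphi)\|_{F^*}\,\|\varphi\|_F \;\leq\; C_1\,\|\Lambda(\varphi)\|_{F^*}\,|||\varphi|||.
\end{equation*}
Dividing by $|||\varphi|||$ (the case $\varphi=0$ being trivial) yields $|||\varphi|||\leq (C_1/C_2)\,\|\Lambda(\varphi)\|_{F^*}$. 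This shows in one stroke that $\Lambda$ is injective, so the rule $\ell(\Lambda(\varphi)):=L(\varphi)$ unambiguously defines a form on the linear subspace $M\subset F^*$; and since $L$ is continuous on $(\Phi,|||\cdot|||)$, the same estimate gives
\begin{equation*}
|\ell(\Lambda(\varphi))| \;=\; |L(\varphi)| \;\leq\; \|L\|_{\Phi^*}|||\varphi||| \;\leq\; \frac{C_1\|L\|_{\Phi^*}}{C_2}\,\|\Lambda(\varphi)\|_{F^*},
\end{equation*}
so $\ell$ is continuous on $M$ with respect to the norm inherited from $F^*$.

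Next, I would apply the Hahn--Banach theorem to extend $\ell$ to a continuous form $\widetilde{\ell}$ on all of $F^*$ preserving its norm. Since $F$ is a Hilbert space, the Riesz representation theorem identifies $F^{**}$ with $F$: every continuous form on $F^*$ of the right (anti)linearity type is realized as evaluation against some fixed element of $F$. Concretely, there exists $u\in F$ such that $\widetilde{\ell}(f)=f(u)$ for every $f\in F^*$. Restricting to $f=\Lambda(\varphi)$ with $\varphi\in\Phi$ then gives $E(u,\varphi)=\Lambda(\varphi)(u)=\widetilde{\ell}(\Lambda(\varphi))=\ell(\Lambda(\varphi))=L(\varphi)$, which is the required identity.

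The main subtlety, rather than a deep obstacle, is bookkeeping with the complex sesquilinear conventions: $\Lambda$ is conjugate-linear when $E$ is antilinear in its second argument, and $L$ is of matching type, so consistency of $\ell$ and of the Riesz identification has to be checked. Note also that although $\Phi$ is only a \emph{pre}-Hilbert space, its incompleteness is never used: all completeness enters through $F^*$, which is automatically Banach, allowing the Hahn--Banach extension to be performed and the proof to conclude.
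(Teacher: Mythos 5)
Your argument is correct. Note, however, that the paper does not prove this statement at all: Theorem \ref{Theolions61} is quoted verbatim from the cited reference (Lions, \cite[p.~37]{lions2013}) and used as a black box, so there is no in-paper proof to compare against. What you have written is essentially the classical proof of Lions' projection theorem, with one cosmetic difference from the textbook version: Lions first applies Riesz representation to each functional $u\mapsto E(u,\varphi)$ to obtain a map $\tau:\Phi\to F$ with $E(u,\varphi)=(u,\tau(\varphi))_F$, derives the lower bound $|||\varphi|||\leq (C_1/C_2)\,\|\tau(\varphi)\|_F$, and then extends the functional $\tau(\varphi)\mapsto L(\varphi)$ from $\tau(\Phi)$ to $F$ (by Hahn--Banach, or simply by orthogonal projection onto $\overline{\tau(\Phi)}$ since $F$ is Hilbert) before applying Riesz once more. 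You instead keep everything in $F^*$ and invoke reflexivity at the end; the two routes are equivalent via the Riesz isometry $F^*\cong F$. Your key estimate $C_2|||\varphi|||^2\leq E(\varphi,\varphi)=\Lambda(\varphi)(\varphi)\leq C_1\|\Lambda(\varphi)\|_{F^*}\,|||\varphi|||$ is exactly the heart of the standard argument, and it correctly delivers both the injectivity of $\Lambda$ (hence well-definedness of $\ell$) and the continuity of $\ell$ on $M$ in the $F^*$-norm. The sesquilinearity bookkeeping you flag does work out: with $E$ linear in its first argument and antilinear in the second, $\Lambda$ is antilinear and $L$ is antilinear, so $\ell$ is a genuinely linear continuous form on $M\subset F^*$, and reflexivity of the Hilbert space $F$ realizes its Hahn--Banach extension as evaluation at some $u\in F$. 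The only simplification available is that, since $F^*$ is itself a Hilbert space, Hahn--Banach can be replaced by extension-by-zero on the orthogonal complement of $\overline{M}$; but this changes nothing of substance.
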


The weak solution of problem \eqref{ywell} is defined as follows.

\begin{dfnt}\label{weaksolution}
	We shall say that a function $y\in \mathbb{H}$ is a weak solution to \eqref{ywell} if the following equality
	\begin{equation}\label{Defweaksolution}
		\begin{array}{lll}
			\dis -\int_Q y\phi_t \, \dq + \int_Q a(x)y_x\phi_x\,\dq+\int_{Q}a_0 y\phi\, \dq+\int_{Q}\beta(x)b_0 y_x\phi\, \dq\\
			=\dis \int_{Q} (h\chi_{\omega}+v^1\chi_{\omega_1}+v^2\chi_{\omega_2})\, \phi\; \dq
			+\int_{\Omega} y^0(x)\,\phi(0,x)\ dx,
		\end{array}
	\end{equation}
	holds, for every 
	\begin{equation}\label{HQ}
		\phi \in \mathbb{V}=\left\{\phi\in \mathbb{H}:\, \phi_t\in L^2(Q), \, \phi(T,\cdot)=0 \hbox{ in } \Omega \right\}.
	\end{equation}
\end{dfnt}

\begin{remark}\label{rmktrace}
	We observe the following:
	\begin{enumerate}[(a)]
		\item The space $\mathbb{V}$ endowed  with the norm defined by
		\begin{align*}
			\|\phi\|^2_{\mathbb{V}}:=\|\phi\|^2_{L^2((0,T);H^1_a(\Omega))}+\|\phi(0,\cdot)\|^2_{L^2(\Omega)}
		\end{align*}
		is a Hilbert space.
		\item If $ \varphi \in \mathbb{V}$, then $\phi_t \in L^2(Q)\hookrightarrow L^2((0,T);(H^{1}_a(\Omega))^\prime)$. Consequently, $\phi \in W_a(0,T)$. Therefore,  $\phi(0,\cdot)$ and $\phi(T,\cdot)$ exist and belong to $L^2(\Omega)$.
	\end{enumerate}
\end{remark}

Using Theorem \ref{Theolions61}, the following proposition shows that the problem \eqref{ywell} is well posed.
 
\begin{prop}\label{exis}$ $
	
	Assume that $a(\cdot)$ satisfies \eqref{k} and Assumptions \ref{assum} are valid. Let $y^0\in L^2(\Omega)$, $a_0,b_0\in L^\infty(Q)$, $h\in L^2(\omega_T)$ and   $(v^1, v^2)^{t}\in \mathcal{H}=L^2((0,T);L^2(\omega_1))\times L^2((0,T);L^2(\omega_2))$. Then, the system \eqref{ywell} admits a unique weak solution $y\in\mathbb{H}$ in the sense of Definition \ref{weaksolution}.
	Moreover, there exists a constant $C=C(T,\|a_0\|_{\infty}, \|b_0\|_{\infty})>0$ such that the following estimation holds:
	\begin{equation}\label{esty1y2}
		\begin{array}{llllll}
			\dis \|y(T,\cdot)\|^2_{L^2(\Omega)}+\|y\|^2_{L^2((0,T); H^1_a(\Omega))}
			\leq 
			C\left(\|v^1\|^2_{L^2(\omega_{1,T})}+\|v^2\|^2_{L^2(\omega_{2,T})}+\|h\|^2_{L^2(\omega_T)}+\|y^0\|^2_{L^2(\Omega)}\right).
		\end{array}
	\end{equation} 	
\end{prop}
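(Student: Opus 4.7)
The plan is to deduce existence from the abstract variational result of Theorem \ref{Theolions61} applied to the weak formulation \eqref{Defweaksolution}, and then to derive both uniqueness and the energy estimate \eqref{esty1y2} by testing the equation against the solution itself and applying Grönwall's inequality.

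To set up Theorem \ref{Theolions61}, take $F=L^2((0,T);H^1_a(\Omega))$, $\Phi=\mathbb{V}$ equipped with the scalar product inducing $\|\cdot\|_{\mathbb{V}}$, and define
$$E(y,\phi)=-\int_Q y\phi_t\,\dq+\int_Q a(x)y_x\phi_x\,\dq+\int_Q a_0 y\phi\,\dq+\int_Q \beta(x)b_0 y_x\phi\,\dq,$$
together with the linear form $L(\phi)$ read off from the right-hand side of \eqref{Defweaksolution}. Since no sign condition is assumed on $a_0$, first perform the exponential change of unknown $y(t,x)=e^{\lambda t}z(t,x)$ with $\lambda>0$ large enough that the new zeroth-order coefficient $a_0+\lambda$ satisfies the positivity required by Assumption \ref{aspt}; it then suffices to work under that assumption on $a_0$.

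The continuous embedding $\Phi\hookrightarrow F$ is immediate from the definition of $\|\cdot\|_{\mathbb{V}}$. Continuity of $y\mapsto E(y,\phi)$ for fixed $\phi$ is routine, the only delicate term being the drift, which is handled by writing $\int_Q\beta b_0 y_x\phi = \int_Q (\beta/\sqrt{a})\,b_0\,(\sqrt{a}\,y_x)\,\phi$ and using the pointwise bound $|\beta(x)|\le L\sqrt{a(x)}$ from \eqref{b1}. For the coercivity \eqref{theolions2}, three ingredients combine: the cancellation $-\int_Q\phi\phi_t = \tfrac12\|\phi(0,\cdot)\|_{L^2(\Omega)}^2$ (from $\phi(T,\cdot)=0$), the diffusion $\int_Q a\phi_x^2 = \|\sqrt{a}\phi_x\|^2_{L^2(Q)}$, and the lower bound $\int_Q a_0\phi^2\ge\alpha\|\phi\|^2_{L^2(Q)}$. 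The drift term is absorbed via Young's inequality and \eqref{b1}, producing at most $\tfrac12\|\sqrt{a}\phi_x\|^2_{L^2(Q)}+\tfrac{L^2}{2}\|b_0\|_\infty^2\|\phi\|^2_{L^2(Q)}$; the strict inequality $\alpha>\tfrac12 L^2\|b_0\|_\infty^2$ in Assumption \ref{aspt} is precisely what ensures a strictly positive remainder controlling each component of $\|\phi\|_{\mathbb{V}}^2$. Continuity of $L$ on $\Phi$ is standard (Cauchy-Schwarz plus the trace bound $\|\phi(0,\cdot)\|_{L^2(\Omega)}\le\|\phi\|_{\mathbb{V}}$). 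Theorem \ref{Theolions61} then produces $y\in F$ satisfying \eqref{Defweaksolution}; inspecting the equation shows $y_t\in L^2((0,T);(H^1_a(\Omega))')$, so by \eqref{contWTA} one has $y\in W_a(0,T)\hookrightarrow\mathbb{H}$.

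For the a priori estimate \eqref{esty1y2}, test the equation with $y$ on $(0,t)\times\Omega$: the time derivative contributes $\tfrac12\|y(t,\cdot)\|_{L^2(\Omega)}^2-\tfrac12\|y^0\|_{L^2(\Omega)}^2$, the diffusion yields $\|\sqrt{a}y_x\|^2_{L^2((0,t)\times\Omega)}$, and the drift and zero-th order terms are absorbed exactly as in the coercivity step (using $\|a_0\|_\infty$, $L$, and $\|b_0\|_\infty$). Bounding the source by Cauchy-Schwarz and Young's inequality, then applying Grönwall's inequality to $t\mapsto\|y(t,\cdot)\|_{L^2(\Omega)}^2$ yields \eqref{esty1y2}, and reversing the exponential shift only modifies the constant $C$ in the expected way. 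Uniqueness follows by applying this estimate to the difference of two weak solutions with vanishing data. The main technical point is the simultaneous treatment of the gradient term and the degeneracy of $a$ at $x=0$: hypothesis \eqref{b} together with the monotonicity of $x^2/a(x)$ noted in Remark \ref{rmq}(a) is precisely what produces the bound $|\beta|\le L\sqrt{a}$, so that the drift $\beta(x)b_0 y_x$ is \emph{compatible with} and can be absorbed by the degenerate diffusion rather than destroying coercivity.
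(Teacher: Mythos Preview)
Your proposal is correct and follows essentially the same route as the paper: both apply Theorem~\ref{Theolions61} with $F=L^2((0,T);H^1_a(\Omega))$ and $\Phi=\mathbb{V}$, establish coercivity via the bound $|\beta|\le L\sqrt{a}$ together with the strict inequality of Assumption~\ref{aspt}, and then derive the energy estimate and uniqueness by testing against the solution. The only cosmetic differences are that the paper orders the steps as (estimate, existence, uniqueness) and obtains \eqref{esty1y2} by the exponential shift $z=e^{-rt}y$ with an explicit choice of $r$, which is the closed-form version of the Grönwall argument you use; your explicit remark that the shift $y=e^{\lambda t}z$ reduces the general $a_0$ to the positive case is in fact slightly cleaner than the paper, which invokes Assumption~\ref{aspt} in Step~2 without commenting on this reduction.
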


The proof of Proposition \ref{exis} can be found in the Appendix.

Similar to the linear problem \eqref{ywell}, we give the definition of weak solution to the semilinear system \eqref{eq}.

\begin{dfnt}\label{weaksolutionsemi}
	A function $y$ is called the weak solution of the problem \eqref{eq} if $y\in \mathbb{H}$ and for any function $\phi\in \mathbb{V}$, the following equality holds
	\begin{equation}\label{Defweaksolutionsemi}
		\begin{array}{lll}
			\dis -\int_Q y\phi_t \, \dq + \int_Q a(x)y_x\phi_x\,\dq+\int_{Q}F(y,y_x) \phi\, \dq\\
			=\dis \int_{Q} (h\chi_{\omega}+v^1\chi_{\omega_1}+v^2\chi_{\omega_2})\, \phi\; \dq
			+\int_{\Omega} y^0(x)\,\phi(0,x)\ dx.
		\end{array}
	\end{equation}
\end{dfnt}
The following result of existence and uniqueness of solutions to system \eqref{eq} is proved in the Appendix.
\begin{theorem}\label{exissemi}$ $

For any $y^0\in L^2(\Omega)$, $h\in L^2(\omega_T)$ and   $(v^1, v^2)^{t}\in \mathcal{H}$, the system \eqref{eq} has a unique weak solution $y\in\mathbb{H}$ in the sense of Definition \ref{weaksolutionsemi}. .
\end{theorem}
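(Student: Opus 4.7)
The plan is to use a fixed-point argument built on the linear well-posedness result of Proposition \ref{exis}. The key idea is to linearize the nonlinearity $F(y,y_x)$ through the factorization provided by ($H_4$), and to use the bounds of ($H_5$) to keep the resulting frozen coefficients in $L^\infty(Q)$.

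For each $z\in \mathbb{H}$ I would set
$$
a_0(t,x):=F_1(z(t,x),z_x(t,x)),\qquad b_0(t,x):=\frac{F_2(z(t,x),z_x(t,x))}{\beta(x)},
$$
which, by ($H_5$), belong to $L^\infty(Q)$ with norms at most $M_1$ independently of $z$. Proposition \ref{exis}, applied after the exponential shift $y=e^{\lambda t}\tilde y$ with $\lambda$ chosen large enough so that the shifted zeroth-order coefficient $a_0+\lambda$ fulfils Assumption \ref{aspt}, then produces a unique weak solution $y\in \mathbb{H}$ of \eqref{ywell} with these coefficients. Defining $\Lambda(z):=y$, any fixed point of $\Lambda$ is, by construction, a weak solution of \eqref{eq} in the sense of Definition \ref{weaksolutionsemi}.

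To apply a Schauder-type fixed-point theorem on $L^2(Q)$, the estimate \eqref{esty1y2} first gives, uniformly in $z$,
$$
\|\Lambda(z)\|_{\mathbb{H}}^2\leq C\bigl(\|h\|_{L^2(\omega_T)}^2+\|v^1\|_{L^2(\omega_{1,T})}^2+\|v^2\|_{L^2(\omega_{2,T})}^2+\|y^0\|_{L^2(\Omega)}^2\bigr),
$$
so $\Lambda(\mathbb{H})$ is contained in a fixed ball $B$. Compactness of $\Lambda(B)$ in $L^2(Q)$ follows from the Aubin--Lions lemma together with the compact embedding $H^1_a(\Omega)\hookrightarrow L^2(\Omega)$ of Remark \ref{remwta}. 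Continuity of $\Lambda$ for the $L^2(Q)$ topology is obtained by taking $z_n\to z$ in $L^2(Q)$, extracting an almost everywhere convergent subsequence, using the continuity of $F_1,F_2$ (from the smoothness hypotheses ($H_2$) and ($H_6$)) together with the uniform $L^\infty$ bound from ($H_5$) and dominated convergence to pass to the limit in the weak formulation \eqref{Defweaksolution}. Schauder's fixed-point theorem then furnishes a weak solution of \eqref{eq}.

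For uniqueness, given two solutions $y_1,y_2$, the difference $w=y_1-y_2$ satisfies a homogeneous equation with zero initial data and source $F(y_2,y_{2x})-F(y_1,y_{1x})$. I would rewrite this source using ($H_4$) as
$$
F_1(y_1,y_{1x})\,w+F_2(y_1,y_{1x})\,w_x+[F_1(y_1,y_{1x})-F_1(y_2,y_{2x})]\,y_2+[F_2(y_1,y_{1x})-F_2(y_2,y_{2x})]\,y_{2x},
$$
test the equation against $w$, and integrate in space. The main expected obstacle is controlling the gradient-type contributions, because the $y_{ix}$ are only square-integrable against the degenerate weight $a(x)$; this is precisely where the structural bound on $F_2$ provided by ($H_5$), combined with \eqref{b1}, becomes essential, as it allows each problematic term to be absorbed into the dissipation $\int_\Omega a\,w_x^2\,dx$ via Young's inequality. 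A Gr\"onwall argument in $\|w(t)\|_{L^2(\Omega)}^2$ then concludes $w\equiv 0$.
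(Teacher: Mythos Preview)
Your existence argument via Schauder's fixed-point theorem is essentially the paper's. The only difference is cosmetic: you freeze the coefficients through the factorization $(H_4)$, taking $a_0=F_1(z,z_x)$ and $b_0=F_2(z,z_x)/\beta$, while the paper uses the integral form $d_1(z)=\int_0^1 D_1F(rz,rz_x)\,dr$ and $d_2(z)=\tfrac{1}{\beta}\int_0^1 D_2F(rz,rz_x)\,dr$. Both choices yield $L^\infty$ coefficients uniformly in $z$ and the same fixed point. One caution: the continuity step for $\Lambda$ requires continuity of the frozen coefficients in $(z,z_x)$, and hypotheses $(H_2)$, $(H_6)$ concern $F$ and its derivatives, not $F_1,F_2$; the paper's choice via $D_iF$ avoids having to justify regularity of the factors $F_1,F_2$.

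Your uniqueness argument, however, has a real gap. The four-term decomposition via $(H_4)$ produces the cross terms
\[
[F_1(y_1,y_{1x})-F_1(y_2,y_{2x})]\,y_2
\qquad\text{and}\qquad
[F_2(y_1,y_{1x})-F_2(y_2,y_{2x})]\,y_{2x},
\]
and to feed these into a Gr\"onwall inequality you need them to be $O(|w|+|w_x|)$, i.e.\ you need Lipschitz bounds on $F_1$ and $F_2$. None of $(H_1)$--$(H_6)$ supplies that: $(H_5)$ only bounds $|F_1|$ and $|F_2/\beta|$, while $(H_3)$ and $(H_6)$ concern $F$ itself. Nor can you fall back on $(H_3)$ directly, since $|F(y_1,y_{1x})-F(y_2,y_{2x})|\leq K(|w|+|w_x|)$ leaves a bare $|w_x|$ with no $\beta$ or $\sqrt a$ factor, which the degenerate dissipation $\int_\Omega a\,w_x^2$ cannot absorb. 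The paper sidesteps this by writing, via the fundamental theorem of calculus along the segment joining $(y_2,y_{2x})$ to $(y_1,y_{1x})$,
\[
F(y_1,y_{1x})-F(y_2,y_{2x})=\theta_1\,w+\beta(x)\,\theta_2\,w_x,
\qquad
\theta_1=\int_0^1 D_1F(\cdot)\,dr,\quad
\theta_2=\frac{1}{\beta}\int_0^1 D_2F(\cdot)\,dr,
\]
so that $w$ solves a \emph{homogeneous} linear problem of the form \eqref{ywell} with bounded coefficients, and the uniqueness part of Proposition~\ref{exis} immediately gives $w\equiv 0$. The point is that this two-term representation places the factor $\beta(x)$ in front of $w_x$, which is precisely what allows absorption via \eqref{b1}; your $(H_4)$-based splitting does not.
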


We state the Hardy-Poincar\'e inequality which is a standard tool in the analysis of degenerate systems. The following result is proved in \cite[Propsition 2.1]{alabau2006}.

\begin{prop}(Hardy-Poincar\'e inequality) $ $
	
	Assume that $a:[0;1]\longrightarrow\R_+$ is in $\mathcal{C}([0;1])$, $a(0)=0$ and $a>0$ on $(0;1]$. Furthermore, assume that $a$ is such that there exists $\theta\in (0;1)$ such that the function $\dis x\longmapsto\frac{a(x)}{x^\theta}$ is non-increasing in neighbourhood of $x=0$. Then, there is a constant $\overline{C}>0$ such that for any $z$, locally absolutely continuous on $(0;1]$, continuous at $0$ and satisfying $z(0)=0$ and $\dis \int_0^1 a(x)|z^\prime(x)|^2\ dx<+\infty$, the following inequality holds
	
	\begin{equation}\label{hardy}
	\int_0^1 \frac{a(x)}{x^2}|z(x)|^2\ dx<\overline{C}\int_0^1 a(x)|z^\prime(x)|^2\ dx.	
	\end{equation}
	Moreover, under the same hypothesis on $z$ and the fact that the function $\dis x\longmapsto\frac{a(x)}{x^\theta}$ is non-increasing on $(0;1]$, then the inequality \eqref{hardy} holds with $\dis \overline{C}=\frac{4}{(1-\theta)^2}.$	
\end{prop}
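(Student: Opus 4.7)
The plan is to obtain the weighted inequality via an integration by parts identity on a truncated interval $[\epsilon,1]$, use the monotonicity of $x\mapsto a(x)/x^\theta$ to absorb a derivative term, apply Young's inequality to a cross term, and finally pass to the limit $\epsilon\to 0^+$.

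First, observing that $\frac{d}{dx}\Bigl(-\frac{a(x)}{x}z(x)^2\Bigr)=\frac{a(x)}{x^2}z(x)^2-\frac{a'(x)}{x}z(x)^2-\frac{2a(x)}{x}z(x)z'(x)$, integration on $[\epsilon,1]$ yields
$$\int_\epsilon^1\frac{a(x)}{x^2}z(x)^2\,dx=-\Bigl[\frac{a(x)}{x}z(x)^2\Bigr]_\epsilon^1+\int_\epsilon^1\frac{a'(x)}{x}z(x)^2\,dx+2\int_\epsilon^1\frac{a(x)}{x}z(x)z'(x)\,dx.$$
The hypothesis that $x\mapsto a(x)/x^\theta$ is non-increasing on $(0,1]$ is equivalent to $xa'(x)\leq\theta a(x)$, which bounds the second right-hand term by $\theta\int_\epsilon^1\frac{a(x)}{x^2}z(x)^2\,dx$. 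For the cross term, Young's inequality with parameter $\lambda>0$ gives $2\bigl|\frac{a(x)}{x}z(x)z'(x)\bigr|\leq\lambda\frac{a(x)}{x^2}z(x)^2+\frac{1}{\lambda}a(x)z'(x)^2$. Inserting these two bounds and dropping the nonpositive boundary contribution $-a(1)z(1)^2$ at $x=1$ yields
$$(1-\theta-\lambda)\int_\epsilon^1\frac{a(x)}{x^2}z(x)^2\,dx\leq\frac{a(\epsilon)}{\epsilon}z(\epsilon)^2+\frac{1}{\lambda}\int_\epsilon^1 a(x)z'(x)^2\,dx.$$
Optimising over $\lambda\in(0,1-\theta)$ at $\lambda=(1-\theta)/2$ produces the sharp constant $4/(1-\theta)^2$.

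The main obstacle is handling the boundary term $\frac{a(\epsilon)}{\epsilon}z(\epsilon)^2$ as $\epsilon\to 0^+$. My plan is first to prove the inequality on the dense subclass $z\in C^\infty_c((0,1])$ with $z$ vanishing near $0$, for which this term is identically zero, and then to extend to the full admissible class by approximation in the weighted energy norm $z\mapsto\bigl(\int_0^1 a(x)|z'(x)|^2\,dx\bigr)^{1/2}$. For the approximation I would truncate $z$ smoothly to zero on a shrinking neighborhood of the origin; the hypothesis $\int_0^1 a(x)|z'(x)|^2\,dx<\infty$ together with $z(0)=0$ and absolute continuity lets me show that the truncations converge in the weighted norm and pointwise a.e., and by Fatou's lemma the inequality transfers to the limit.

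For the first, nonoptimal, part of the statement (where the monotonicity is only assumed on a neighbourhood $(0,\delta)$), I would split the integral at $\delta$. On $(0,\delta)$ the same argument above applies verbatim; on $(\delta,1)$, the function $a$ is bounded below by a positive constant, so $\int_\delta^1\frac{a(x)}{x^2}z(x)^2\,dx\leq\delta^{-2}\|a\|_\infty\int_\delta^1 z(x)^2\,dx$, which is controlled by a classical Poincaré inequality using $z(1)=0$ (or alternatively $z(\delta)$ bounded by the $(0,\delta)$ estimate) and then by $\int_\delta^1 a(x)z'(x)^2\,dx$ using again the lower bound on $a$. Combining these gives a constant $\overline C$ depending on $a$, $\theta$ and $\delta$, which suffices for the qualitative statement.
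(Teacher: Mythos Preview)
The paper does not actually prove this proposition: it is quoted verbatim from \cite[Proposition~2.1]{alabau2006} and used as a black box throughout. So there is no ``paper's own proof'' to compare against; what you have written is an independent proof of a cited result.

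Your argument is the standard integration-by-parts route to weighted Hardy inequalities and is essentially correct, with the optimal constant $4/(1-\theta)^2$ emerging exactly as it should. Two points deserve attention. First, you differentiate $a$ freely and invoke the equivalence ``$a(x)/x^\theta$ non-increasing $\Leftrightarrow xa'(x)\le\theta a(x)$'', but the proposition as stated only assumes $a\in\mathcal{C}([0,1])$, not $\mathcal{C}^1$. The monotonicity of $a(x)/x^\theta$ does give a.e.\ differentiability, but your integration-by-parts identity on $[\epsilon,1]$ needs $\frac{a(x)}{x}z(x)^2$ to be absolutely continuous there, which is not automatic from mere continuity of $a$. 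In practice this is harmless (in the applications of this paper $a\in\mathcal{C}^1((0,1])$ by hypothesis \eqref{k}), and one can also regularise $a$ and pass to the limit, but you should flag the extra regularity you are using. Second, in your treatment of the non-optimal case you appeal to ``a classical Poincar\'e inequality using $z(1)=0$'', but the proposition does not assume $z(1)=0$; only $z(0)=0$ is given. Your parenthetical alternative---controlling $z(\delta)$ from the $(0,\delta)$ estimate and then propagating on $[\delta,1]$ via $z(x)=z(\delta)+\int_\delta^x z'$ together with the positive lower bound on $a$ there---is the correct fix, and you should make that the main line rather than the fallback.
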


\section{Characterization of Nash quasi-equilibrium}\label{low}

In this section, we give the following characterization of Nash quasi-equilibrium pair (see Definition \ref{dfntquasi}) $(\hat{v}^1,\ \hat{v}^2)$ of system \eqref{eq} for functionals $J_i,\ i=1,2$ given by \eqref{all16}.

\begin{prop}\label{quasi}$ $
	
Let $h\in L^2(\omega_T)$ and assume that $\mu_i,\ i=1,2$ are sufficiently large. Let also $(\hat{v}^1,\ \hat{v}^2)\in \mathcal{H}$ be the Nash quasi-equilibrium pair for $(J_1,J_2)$. Then, there exist $p^i\in \mathbb{H}$ such that the Nash quasi-equilibrium pair $(\hat{v}^1, \hat{v}^2)\in \mathcal{H}$ is characterized by
\begin{equation}\label{vop}
	\hat{v}^i=-\frac{1}{\mu_i}p^i\ \ \mbox{in}\ \ \ (0,T)\times \omega_i,\ i=1,2,	
\end{equation}
where  $(y,p^1,p^2)$ is the solution of the following optimality systems
\begin{equation}\label{yop}
	\left\{
	\begin{array}{rllll}
		\dis y_{t}-\left(a(x)y_{x}\right)_{x}+F(y,y_x)  &=&\dis h\chi_{\omega}-\frac{1}{\mu_1}p^1\chi_{\omega_1}-\frac{1}{\mu_2}p^2\chi_{\omega_2}& \mbox{in}& Q,\\
		\dis y(t,0)=y(t,1)&=&0& \mbox{on}& (0,T), \\
		\dis y(0,\cdot)&=&y^0&\mbox{in}&\Omega
	\end{array}
	\right.
\end{equation}
and 
\begin{equation}\label{pop}
	\left\{
	\begin{array}{rllll}
		\dis -p_{t}^i-\left(a(x)p^i_{x}\right)_{x}+D_1F(y,y_x)p^i-\left(\beta(x)\left[\frac{D_2F(y,y_x)}{\beta(x)}\right]p^i\right)_x   &=&\alpha_i\left(y-y_{i,d}\right)\chi_{\omega_{i,d}}& \mbox{in}& Q,\\
		\dis p^i(t,0)=p^i(t,1)&=&0& \mbox{on}& (0,T), \\
		\dis p^i(T,\cdot)&=&0 &\mbox{in}&\Omega.
	\end{array}
	\right.
\end{equation}
\end{prop}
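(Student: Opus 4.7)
The plan is to apply the classical control--state--adjoint calculus directly to the quasi-equilibrium condition \eqref{Nashprime}. Since only a first-order variational identity is required (not convexity), the whole argument reduces to computing the Gâteaux derivative of the control-to-state map and eliminating the state sensitivity by duality with a suitable adjoint.

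First, fix $h$ and the opposite component $\hat{v}^{3-i}$, and look at the map $v^i \mapsto y[v^i]$ from $L^2((0,T);L^2(\omega_i))$ into $\mathbb{H}$, where $y[v^i]$ solves \eqref{eq}. I would show that this map is Gâteaux differentiable at $\hat{v}^i$, with derivative in the direction $w^i$ equal to the unique $z^i \in \mathbb{H}$ solving the linearized problem
\begin{equation*}
\left\{
\begin{array}{l}
z^i_t - (a(x)z^i_x)_x + D_1F(y,y_x)\, z^i + D_2F(y,y_x)\, z^i_x = w^i\chi_{\omega_i} \ \mbox{in } Q, \\
z^i(t,0) = z^i(t,1) = 0 \ \mbox{on } (0,T), \qquad z^i(0,\cdot) = 0 \ \mbox{in } \Omega.
\end{array}
\right.
\end{equation*}
Well-posedness in $\mathbb{H}$ follows from Proposition \ref{exis} after rewriting $D_2F(y,y_x) = \beta(x)\bigl[D_2F(y,y_x)/\beta(x)\bigr]$: the bounds in Assumption \ref{assum} guarantee that $D_1F$ and $D_2F/\beta$ belong to $L^\infty(Q)$, and the positivity required by Assumption \ref{aspt} can be recovered through the exponential change of unknown $\tilde z^i := e^{-\lambda t}z^i$ for $\lambda$ large enough.

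Second, by the chain rule the partial Gâteaux derivative of $J_i$ writes
\begin{equation*}
\frac{\partial J_i}{\partial v^i}(h;\hat{v}^1,\hat{v}^2)\cdot(0,\ldots,w^i,\ldots,0) = \alpha_i\int_0^T\!\!\int_{\omega_{i,d}}(y-y_{i,d})\, z^i\,dxdt + \mu_i\int_0^T\!\!\int_{\omega_i}\hat{v}^i\, w^i\,dxdt.
\end{equation*}
Next, introduce the adjoint state $p^i \in \mathbb{H}$ as the unique solution of \eqref{pop}, whose well-posedness follows from Proposition \ref{exis} after the time reversal $t\mapsto T-t$; the divergence form $-(\beta(x)[D_2F/\beta] p^i)_x$ is designed precisely to be the formal adjoint of the gradient term $D_2F\, z^i_x$. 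Multiplying the $z^i$-equation by $p^i$ and integrating by parts in $Q$, and using $z^i(0,\cdot)=0$, $p^i(T,\cdot)=0$ together with the Dirichlet conditions at $x=0,1$, yields the duality identity
\begin{equation*}
\int_0^T\!\!\int_\Omega \alpha_i(y-y_{i,d})\chi_{\omega_{i,d}}\, z^i\,dxdt = \int_0^T\!\!\int_{\omega_i} p^i\, w^i\,dxdt.
\end{equation*}
Plugging this into the quasi-equilibrium condition gives $\int_0^T\!\int_{\omega_i}(p^i + \mu_i\hat{v}^i)w^i\,dxdt = 0$ for every $w^i \in L^2((0,T);L^2(\omega_i))$, whence \eqref{vop}.

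The main obstacle will be the rigorous justification of the integration by parts at the degenerate endpoint $x=0$, since elements of $\mathbb{H}$ need not admit classical traces there. I would deal with it by a density argument in $H^2_a(\Omega)$, combined with the Hardy--Poincaré inequality \eqref{hardy} and the bound \eqref{bet}, to show that the boundary contribution $a(x)z^i_x\, p^i\big|_{x=0}$ vanishes. A secondary technical point is the Gâteaux differentiability of $v^i\mapsto y[v^i]$ in the nonlinear regime: the global Lipschitz bound $(H_3)$ together with the uniform $C^2$ bounds $(H_6)$ produce the $o(\tau)$ remainder estimate $y[\hat{v}^i + \tau w^i] - y[\hat{v}^i] - \tau z^i = o(\tau)$ in $\mathbb{H}$ via a Grönwall argument applied to the equation satisfied by the remainder. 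The hypothesis that $\mu_i$ is large is not used in the characterization itself; it enters only later to guarantee existence of the Nash quasi-equilibrium through a contraction scheme.
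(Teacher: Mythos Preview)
Your proposal is correct and follows essentially the same route as the paper: compute the G\^ateaux derivative of $J_i$ via the linearized sensitivity $z^i$, introduce the adjoint $p^i$ solving \eqref{pop}, and use the duality pairing between the $z^i$-equation and the $p^i$-equation to convert the Euler--Lagrange identity into \eqref{vop}. The paper's proof is terser (it simply asserts the integration-by-parts identity without discussing the degenerate boundary term or the differentiability of the control-to-state map), so your added technical remarks on well-posedness, the Hardy--Poincar\'e/density argument at $x=0$, and the irrelevance of the largeness of $\mu_i$ for the characterization itself are all valid refinements rather than deviations.
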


\begin{proof}
	
If $(\hat{v}^1,\ \hat{v}^2)$ is a Nash quasi-equilibrium in the sense of the Definition \ref{dfntquasi}, then we have
\begin{equation}\label{EL}
	\begin{array}{rll}
		&&\dis 	\alpha_i\int_0^T\int_{\omega_{i,d}}(y-y_{i,d})z^i\dT
		 +\mu_i\int_0^T\int_{\omega_i}\hat{v}^iv^i\ \dT=0,\ \mbox{for all}\  v^i\in L^2((0,T);L^2(\omega_i)),
	\end{array}
\end{equation}
where $z^i$ is the solution of 
\begin{equation}\label{zop}
	\left\{
	\begin{array}{rllll}
		\dis z_{t}^i-\left(a(x)z^i_{x}\right)_{x}+D_1F(y,y_x)z^i+\beta(x)\left[\frac{D_2F(y,y_x)}{\beta(x)}\right]z_x^i   &=&v^i\chi_{\omega_{i}}& \mbox{in}& Q,\\
		\dis z^i(t,0)=z^i(t,1)&=&0& \mbox{on}& (0,T), \\
		\dis z^i(0,\cdot)&=&0 &\mbox{in}&\Omega.
	\end{array}
	\right.
\end{equation}

Now, let $p^i$ be the solution of the following (adjoint) system
\begin{equation*}
	\left\{
	\begin{array}{rllll}
		\dis -p_{t}^i-\left(a(x)p^i_{x}\right)_{x}+D_1F(y,y_x)p^i-\left(\beta(x)\left[\frac{D_2F(y,y_x)}{\beta(x)}\right]p^i\right)_x   &=&\alpha_i\left(y-y_{i,d}\right)\chi_{\omega_{i,d}}& \mbox{in}& Q,\\
		\dis p^i(t,0)=p^i(t,1)&=&0& \mbox{on}& (0,T), \\
		\dis p^i(T,\cdot)&=&0 &\mbox{in}&\Omega.
	\end{array}
	\right.
\end{equation*}
If we multiply the first equation of this latter system by $z^i$ solution of \eqref{zop} and we integrate by parts over $Q$, we obtain
\begin{equation*}
	\begin{array}{rll}
		\dis 	\alpha_i\int_0^T\int_{\omega_{i,d}}(y-y_{i,d})z^i\ \dT =\int_0^T\int_{\omega_i}\hat{v}^ip^i\ \dT.
	\end{array}
\end{equation*}
Combining this latter equality with \eqref{EL}, we obtain
\begin{equation*}
	\begin{array}{rll}
		\dis \int_0^T\int_{\omega_i}v^i\left(p^i+\mu_i\hat{v}^i\right)\ \dT=0,\ \mbox{for all}\  v^i\in L^2((0,T);L^2(\omega_i)),
	\end{array}
\end{equation*}
from which we deduce \eqref{vop}-\eqref{pop}.
\end{proof}

Now, let us consider the following linearized systems for \eqref{yop}-\eqref{pop}
\begin{equation}\label{yli}
	\left\{
	\begin{array}{rllll}
		\dis y_{t}-\left(a(x)y_{x}\right)_{x}+a_1y+\beta(x)a_2y_x  &=&\dis h\chi_{\omega}-\frac{1}{\mu_1}p^1\chi_{\omega_1}-\frac{1}{\mu_2}p^2\chi_{\omega_2}& \mbox{in}& Q,\\
		\dis y(t,0)=y(t,1)&=&0& \mbox{on}& (0,T), \\
		\dis y(0,\cdot)&=&y^0&\mbox{in}&\Omega
	\end{array}
	\right.
\end{equation}
and 
\begin{equation}\label{pli}
	\left\{
	\begin{array}{rllll}
		\dis -p_{t}^i-\left(a(x)p^i_{x}\right)_{x}+b_1p^i-(\beta(x)b_2p^i)_x   &=&\alpha_i\left(y-y_{i,d}\right)\chi_{\omega_{i,d}}& \mbox{in}& Q,\\
		\dis p^i(t,0)=p^i(t,1)&=&0& \mbox{on}& (0,T), \\
		\dis p^i(T,\cdot)&=&0 &\mbox{in}&\Omega,
	\end{array}
	\right.
\end{equation}
where $a_i, b_i\in L^\infty(Q),\ i=1,2$.

Next, we consider the adjoint systems for \eqref{yli}-\eqref{pli}.
\begin{equation}\label{rho}
	\left\{
	\begin{array}{rllll}
		\dis -\rho_{t}-\left(a(x)\rho_{x}\right)_{x}+a_1\rho-(\beta(x)a_2\rho)_x &=&\dis \alpha_1\psi^1\chi_{\omega_{1,d}}+\alpha_2\psi^2\chi_{\omega_{2,d}}& \mbox{in}& Q,\\
		\dis \rho(t,0)=\rho(t,1)&=&0& \mbox{on}& (0,T), \\
		\dis \rho(T,\cdot)&=&\rho^T &\mbox{in}&\Omega
	\end{array}
	\right.
\end{equation}
and 
\begin{equation}\label{psi}
	\left\{
	\begin{array}{rllll}
		\dis \psi_{t}^i-\left(a(x)\psi^i_{x}\right)_{x}+b_1\psi^i+\beta(x)b_2\psi_x^i   &=&\dis-\frac{1}{\mu_i}\rho\chi_{\omega_{i}}& \mbox{in}& Q,\\
		\dis\psi^i(t,0)=\psi^i(t,1)&=&0& \mbox{on}& (0,T), \\
		\dis \psi^i(0,\cdot)&=&0 &\mbox{in}&\Omega,
	\end{array}
	\right.
\end{equation}
where $\rho^T\in L^2(\Omega)$.

 \begin{remark}
 	$ $
 	\begin{enumerate}
 		\item Notice that the existence and uniqueness of a solution for \eqref{yop}-\eqref{pop} guaranteed by Proposition \ref{exis} implies the existence and uniqueness of a Nash quasi-equilibrium in the sense of Definition \ref{dfntquasi}.
 		
 		\item Using some ideas of \cite{araruna2015anash, djomegne2021}, we can prove the following estimate
 		\begin{equation}\label{v}
 		\|(\hat{v}_1,\hat{v}_2)\|_{\mathcal{H}}\leq 
 		C\left(\sum_{i=1}^{2}\|y_{i,d}\|_{L^2((0,T)\times\omega_{i,d})}+
 		\|h\|_{L^2(\omega_T)}+\|y^0\|_{L^2(\Omega)}\right),
 		\end{equation} 	
 	where $C=C(\|a_1\|_{\infty},\|a_2\|_{\infty}, T,\alpha_1,\alpha_2)$ is a positive constant.
 	\end{enumerate}	
 \end{remark}

\section{On the convexity of $J_i$, $i=1,2$}\label{conv}

In this section, we prove Proposition \ref{nash} establishing the equivalence between Nash quasi-equilibrium and Nash equilibrium in the semilinear case.
We recall that 
\begin{equation*}
J_i(h;v^1,v^2)=\frac{\alpha_i}{2}\int_0^T\int_{\omega_{i,d}}|y-y_{i,d}|^2\ dxdt+\frac{\mu_i}{2}\int_0^T\int_{\omega_i}|v^i|^2\ dxdt,\ i=1,2.
\end{equation*}
\textbf{Proof of Proposition \ref{nash}.} 

 Let $h\in L^2(\omega_T)$ be given and let $(\hat{v}_1, \hat{v}_2)$ be the associated Nash quasi-equilibrium. For any $w^1,w^2\in L^2(\omega_{1,T})$ and $s\in \R$, we denote by $y^s=(y_1^s,y_2^s)$ the solution of the following system
 	\begin{equation}\label{ys}
 	\left\{
 	\begin{array}{rllll}
 	\dis y^s_{t}-\left(a(x)y^s_{x}\right)_{x}+F(y^s,y_x^s)  &=&h\chi_{\omega}+(v^1+sw^1)\chi_{\omega_1}+v^2\chi_{\omega_2}& \mbox{in}& Q,\\
 	\dis y^s(t,0)=y^s(t,1)&=&0& \mbox{on}& (0,T), \\
 	\dis  y^s(0,\cdot)&=&y^0 &\mbox{in}&\Omega,
 	\end{array}
 	\right.
 	\end{equation}
 	and we set $y:=y^s|_{s=0}$.
 	
 	Then, we have
 	\begin{equation}\label{dj}
 	\begin{array}{llll}
 	&&\dis D_1J_1(h;\hat{v}^1+sw^1,\hat{v}^2)\cdot w^2-	D_1J_1(h;\hat{v}^1,\hat{v}^2)\cdot w^2=s\mu_1\int_0^T\int_{\omega_{1}}w^1w^2\ dxdt\\
 	&&\dis +\alpha_1\int_0^T\int_{\omega_{1,d}}(y^s-y_{1,d})z^{s}\ dxdt-\alpha_1\int_0^T\int_{\omega_{1,d}}(y-y_{1,d})z\ dxdt,\\
 	\end{array}
 	\end{equation}
 	where $z^{s}$ is the derivative of the state $y^s$ with respect to $\hat{v}^1+sw^1$ in the direction $w^2$, that is $z^{s}$ is the solution to
 	\begin{equation}\label{zs}
 	\left\{
 	\begin{array}{rllll}
 	\dis z_{t}^{s}-\left(a(x)z^{s}_{x}\right)_{x}+D_1F(y^s,y_x^s)z^{s}+\beta(x)\left[\frac{D_2F(y^s,y_x^s)}{\beta(x)}\right]z_x^{s}   &=&w^2\chi_{\omega_{1}}& \mbox{in}& Q,\\
 	\dis z^{s}(t,0)=z^{s}(t,1)&=&0& \mbox{on}& (0,T), \\
 	\dis z^{s}(0,\cdot)&=&0 &\mbox{in}&\Omega.
 	\end{array}
 	\right.
 	\end{equation}
 	We will also use the notation $z:=z^{s}|_{s=0}$.
 	
 	Let us introduce the adjoint of \eqref{zs}
 	\begin{equation}\label{ps}
 	\left\{
 	\begin{array}{rllll}
 	\dis -p_{t}^{s}-\left(a(x)p^{s}_{x}\right)_{x}+D_1F(y^s,y_x^s)p^{s}-\left(\beta(x)\left[\frac{D_2F(y^s,y_x^s)}{\beta(x)}\right]p^{s}\right)_x   &=&\alpha_1\left(y^s-y_{1,d}\right)\chi_{\omega_{1,d}}& \mbox{in}& Q,\\
 	\dis p^{s}(t,0)=p^{s}(t,1)&=&0& \mbox{on}& (0,T), \\
 	\dis p^{s}(T,\cdot)&=&0 &\mbox{in}&\Omega
 	\end{array}
 	\right.
 	\end{equation}
 	and let us use the notation $p:=p^{s}|_{s=0}$.\\
 	Multiplying the first equation of \eqref{zs} by $p^{s}$ and integrating by parts over $Q$, we obtain
 	\begin{equation}\label{zp}
 	\alpha_1\int_0^T\int_{\omega_{1,d}}(y^s-y_{1,d})z^{s}\ dxdt=\int_{Q}w^2p^{s}\chi_{\omega_1}\ dxdt.	
 	\end{equation}
 	From \eqref{dj} and \eqref{zp}, we have
 	\begin{equation}\label{dj1}
 	\begin{array}{llll}
 	\dis D_1J_1(h;\hat{v}^1+sw^1,\hat{v}^2)\cdot w^2-	D_1J_1(h;\hat{v}^1,\hat{v}^2)\cdot w^2&=&\dis s\mu_1\int_0^T\int_{\omega_{1}}w^1w^2\ dxdt\\
 	\dis &+&\dis \int_0^T\int_{\omega_{1}}(p^{s}-p)w^2\ dxdt.
 	\end{array}
 	\end{equation}
 	Note that, the following limits exist
 	$$
 	\eta=\lim\limits_{s\to 0}\frac{1}{s}(p^{s}-p)\ \ \mbox{and}\ \ \phi=\lim\limits_{s\to 0}\frac{1}{s}(y^s-y)
 	$$
 	and verify the following systems:
 	\begin{equation}\label{ha}
 		\left\{
 		\begin{array}{rllll}
 			\dis \phi_{t}-\left(a(x)\phi_{x}\right)_{x}+D_1F(y,y_x)\phi+\beta(x)\left[\frac{D_2F(y,y_x)}{\beta(x)}\right]\phi_{x} &=&w^1\chi_{\omega_1}& \mbox{in}& Q,\\
 			\dis \phi(t,0)=\phi(t,1)&=&0& \mbox{on}& (0,T), \\
 			\dis \phi(0,\cdot)&=&0 &\mbox{in}&\Omega
 		\end{array}
 		\right.
 	\end{equation}
 and
 	\begin{equation}\label{eta}
 	\left\{
 	\begin{array}{rllll}
 	\dis -\eta_{t}-\left(a(x)\eta_{x}\right)_{x}+D_1F(y,y_x)\eta-\left(\beta(x)\left[\frac{D_2F(y,y_x)}{\beta(x)}\right]\eta\right)_x+\mathcal{G}(p,\phi)   &=&\alpha_1\phi\chi_{\omega_{1,d}}& \mbox{in}& Q,\\
 	\dis \eta(t,0)=\eta(t,1)&=&0& \mbox{on}& (0,T), \\
 	\dis \eta(T,\cdot)&=&0 &\mbox{in}&\Omega,
 	\end{array}
 	\right.
 	\end{equation}
 	where
 	$$
 	\mathcal{G}(p,\phi)=D_{11}^2F(y,y_x)p\phi+D_{12}^2F(y,y_x)p\phi_x-(D_{21}^2F(y,y_x)p\phi)_x-(D_{22}^2F(y,y_x)p\phi_x)_x.
 	$$
 	
 	Thus, from \eqref{dj1}-\eqref{eta} for $w^2=w^1$, we have 
 	\begin{equation}\label{dj3}
 	\dis D_1^2J_1(h;\hat{v}^1,\hat{v}^2)\cdot(w^1,w^1)= \int_0^T\int_{\omega_{1}}\eta w^1\ dxdt+\mu
 	_1\int_0^T\int_{\omega_{1}}|w^1|^2\ dxdt.
 	\end{equation}
 	
 	From a similar argument as in  the proof of Proposition $1.4$ \cite{araruna2015anash}, we can prove that there exists a constant $C>0$ independent of $h,\ \eta,\ \phi,\ w^1$ such that
 	\begin{equation}\label{a}
 	\left|\int_0^T\int_{\omega_{1}}\eta w^1\ dxdt\right|\leq C\|w^1\|_{L^2(\omega_{1,T})}.
 	\end{equation}
 	Using \eqref{a} together with \eqref{dj3}, we have 
 	\begin{equation*}
 	\dis D_1^2J_1(h;\hat{v}^1,\hat{v}^2)\cdot(w^1,w^1)\geq\left(\mu_1-C\right) \int_{\omega_{1,T}}|w^1|^2\ dxdt,\ \forall w^1\in L^2(\omega_{1,T}).
 	\end{equation*}
 	In a similar way, we show that
 	\begin{equation*}
 	\dis D_2^2J_2(h;\hat{v}^1,\hat{v}^2)\cdot(w^2,w^2)\geq\left(\mu_2-C\right) \int_{\omega_{2,T}}|w^2|^2\ dxdt,\ \forall w^2\in L^2(\omega_{2,T}),
 	\end{equation*}
 for another positive constant $C$ independent of $\mu_1$ and $\mu_2$.
 For $\mu_i$ sufficiently large, the functional $J_i,\ i=1,2$ given by \eqref{all16} are convex and thus, the pair $(\hat{v}^1,\hat{v}^2)$ is a Nash equilibrium in the sense of \eqref{Nash1}. \hfill $\blacksquare$
 
\section{Carleman estimates}\label{Carleman}

The goal of this section is to establish some Carleman estimates for systems \eqref{rho}-\eqref{psi}. Let us begin by introducing some weight functions.

Since $\omega_d\cap \omega\neq \emptyset$ (recall \eqref{oda}), then there exist a non-empty open set $\O_1\Subset \omega_d\cap \omega$ and a function $\sigma\in \mathcal{C}^2([0,1])$ such that
\begin{equation}\label{sigma}
\left\{
\begin{array}{llll}
 \sigma(x)>0\quad\text{in}\quad (0,1),\quad \sigma(0)=\sigma(1)=0,\\
\sigma_x(x)\neq 0\quad \text{in}\quad [0,1]\setminus\O_0,
\end{array}
\right.
\end{equation}
where $\O_0\Subset\O_1$ is an open subset. The existence of such a function $\sigma$ is proved in \cite{FursikovImanuvilov}. \\
Let  $\tau\in [0,1)$ be as in the assumption \eqref{k} and  $r, d\in \R$ be such that 
\begin{equation}\label{condrd}
r\geq \frac{4ln(2)}{\|\sigma\|_{\infty}} \hbox{ and } d\geq \frac{5}{a(1)(2-\tau)}.
\end{equation}
Then, the interval $\dis I= \left[\frac{a(1)(2-\tau)(e^{2r\|\sigma\|_{\infty}}-1)}{d\ a(1)(2-\tau)-1},
\frac{4(e^{2r\|\sigma\|_{\infty}}-e^{r\|\sigma\|_{\infty}})}{3d}\right]$
is non-empty (see  \cite{birba2016}). Let $\lambda\in I$. For $r,\ d$ satisfying \eqref{condrd}, we define the following functions:
\begin{equation}\label{functcarl}
\left\{
\begin{array}{llll}
\dis \Theta(t)=\frac{1}{(t(T-t))^4},\quad \forall t\in (0,T),\ \ \ \delta(x):=\dis \lambda\left(\int_{0}^{x}\frac{y}{a(y)}\ dy-d\right),\\
\\
\dis\varphi(t,x):=\Theta(t)\delta(x),\quad \eta(t,x):=\Theta(t)e^{r\sigma(x)},\\
\\
\Psi(x)=\left(e^{r\sigma(x)}-e^{2r\|\sigma\|_\infty}\right),\ \ \ \Phi(t,x):=\Theta(t)\Psi(x).\\
\end{array}
\right.
\end{equation}
Using the second assumption in \eqref{condrd} on $d$, we observe that
$$
\delta(x)<0,\ \ \forall x\in [0,1].
$$
Moreover, we have that 
$\Theta(t)\to +\infty$ as $t$ tends to $0^+$ and $T^-$.  Under assumptions \eqref{condrd} and the choice of the parameter $\lambda$, the weight functions $\varphi$ and $\Phi$ defined by \eqref{functcarl} satisfy the following inequalities which are needed in the sequel:
	
	\begin{equation}\label{ineqphi}
		\begin{array}{llll}
			\dis \frac{4}{3}\Phi\leq \varphi\leq\Phi\ \ \mbox{on}\ Q,\\
			\dis 2\Phi\leq \varphi\ \ \mbox{on}\ Q.
		\end{array}
	\end{equation}	

To prove the forthcoming theorems, we use the following Carleman estimate in the degenerate case proved in \cite{carmelo2020}.

\begin{prop}\label{prp1}$ $
	
	Consider the following system with $G_0,G_1\in L^2(Q)$ and $z_T\in L^2(\Omega)$,
	
	\begin{equation}\label{defz}
		\left\{
		\begin{array}{rllll}
			\dis -z_t-(a(x)z_x)_x&=&G_0+(\beta(x)G_1)_x &\mbox{in}&Q,\\
			z(t,0)=z(t,1)&=&0  &\mbox{on}& (0,T),\\
			z(T,\cdot) &=&z_T  &\mbox{in}&  \Omega.
		\end{array}
		\right.
	\end{equation}
	Assume that \eqref{k} and \eqref{b} are satisfied and let $T>0$ be given. Then, there exist two positive constants $C$ and $s_0$, such that every solution of \eqref{defz} satisfies, for all $s\geq s_0$, the following inequality:
	\begin{eqnarray}\label{ineqcarl}
		\dis\mathcal{I}(z)\leq C\left(\int_{0}^{T}\int_{\O_1}|z|^2e^{2s\varphi}\,\dq
		\dis +\int_{Q}\left(|G_0|^2+s^2\Theta^3\frac{\beta^2(x)}{a(x)}|G_1|^2\right)e^{2s\varphi}\,\dq\right),
	\end{eqnarray}	
	where 
	\begin{equation}\label{I}
		\mathcal{I}(z)=\int_{Q}\left(s^3\Theta^3\frac{x^2}{a(x)}z^2+s\Theta a(x)z_x^2\right)e^{2s\varphi}\, \dq	
	\end{equation}
	and the functions $\Theta$ and $\varphi$ are given by \eqref{functcarl}.

\end{prop}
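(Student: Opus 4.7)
Although this proposition is cited from \cite{carmelo2020}, here is the approach I would take to prove it. The plan is to use the standard conjugation method: set $w(t,x) = e^{s\varphi(t,x)} z(t,x)$ and rewrite the equation $-z_t-(a(x)z_x)_x = G_0 + (\beta(x) G_1)_x$ as $P_s w = e^{s\varphi}\bigl(G_0 + (\beta G_1)_x\bigr)$, where $P_s$ is the conjugated operator. Split $P_s = P_s^+ + P_s^-$ into its symmetric and antisymmetric parts and expand
\begin{equation*}
\|P_s w\|_{L^2(Q)}^2 = \|P_s^+ w\|_{L^2(Q)}^2 + \|P_s^- w\|_{L^2(Q)}^2 + 2(P_s^+ w, P_s^- w)_{L^2(Q)}.
\end{equation*}
The specific choice $\delta_x = \lambda x/a(x)$ is tailored so that $a(x)\varphi_x^2 = \lambda^2 \Theta^2 x^2/a(x)$ and $a\varphi_x = \lambda\Theta x$ is smooth up to $x=0$ despite the degeneracy; this is exactly what forces the weight $x^2/a(x)$ to appear in $\mathcal{I}(z)$ when evaluating $(P_s^+ w, P_s^- w)$.

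The heart of the argument is computing the cross term by repeated integration by parts in both $t$ and $x$. For $s\geq s_0$ large enough, the dominant positive contribution is
\begin{equation*}
\int_Q \left( s^3 \Theta^3 \frac{x^2}{a(x)} w^2 + s \Theta a(x) w_x^2 \right) \dq,
\end{equation*}
which, rewritten in terms of $z = e^{-s\varphi} w$, is precisely $\mathcal{I}(z)$. The boundary traces at $x=0$ are controlled via the Hardy--Poincar\'e inequality \eqref{hardy}, exploiting $a(0)=0$ together with the monotonicity of $x^2/a(x)$ provided by \eqref{k}. For the right-hand side, the contribution $G_0 e^{s\varphi}$ is absorbed by Cauchy--Schwarz, while the gradient source $(\beta G_1)_x e^{s\varphi}$ must be integrated by parts once in $x$, producing terms of the form $\beta G_1 w_x$ and $s\Theta\beta\delta_x G_1 w$; using \eqref{bet} (i.e.\ $\beta^2/a \in L^\infty$) together with Cauchy--Schwarz, these are bounded by $s^2 \Theta^3 (\beta^2/a) G_1^2 e^{2s\varphi}$ on the right, with the remaining pieces absorbed into the two terms of $\mathcal{I}(z)$ for $s$ large.

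Finally, to localize the observation on $\O_1$, I would introduce a cutoff $\xi\in \mathcal{C}^\infty_c(\O_1)$ with $\xi\equiv 1$ on $\O_0$, and complement the above with a non-degenerate Carleman estimate based on the weight $\Phi$. Since $\sigma_x \neq 0$ on $[0,1]\setminus\O_0$ by \eqref{sigma}, a standard Fursikov--Imanuvilov computation away from $x=0$ absorbs integrals on $[0,1]\setminus\O_0$, leaving only $\int_0^T\!\!\int_{\O_1}|z|^2 e^{2s\varphi}\dq$ on the right-hand side; the comparison \eqref{ineqphi} between $\varphi$ and $\Phi$ is then used to unify both weights under $\varphi$. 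The main obstacle is the simultaneous presence of the boundary degeneracy $a(0)=0$ and the gradient source weighted by $\beta$: the compatibility $|\beta(x)|\lesssim \sqrt{a(x)}$ from \eqref{b1} is precisely what makes the $(\beta G_1)_x$ term fit the Carleman absorption scheme, and without it the estimate \eqref{ineqcarl} would not close.
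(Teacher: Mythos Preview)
The paper does not supply a proof of this proposition; it is imported directly from \cite{carmelo2020} and used as a black box (as you yourself note at the outset). Your sketch follows the standard conjugation route for degenerate Carleman estimates: set $w=e^{s\varphi}z$, split the conjugated operator into symmetric and antisymmetric parts, and exploit the identities $a\delta_x=\lambda x$ and $a\delta_x^2=\lambda^2 x^2/a(x)$ so that the cross term, after integration by parts, produces precisely the two weighted integrals in $\mathcal{I}(z)$. Your treatment of the divergence source $(\beta G_1)_x$ by one integration by parts, with the resulting pieces absorbed through $\beta^2/a\in L^\infty$ from \eqref{bet}--\eqref{b1}, is exactly the mechanism behind the extra factor $s^2\Theta^3\beta^2/a$ on the right of \eqref{ineqcarl}. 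This is the argument of \cite{carmelo2020} and its antecedents \cite{alabau2006,cannarsa2008}, so your outline is correct.

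One caveat on the final localization step. You propose gluing with a non-degenerate Carleman built on $\Phi$ and then invoking \eqref{ineqphi} to unify the weights. Since $\varphi\le\Phi$ (hence $e^{2s\varphi}\le e^{2s\Phi}$), this comparison lets you pass from $e^{2s\Phi}$ to $e^{2s\varphi}$ on the \emph{left} of the glued estimate, but the local term coming from the non-degenerate piece would naturally carry the larger weight $e^{2s\Phi}$, and \eqref{ineqphi} goes the wrong way to reduce it to $e^{2s\varphi}$ on the \emph{right}. In the cited references the localization is instead obtained by first proving the degenerate estimate on $(0,1)$ with a trace term at the non-degenerate endpoint $x=1$ (the boundary contribution at $x=0$ vanishes thanks to $a(0)=0$ and the choice $a\varphi_x=\lambda\Theta x$), and then removing that trace by a cutoff/Caccioppoli argument confined to a neighborhood of $x=1$ inside $\O_1$, keeping the single weight $\varphi$ throughout. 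This is a technical refinement of the step you describe and does not affect the overall strategy; note also that in the paper's subsequent use (proof of Theorem~\ref{thm1}) the local term of \eqref{ineqcarl} is in any case immediately absorbed via Hardy--Poincar\'e, so the precise weight on it is immaterial there.
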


Using this latter proposition, we prove the following result.

\begin{prop}\label{prp2}$ $
	
Let us consider
\begin{equation}\label{defz1}
	\left\{
	\begin{array}{rllll}
		\dis -z_t-(a(x)z_x)_x+a_0z&=&G_2+(\beta(x)G_1)_x &\mbox{in}&Q,\\
		z(t,0)=z(t,1)&=&0  &\mbox{on}& (0,T),\\
		z(T,\cdot) &=&z_T  &\mbox{in}&  \Omega,
	\end{array}
	\right.
\end{equation}	
where $G_1,G_2\in L^2(Q)$, $a_0\in L^\infty(Q)$ and $z_T\in L^2(\Omega)$.	

	Assume that \eqref{k} and \eqref{b} are satisfied and let $T>0$ be given. Then, there exist two positive constants $C$ and $s_1$, such that every solution of \eqref{defz1} satisfies, for all $s\geq s_1$, the following inequality:
	
\begin{eqnarray}\label{ineqcarl1}
	\dis\mathcal{I}(z)\leq C\left(\int_{0}^{T}\int_{\O_1}|z|^2e^{2s\varphi}\,\dq
	\dis +\int_{Q}\left(|G_2|^2+s^2\Theta^3\frac{\beta^2(x)}{a(x)}|G_1|^2\right)e^{2s\varphi}\,\dq\right),
\end{eqnarray}	
where $\mathcal{I}(\cdot)$ is given by \eqref{I}.
	
\end{prop}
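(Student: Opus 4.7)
The natural approach is to absorb the zero-order term $a_0 z$ into the source and invoke Proposition \ref{prp1} as a black box. Rewriting \eqref{defz1} as
$$-z_t - (a(x) z_x)_x = (G_2 - a_0 z) + (\beta(x) G_1)_x$$
and applying Proposition \ref{prp1} with source $G_0 := G_2 - a_0 z$, followed by Young's inequality $|G_2 - a_0 z|^2 \leq 2|G_2|^2 + 2\|a_0\|_\infty^2 |z|^2$, yields
\begin{align*}
\mathcal{I}(z) &\leq C\biggl(\int_0^T\!\!\int_{\O_1} |z|^2 e^{2s\varphi}\,\dq + \int_Q |G_2|^2 e^{2s\varphi}\,\dq \\
&\qquad + \|a_0\|_\infty^2 \int_Q |z|^2 e^{2s\varphi}\,\dq + \int_Q s^2\Theta^3 \frac{\beta^2(x)}{a(x)} |G_1|^2 e^{2s\varphi}\,\dq\biggr).
\end{align*}
Three of the four pieces on the right already match the target estimate \eqref{ineqcarl1}; only the zero-order remainder $C\|a_0\|_\infty^2 \int_Q |z|^2 e^{2s\varphi}\,\dq$ must be absorbed into $\mathcal{I}(z)$.

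The main obstacle is this absorption, since the weight $\frac{x^2}{a(x)}$ appearing in $\mathcal{I}(z)$ degenerates at $x=0$, so there is no pointwise bound of the form $|z|^2 \leq C\,\frac{x^2}{a(x)}|z|^2$. To bypass this, I would apply the Hardy–Poincar\'e inequality \eqref{hardy} (admissible under \eqref{k} with $\theta = \tau$, since $x\mapsto a(x)/x^\tau$ is non-increasing on $(0,1]$) to the weighted function $w(t,x) := z(t,x)\,e^{s\varphi(t,x)}$, which still satisfies $w(t,0) = w(t,1) = 0$ and inherits the required regularity in $x$ from $z$. At fixed $t$, expanding $w_x = z_x e^{s\varphi} + s\varphi_x z\, e^{s\varphi}$ gives
$$\int_0^1 \frac{a(x)}{x^2}|z|^2 e^{2s\varphi}\,dx \leq 2\overline{C}\int_0^1 a(x)|z_x|^2 e^{2s\varphi}\,dx + 2\overline{C}\,s^2 \int_0^1 a(x)\,\varphi_x^2\,|z|^2 e^{2s\varphi}\,dx.$$
Using $\frac{a(x)}{x^2}\geq a(1)$ on the left (a direct consequence of Remark \ref{rmq}(a)) and the explicit identity $a(x)\varphi_x^2 = \lambda^2\Theta^2\, \frac{x^2}{a(x)}$ on the right, integration in $t$ gives
$$a(1)\int_Q |z|^2 e^{2s\varphi}\,\dq \leq 2\overline{C}\int_Q a(x)|z_x|^2 e^{2s\varphi}\,\dq + 2\overline{C}\lambda^2 s^2 \int_Q \Theta^2 \frac{x^2}{a(x)}|z|^2 e^{2s\varphi}\,\dq.$$

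Since $\Theta(t)\geq c_T := 256/T^8$ on $[0,T]$, one has $1 \leq s\Theta/(sc_T)$ and $\Theta^2 \leq \Theta^3/c_T$, so both integrals on the right are bounded by a constant (depending on $T$, $\lambda$, $\overline{C}$, $a(1)$) times $\mathcal{I}(z)/s$. This produces
$$\int_Q |z|^2 e^{2s\varphi}\,\dq \leq \frac{C'}{s}\,\mathcal{I}(z),$$
with $C'$ independent of $s$. Taking $s_1$ large enough (depending in addition on $\|a_0\|_\infty$) then absorbs $C\|a_0\|_\infty^2\int_Q|z|^2 e^{2s\varphi}\,\dq$ into the left-hand side $\mathcal{I}(z)$, which yields \eqref{ineqcarl1}. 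The whole argument rests on the Hardy–Poincar\'e inequality to control $|z|^2$ by $|z_x|^2$ with the correct degenerate weight, and on the strict positivity of $\Theta$ on $[0,T]$ to extract the $1/s$ factor needed for absorption.
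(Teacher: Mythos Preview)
Your proof is correct and follows essentially the same route as the paper: rewrite $G_0=G_2-a_0z$, apply Proposition~\ref{prp1}, then control $\int_Q|z|^2e^{2s\varphi}\,\dq$ via the Hardy--Poincar\'e inequality \eqref{hardy} applied to $e^{s\varphi}z$ (using $a(x)/x^2\ge a(1)$ and $a(x)\varphi_x^2=\lambda^2\Theta^2\,x^2/a(x)$), and finally exploit the lower bound on $\Theta$ to absorb the remainder for $s$ large. The paper's argument is identical in substance, differing only in notation for the final absorption step.
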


\begin{proof}
	To prove the inequality \eqref{ineqcarl1}, we apply the relation \eqref{ineqcarl} with $G_0=G_2-a_0z$.
	Hence, there are two positive constants $C$ and $s_0$, such that for all $s\geq s_0$, the following inequality holds:
	\begin{eqnarray*}
		\dis\dis\mathcal{I}(z)\leq C\left(\int_{0}^{T}\int_{\O_1}|z|^2e^{2s\varphi}\,\dq
		\dis +\int_{Q}\left(|G_0|^2+s^2\Theta^3\frac{\beta^2(x)}{a(x)}|G_1|^2\right)e^{2s\varphi}\,\dq\right).
	\end{eqnarray*}
	On the other hand, using Young's inequality, we have 
	$$
	\int_{Q}|G_0|^2e^{2s\varphi}\,\dq\leq 2\left(\int_{Q}|G_2|^2e^{2s\varphi}\,\dq+\|a_0\|^2_{\infty}\int_{Q}|z|^2e^{2s\varphi}\,\dq\right).
	$$
	Now, applying Hardy-Poincar\'e inequality \eqref{hardy} to the function $e^{s\varphi}z$, the fact that $\dis x\longmapsto\frac{x^2}{a(x)}$ is non-decreasing and thanks to the definition of $\varphi$, it follows that
	
	\begin{equation*}
		\begin{array}{rll}
			\dis \int_{Q}|z|^2e^{2s\varphi}\,\dq &\leq&\dis  \frac{1}{a(1)}\int_Q\frac{a(x)}{x^2}|z|^2e^{2s\varphi}\,\dq\\
			&\leq&\dis  \frac{C}{a(1)}\int_Qa(x)\left((e^{s\varphi}z)_x\right)^2\,\dq\\
			&\leq&\dis  \frac{C}{a(1)}\left(\int_Qs^2\lambda^2\Theta^2\frac{x^2}{a(x)}e^{2s\varphi}z^2\,\dq+\int_Qa(x)e^{2s\varphi}z_x^2\,\dq\right).
		\end{array}
	\end{equation*}
	Thus,
	\begin{equation*}
		\begin{array}{rll}
			\dis \int_{Q}|G_0|^2e^{2s\varphi}\,\dq&\leq&\dis 2\int_{Q}|G_2|^2e^{2s\varphi}\,\dq\\
			\dis &&\dis +2\|a_0\|^2_{\infty}\frac{C}{a(1)}\left(\int_Qs^2\lambda^2\Theta^2\frac{x^2}{a(x)}e^{2s\varphi}z^2\,\dq+\int_Qa(x)e^{2s\varphi}z_x^2\,\dq\right).
		\end{array}
	\end{equation*}
	Using the fact that there exists a positive constant $M_3$ such that 
	\begin{equation}\label{theta}
		1\leq M_3\Theta\ \ \mbox{and}\ \ \Theta^2\leq M_3\Theta^3,	
	\end{equation}
	we obtain 
	\begin{equation*}
		\begin{array}{rll}
			\dis \mathcal{I}(z)&\leq&
			 \dis C\left(\int_{0}^{T}\int_{\O_1}|z|^2e^{2s\varphi}\,\dq
			 \dis +\int_{Q}\left(|G_2|^2+s^2\Theta^3\frac{\beta^2(x)}{a(x)}|G_1|^2\right)e^{2s\varphi}\,\dq\right)\\
			&&\dis+C_0 \int_{Q}\left(s^2\Theta^3\frac{x^2}{a(x)}z^2+\Theta a(x)z_x^2\right)e^{2s\varphi}\,\dq.
		\end{array}
	\end{equation*}
	Taking $s\geq s_1=\max(s_0, 2C_0)$, we obtain \eqref{ineqcarl1}. This completes the proof.	
\end{proof}

The next result is the classical Carleman estimate in a suitable interval $(b_1,b_2)\subset [0,1]$ proved in \cite{doubova2002, guerrero2006}.

\begin{prop}\label{cobr}$ $
	
	We consider the following system with  $H,H_0\in L^2(Q_b)$ and $a\in \mathcal{C}^1([b_1;b_2])$ is a strictly positive function,
	\begin{equation}\label{coba}
		\left\{
		\begin{array}{rllll}
			\dis -z_t-(a(x)z_x)_x &=&H_0+H_x &\mbox{in}&Q_b:=(0,T)\times (b_1,b_2),\\
			z(t,b_1)=z(t,b_2)&=&0  &\mbox{on}& (0,T).
		\end{array}
		\right.
	\end{equation}	
	Then, there exist two positive constants $C$ and $s_2$, such that every solution of \eqref{coba} satisfies, for all $s\geq s_2$, the following inequality holds
	
	\begin{eqnarray}\label{cobb}
		\mathcal{K}(z) \leq C\left(\int_{0}^{T}\int_{\O_1}s^3\eta^3|z|^2e^{2s\Phi}\,\dq
		\dis +\int_{Q_b}\left(|H_0|^2+s^2\eta^2|H|^2\right)e^{2s\Phi}\,\dq\right),
	\end{eqnarray}
	where
	\begin{equation}\label{dk}
		\mathcal{K}(z)=\int_{Q_b}(s^3\eta^3z^2+s\eta z_x^2)e^{2s\Phi}\,\dq
	\end{equation}
	and the functions $\eta$ and $\Phi$ are defined as in \eqref{functcarl}.
\end{prop}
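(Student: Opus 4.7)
The plan is to follow the Fursikov--Imanuvilov methodology adapted to the one-dimensional uniformly parabolic operator $\mathcal{L} z := -z_t - (a(x) z_x)_x$, exactly as in \cite{FursikovImanuvilov}. Because $a \in \mathcal{C}^1([b_1,b_2])$ is strictly positive on the closed interval, no degeneracy appears at the boundary of $(b_1,b_2)$ and the computation can be carried out directly on the conjugated unknown $w := e^{s\Phi} z$. Setting $\mathcal{M} w := e^{s\Phi} \mathcal{L}(e^{-s\Phi} w)$, one splits $\mathcal{M} = \mathcal{M}^+ + \mathcal{M}^-$ into its symmetric and antisymmetric parts in $L^2(Q_b)$; the equation becomes $\mathcal{M} w = e^{s\Phi}(H_0 + H_x)$, with $w$ vanishing on $\{b_1, b_2\}$ and at $t \in \{0,T\}$ thanks to the blow-up of $\Theta$ at these endpoints.

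The core of the argument is to expand $\|\mathcal{M} w\|^2 = \|\mathcal{M}^+ w\|^2 + \|\mathcal{M}^- w\|^2 + 2(\mathcal{M}^+ w, \mathcal{M}^- w)$ in $L^2(Q_b)$ and integrate the cross term by parts in $t$ and $x$. Homogeneous Dirichlet conditions on $w$ and the blow-up of $\Phi$ at $\{0,T\}$ annihilate the boundary contributions or give them the correct sign. The dominant positive quantities emerge from the $\Phi_{xx}$-term paired with $w^2$ and from the $\Phi_x^2$-term paired with $w_x^2$: thanks to $\sigma_x \neq 0$ on $[0,1] \setminus \O_0$ and the exponential structure of $\Phi$, these yield a global lower bound by $\int_{Q_b} \bigl(s^3 \eta^3 w^2 + s \eta w_x^2\bigr)\,\dq$ minus a localized contribution supported in $\O_0$. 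Undoing the conjugation restores the weight $e^{2s\Phi}$ and recovers $\mathcal{K}(z)$ on the left-hand side.

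For the right-hand side, $H_0$ is handled by Cauchy--Schwarz against $\mathcal{M}^\pm w$, producing $\int_{Q_b} |H_0|^2 e^{2s\Phi}\,\dq$. The distributional term $H_x$ is treated by integrating by parts \emph{before} expanding the cross product; every derivative of $\Phi$ that falls on the exponential contributes a factor of $s\eta$, so Young's inequality yields $\int_{Q_b} s^2 \eta^2 |H|^2 e^{2s\Phi}\,\dq$ on the right, while the induced $s\eta |w_x|^2$ and $s^2\eta^2 |w|^2$ contributions are reabsorbed into $\mathcal{K}(z)$ provided $s$ exceeds an explicit threshold $s_2$. Finally, to upgrade the local integral from $\O_0$ to $\O_1$, one chooses $\xi \in \mathcal{C}^\infty_c(\O_1)$ with $\xi \equiv 1$ on $\overline{\O_0}$ and tests the equation against $s^3 \eta^3 \xi^2 e^{2s\Phi} z$; after integration by parts the unwanted local quantity is dominated by $\int_0^T \int_{\O_1} s^3 \eta^3 |z|^2 e^{2s\Phi}\,\dq$ plus lower-order pieces absorbable by the dominant left-hand side.

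The main obstacle is the careful bookkeeping of the numerous cross terms and lower-order corrections that arise when expanding $(\mathcal{M}^+ w, \mathcal{M}^- w)$: each one must be majorized by the principal weights $s^3\eta^3$ and $s\eta$, which is what pins down the value $s_2$ in terms of $T$, $\|a\|_{\mathcal{C}^1([b_1,b_2])}$, and $\|\sigma\|_{\mathcal{C}^2}$. The interaction between this decomposition and the distributional right-hand side $H_x$ demands that the integration by parts removing the $\partial_x$ from $H$ be performed in an order compatible with the $\mathcal{M}^+, \mathcal{M}^-$ splitting, so as not to create uncontrolled remainder terms that would spoil the final absorption step.
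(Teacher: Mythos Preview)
The paper does not actually supply a proof of this proposition: it is stated as a known result and attributed to \cite{doubova2002, guerrero2006}. There is therefore no ``paper's own proof'' to compare against; the authors simply import the classical non-degenerate Carleman estimate with a right-hand side in divergence form.

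Your sketch reproduces precisely the Fursikov--Imanuvilov scheme that underlies those references: conjugation $w=e^{s\Phi}z$, symmetric/antisymmetric splitting of the conjugated operator, expansion of the $L^2$ cross term with integration by parts, absorption of lower-order terms for $s\ge s_2$, and handling of the divergence source $H_x$ by a preliminary integration by parts so that only $s^2\eta^2|H|^2e^{2s\Phi}$ survives on the right. This is the standard route and is correct as an outline. The only remark is that your description is a proof \emph{sketch} rather than a full proof: the actual bookkeeping of the cross terms (what you flag as the ``main obstacle'') is exactly the lengthy computation carried out in the cited works, and a self-contained argument would need those details written out. But since the paper itself is content to quote the result, your level of detail already exceeds what the paper provides.
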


Now, we state and prove the last result of this section.

\begin{prop}\label{propcarl2}$ $
	
	We consider the following system
	\begin{equation}\label{carl4}
		\left\{
		\begin{array}{rllll}
			\dis -z_t-(a(x)z_x)_x+a_0z &=&H_1+H_x &\mbox{in}&Q_b:=(0,T)\times (b_1,b_2),\\
			z(t,b_1)=z(t,b_2)&=&0  &\mbox{on}& (0,T),\\
		\end{array}
		\right.
	\end{equation}
where $H,H_0\in L^2(Q_b)$, $a_0\in L^\infty(Q)$ and $a\in \mathcal{C}^0([b_1;b_2])$ is a strictly positive function.
	
	Then, there exist two positive constants $C$ and $s_3$, such that every solution of \eqref{coba} satisfies the following inequality holds for all $s\geq s_3$
	
	\begin{eqnarray}\label{ineqcarl2}
		\mathcal{K}(z) \leq C\left(\int_{0}^{T}\int_{\O_1}s^3\eta^3|z|^2e^{2s\Phi}\,\dq
		\dis +\int_{Q_b}\left(|H_1|^2+s^2\eta^2|H|^2\right)e^{2s\Phi}\,\dq\right),
	\end{eqnarray}
	where $ \mathcal{K}(\cdot)$ is defined in \eqref{dk}.
	
\end{prop}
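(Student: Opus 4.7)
The plan is to mimic the strategy used for Proposition \ref{prp2}, treating the zeroth-order perturbation $a_0 z$ as a source term and absorbing it on the left-hand side for $s$ sufficiently large. Specifically, I would rewrite the equation in \eqref{carl4} as
\begin{equation*}
-z_t - (a(x) z_x)_x = \widetilde{H}_0 + H_x, \qquad \widetilde{H}_0 := H_1 - a_0 z,
\end{equation*}
and apply the non-degenerate Carleman estimate from Proposition \ref{cobr} to this identity. This yields, for every $s \geq s_2$,
\begin{equation*}
\mathcal{K}(z) \leq C\left( \int_0^T\!\!\int_{\mathcal{O}_1} s^3 \eta^3 |z|^2 e^{2s\Phi}\,\dq + \int_{Q_b} \bigl(|\widetilde{H}_0|^2 + s^2 \eta^2 |H|^2 \bigr) e^{2s\Phi}\,\dq \right).
\end{equation*}

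Next, using the elementary inequality $|\widetilde{H}_0|^2 \leq 2|H_1|^2 + 2\|a_0\|_\infty^2 |z|^2$, the perturbation contributes an extra term of the form $2\|a_0\|_\infty^2 \int_{Q_b} |z|^2 e^{2s\Phi}\,\dq$ on the right-hand side. The key observation is that on $Q_b$ the weight $\eta = \Theta(t) e^{r\sigma(x)}$ is bounded below by a strictly positive constant (since $\sigma$ is continuous and $\Theta \geq M_3^{-1}$ by \eqref{theta}), so there exists a constant $C_1 > 0$ such that $1 \leq C_1 s^3 \eta^3$ uniformly on $Q_b$ for $s \geq 1$. Hence
\begin{equation*}
2\|a_0\|_\infty^2 \int_{Q_b} |z|^2 e^{2s\Phi}\,\dq \leq \frac{2 C_1 \|a_0\|_\infty^2}{s^3} \int_{Q_b} s^3 \eta^3 |z|^2 e^{2s\Phi}\,\dq,
\end{equation*}
which can be absorbed into the $s^3 \eta^3 z^2$ contribution of $\mathcal{K}(z)$ on the left-hand side as soon as $s$ is chosen large enough (depending on $\|a_0\|_\infty$ and the constant $C$ from Proposition \ref{cobr}).

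Choosing $s_3 := \max(s_2, s^\ast)$ for a suitable threshold $s^\ast$ determined by the absorption step, we obtain the desired inequality \eqref{ineqcarl2}. I do not anticipate any genuine obstacle here: the proof is a routine perturbation argument paralleling the one given for Proposition \ref{prp2}, with the simplification that no Hardy--Poincaré inequality is needed since the equation is non-degenerate on $(b_1,b_2)$ and $\eta$ is uniformly bounded below on $Q_b$. The only point requiring a bit of care is verifying that the lower bound on $\eta$ is independent of $s$, so that the absorption is controlled purely by choosing $s$ large.
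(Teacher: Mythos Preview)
Your proposal is correct and follows essentially the same approach as the paper: both rewrite the equation with $H_0 = H_1 - a_0 z$, apply Proposition~\ref{cobr}, and absorb the resulting $\|a_0\|_\infty^2 \int_{Q_b} |z|^2 e^{2s\Phi}$ term into $\mathcal{K}(z)$ using that $\eta^{-1}\in L^\infty(Q_b)$ and taking $s$ large. The only cosmetic difference is that the paper bounds $1 \leq C s^2 \eta^3$ (so the extra term is of order $s^2\eta^3$ and is absorbed by the $s^3\eta^3$ part of $\mathcal{K}(z)$ once $s \geq 2C\|a_0\|_\infty^2$), whereas you write the same absorption as a $1/s^3$ factor; the two formulations are equivalent.
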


\begin{proof} 
	To prove the relation \eqref{ineqcarl2}, we apply the relation \eqref{cobb} with $H_0=H_1-a_0z$.
	Therefore, there are two positive constants $C$ and $s_2$, such that for all $s\geq s_2$, we have
	\begin{eqnarray*}\label{}
		\mathcal{K}(z) &\leq& C\left(\int_{0}^{T}\int_{\O_1}s^3\eta^3|z|^2e^{2s\Phi}\,\dq
		\dis +\int_{Q_b}\left(|H_0|^2+s^2\eta^2|H|^2\right)e^{2s\Phi}\,\dq\right)\\
		&+&\dis\|a_0\|^2_{\infty}C\int_{Q_b}|z|^2e^{2s\Phi}\,\dq.
	\end{eqnarray*}
	Observing that $s>1$ and $\eta^{-1}\in L^{\infty}(Q_b)$, it follows from the latter inequality that, there exists $C>0$ such that
		\begin{eqnarray*}\label{}
		\mathcal{K}(z) &\leq& C\left(\int_{0}^{T}\int_{\O_1}s^3\eta^3|z|^2e^{2s\Phi}\,\dq
		\dis +\int_{Q_b}\left(|H_1|^2+s^2\eta^2|H|^2\right)e^{2s\Phi}\,\dq\right)\\
		&+&\dis \|a_0\|^2_{\infty}C\int_{Q_b}s^2\eta^3|z|^2e^{2s\Phi}\,\dq.
	\end{eqnarray*}
	Choosing $s\geq s_3=\max \left(s_2, 2\|a_0\|^2_{\infty}C\right)$ in this latter inequality, we obtain \eqref{ineqcarl2}. This completes the proof.
\end{proof}

\subsection{An intermediate Carleman estimate}

In this section, we establish an observability inequality for the adjoint systems \eqref{rho}-\eqref{psi}. This inequality will allows us to prove the null controllability of system \eqref{yop}-\eqref{pop}.  

Since $\dis \omega_{1,d}=\omega_{2,d}:=\omega_d$ (see \eqref{od}) and if we set $\varrho=\alpha_1\psi^1+\alpha_2\psi^2$, then we can simplify \eqref{rho}-\eqref{psi} as follows
\begin{equation}\label{rhobis}
	\left\{
	\begin{array}{rllll}
		\dis -\rho_{t}-\left(a(x)\rho_{x}\right)_{x}+a_1\rho-(\beta(x)a_2\rho)_x  &=&\dis \varrho\chi_{\omega_{d}}& \mbox{in}& Q,\\
		\dis \rho(t,0)=\rho(t,1)&=&0& \mbox{on}& (0,T), \\
		\dis \rho(T,\cdot)&=&\rho^T &\mbox{in}&\Omega
	\end{array}
	\right.
\end{equation}
and 
\begin{equation}\label{varrho}
	\left\{
	\begin{array}{rllll}
		\dis \varrho_{t}-\left(a(x)\varrho_{x}\right)_{x}+b_1\varrho+\beta(x)b_2\varrho_x   &=&\dis-\left(\frac{\alpha_1}{\mu_1}\chi_{\omega_{1}}+\frac{\alpha_2}{\mu_2}\chi_{\omega_{2}}\right)\rho& \mbox{in}& Q,\\
		\dis \varrho(t,0)=\varrho(t,1)&=&0& \mbox{on}& (0,T), \\
		\dis \varrho(0,\cdot)&=&0 &\mbox{in}&\Omega.
	\end{array}
	\right.
\end{equation}

Before going further, we consider the following result useful for the rest of the paper.

\begin{leme}(Caccioppoli's inequality)\cite{maniar2011}\label{cacc}$ $
	
	Let $\O^\prime$ be a subset of $\O_1$ such that $\O^\prime\Subset\O_1$. Let $\rho$ and $\varrho$ be the solution of \eqref{rhobis} and \eqref{varrho} respectively. Then, there exists a positive constant $C$ such that
	\begin{equation}\label{caccio}
		\int_{0}^{T}\int_{\O^\prime}(\rho^2_{x}+\varrho_{x}^2)\ e^{2s\varphi}\,\dq \leq C\int_0^T\int_{\O_1}s^2\Theta^2(\rho^2+\varrho^2)\ e^{2s\varphi}\ \dq,
	\end{equation}	
	where the weight functions $\varphi$ and $\Theta$ are defined by \eqref{functcarl}.
\end{leme}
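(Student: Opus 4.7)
The plan is to use the classical localized-energy argument for Caccioppoli-type inequalities. Since $\O^\prime \Subset \O_1 \Subset \omega_d\cap\omega \subset (0,1)$, I fix a cutoff $\xi \in \mathcal{C}^{\infty}_c(\O_1)$ with $0\leq \xi\leq 1$ and $\xi\equiv 1$ on $\O^\prime$. The key point is that $\O_1$ is bounded away from the degeneracy point $x=0$, so $a(x)\geq a_\ast > 0$ on $\operatorname{supp}(\xi)$. Moreover, the Carleman weight $e^{2s\varphi}$ vanishes at $t=0$ and $t=T$, because $\delta(x)<0$ on $[0,1]$ and $\Theta(t)\to+\infty$ at both endpoints. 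Hence
$$0 = \int_0^T\frac{d}{dt}\left(\int_\Omega \xi^2 e^{2s\varphi}\rho^2\,dx\right)dt = 2s\int_Q\xi^2\varphi_t e^{2s\varphi}\rho^2\,\dq + 2\int_Q \xi^2 e^{2s\varphi}\rho\rho_t\,\dq,$$
and substituting $\rho_t$ from \eqref{rhobis} yields a usable identity.

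I then integrate by parts in $x$ on the diffusion term $-(a(x)\rho_x)_x$ (the compact support of $\xi$ kills spatial boundary terms), which isolates the good quantity $2\int_Q \xi^2 a(x) e^{2s\varphi}\rho_x^2\,\dq$ and produces cross contributions of the form $\rho\rho_x$, $\rho^2$ and $\rho\varrho\chi_{\omega_d}$. The drift $-(\beta(x)a_2\rho)_x$ is integrated by parts in the same way, generating similar contributions. Every $\rho\rho_x$-type cross term is then handled by Young's inequality $|\rho\rho_x|\leq \varepsilon a(x)\rho_x^2 + C_\varepsilon a(x)^{-1}(\cdots)^2\rho^2$, with $\varepsilon$ small enough to absorb the $\rho_x^2$ factor back on the left. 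The weight coefficients are controlled using $a(x)\varphi_x^2 = \lambda^2 \Theta^2 x^2/a(x) \leq C\Theta^2$ on $\O_1$, the bound $\beta^2/a\leq C$ from \eqref{bet}, and $|s\varphi_t|\leq Cs\Theta^{5/4}\leq Cs^2\Theta^2$ (via the uniform positive lower bound on $\Theta$ and $s\geq 1$). The source term $\rho\varrho\chi_{\omega_d}$, which is supported in $\O_1\subset\omega_d$, is split by Young into $\rho^2+\varrho^2$ pieces. Using $a(x)\geq a_\ast$ and restricting to $\O^\prime$ (where $\xi\equiv 1$) yields the desired bound on $\int_0^T\!\!\int_{\O^\prime} e^{2s\varphi}\rho_x^2\,\dq$.

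For $\varrho$, I repeat the argument with multiplier $\xi^2 e^{2s\varphi}\varrho$ applied to \eqref{varrho}. The equation is now forward in time, but $e^{2s\varphi}$ still vanishes at $t=0$ and $t=T$, so the same telescoping identity is available. The drift $\beta(x)b_2\varrho_x$ is absorbed by Young in exactly the same way as above, while the source $-(\alpha_1/\mu_1\,\chi_{\omega_1}+\alpha_2/\mu_2\,\chi_{\omega_2})\rho$ contributes a pure $\rho^2$ term on the right after Young. Summing the $\rho$ and $\varrho$ estimates produces \eqref{caccio}. The main technical point throughout is the careful matching of the drift contributions $\beta(x)a_2$ and $\beta(x)b_2$ with the diffusion; however, Remark \ref{rmq}(b) keeps $\beta^2/a$ uniformly bounded, so these terms never compete with $a(x)\rho_x^2$ or $a(x)\varrho_x^2$ and the absorption step is routine.
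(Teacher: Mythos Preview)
Your argument is correct and follows the standard localized-energy route for Caccioppoli-type inequalities. Note that the paper itself does not supply a proof of this lemma; it is simply cited from \cite{maniar2011}, so there is no in-house proof to compare against, but your approach is precisely the one that reference uses. One small remark: since $\omega_i\cap\omega=\emptyset$ and $\O_1\Subset\omega$, the source term $-(\alpha_1/\mu_1\,\chi_{\omega_1}+\alpha_2/\mu_2\,\chi_{\omega_2})\rho$ actually vanishes on $\operatorname{supp}(\xi)$, so there is no $\rho^2$ contribution from it at all---but this only simplifies your estimate and does not affect correctness.
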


Now, we state and prove one of the important result of this paper which is the intermediate Carleman inequality for the solutions of systems \eqref{rhobis}-\eqref{varrho}. 

\begin{theorem}\label{thm1}$ $
	
	Assume that the coefficient $a(\cdot)$ verifies \eqref{k}. Then, there exists a  constant $C_1>0$ such that every solution $\rho$ and $\varrho$ of \eqref{rhobis} and \eqref{varrho} respectively, satisfy, for any $s$ large enough, the following inequality
	
	\begin{eqnarray}\label{ineqcarlprinc}
	\mathcal{I}(\rho)+\mathcal{I}(\varrho) \leq C_1\int_{0}^{T}\int_{\O_1}s^3\Theta^3(\rho^2+\varrho^2)e^{2s\Phi}\,\dq,
	\end{eqnarray}
  where the notation $\mathcal{I}(\cdot)$ is defined by \eqref{I}.
\end{theorem}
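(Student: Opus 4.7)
The proof would proceed in two phases: first, deriving partial Carleman estimates for each unknown via Proposition \ref{prp2}, and second, eliminating the remaining ``wrong'' local observations through a cut-off argument combined with Proposition \ref{propcarl2}.

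For Phase 1, I would apply Proposition \ref{prp2} directly to \eqref{rhobis} with $a_0:=a_1$, $G_2:=\varrho\chi_{\omega_d}$ and $G_1:=a_2\rho$. The resulting zeroth-order term $s^2\Theta^3(\beta^2/a)a_2^2\rho^2 e^{2s\varphi}$ is dominated, via \eqref{bet}, by $Cs^2\Theta^3(x^2/a)\rho^2 e^{2s\varphi}$ and absorbed into $\mathcal{I}(\rho)$ for $s$ large, producing
\begin{equation*}
\mathcal{I}(\rho)\le C\int_0^T\!\!\int_{\mathcal{O}_1}\rho^2 e^{2s\varphi}\,\dq+C\int_0^T\!\!\int_{\omega_d}\varrho^2 e^{2s\varphi}\,\dq.
\end{equation*}
For $\varrho$, I would time-reverse via $\hat\varrho(t,x):=\varrho(T-t,x)$ (and analogously $\hat\rho$, $\hat b_i$) to obtain a backward equation, then apply Proposition \ref{prp2} with $G_1:=0$ after moving the transport term $\beta\hat b_2\hat\varrho_x$ into $G_2$. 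Its contribution $\int_Q\beta^2\hat b_2^2\hat\varrho_x^2 e^{2s\varphi}$ is bounded by $C\int_Q a\hat\varrho_x^2 e^{2s\varphi}\le Cs^{-1}\mathcal{I}(\hat\varrho)$ thanks to $|\beta|\le L\sqrt{a}$ from \eqref{b1}, and hence absorbed. Since $\Theta(T-t)=\Theta(t)$ implies $\varphi(T-t,\cdot)=\varphi(t,\cdot)$, reverting time gives
\begin{equation*}
\mathcal{I}(\varrho)\le C\int_0^T\!\!\int_{\mathcal{O}_1}\varrho^2 e^{2s\varphi}\,\dq+C\int_0^T\!\!\int_{\omega_1\cup\omega_2}\rho^2 e^{2s\varphi}\,\dq.
\end{equation*}

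For Phase 2, I would fix a subinterval $(b_1,b_2)\subset(0,1]$ with $b_1>0$ and $\omega_d\cup\omega_1\cup\omega_2\cup\mathcal{O}_0\subset(b_1,b_2)$, an intermediate set $\mathcal{O}_0\Subset\mathcal{O}_1'\Subset\mathcal{O}_1$, and a smooth cut-off $\xi\in\mathcal{C}^2([b_1,b_2])$ with $\xi\equiv 1$ on $\omega_d\cup\omega_1\cup\omega_2$, $\mathrm{supp}(\xi)\Subset(b_1,b_2)$, and $\mathrm{supp}(\xi_x)\subset\mathcal{O}_1'$. The truncations $\tilde\rho:=\xi\rho$ and $\tilde\varrho:=\xi\varrho$ satisfy non-degenerate parabolic equations on $Q_b=(0,T)\times(b_1,b_2)$ with homogeneous Dirichlet conditions at $b_1,b_2$, whose source terms split into the original couplings $\xi\varrho\chi_{\omega_d}$, $\xi\rho\chi_{\omega_i}$ and commutator contributions $(a\xi_x)_x\rho$, $a\xi_x\rho_x$, $\xi_x\beta a_2\rho$ (with analogues for $\varrho$), all supported in $\mathcal{O}_1'$. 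Applying Proposition \ref{propcarl2} to $\tilde\rho$ and $\tilde\varrho$ (the latter after time reversal) and using that $\eta$ is bounded above and below on $(b_1,b_2)$, I would obtain
\[
\int_0^T\!\!\int_{\omega_d}\varrho^2 e^{2s\Phi}\,\dq\le\frac{C}{s^3}\mathcal{K}(\tilde\varrho),\qquad \int_0^T\!\!\int_{\omega_1\cup\omega_2}\rho^2 e^{2s\Phi}\,\dq\le\frac{C}{s^3}\mathcal{K}(\tilde\rho),
\]
while Proposition \ref{propcarl2} bounds $\mathcal{K}(\tilde\rho)$ and $\mathcal{K}(\tilde\varrho)$ above by local $\mathcal{O}_1$-terms, commutator contributions dominated by $\int_0^T\!\int_{\mathcal{O}_1'}(\rho^2+\rho_x^2+\varrho^2+\varrho_x^2)e^{2s\Phi}\,\dq$, and the same cross observations that appear on the left. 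For $s$ large, the cross terms on both sides absorb into each other; the gradient terms are then controlled by a $\Phi$-weighted version of the Caccioppoli inequality (same proof as Lemma \ref{cacc}), yielding a bound by $C\int_0^T\!\int_{\mathcal{O}_1}s^2\Theta^2(\rho^2+\varrho^2)e^{2s\Phi}\,\dq$. Using $e^{2s\varphi}\le e^{2s\Phi}$ (which follows from $\varphi\le\Phi$ in \eqref{ineqphi}), substituting back into the sum of the two partial estimates from Phase 1 gives \eqref{ineqcarlprinc}.

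The main technical obstacle is the cut-off step of Phase 2: the supports of $\xi$ and $\xi_x$ must be coordinated so that commutator contributions are confined to $\mathcal{O}_1'$ while the coupling cross-terms absorb into one another at large $s$. A secondary challenge is the convection term $\beta b_2\varrho_x$ in \eqref{varrho}, which does not fit the form of Proposition \ref{prp2} directly and must be handled through the growth bound \eqref{b1} on $\beta$. $\hfill\blacksquare$
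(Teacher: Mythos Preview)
Your Phase~2 contains a geometric impossibility. You require a cut-off $\xi$ with $\xi\equiv 1$ on $\omega_d\cup\omega_1\cup\omega_2$, $\mathrm{supp}(\xi)\Subset(b_1,b_2)$, and $\mathrm{supp}(\xi_x)\subset\mathcal{O}_1'\Subset\mathcal{O}_1$. But the paper's standing hypothesis is $\omega_i\cap\omega=\emptyset$ for $i=1,2$, while $\mathcal{O}_1'\Subset\mathcal{O}_1\Subset\omega_d\cap\omega\subset\omega$. Hence $\omega_1,\omega_2$ lie outside $\mathcal{O}_1'$; since $\xi$ must be constant on each connected component of $(b_1,b_2)\setminus\mathcal{O}_1'$ and vanish near $b_1,b_2$, it is identically zero on those components and cannot equal $1$ on $\omega_1\cup\omega_2$. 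The same obstruction applies if any part of $\omega_d$ lies outside $\mathcal{O}_1'$. So the commutator localisation you need is unavailable, and the cross-observation $\int_{\omega_1\cup\omega_2}\rho^2 e^{2s\varphi}$ cannot be eliminated this way.

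The fix is that Phase~2 is unnecessary: the cross-terms from Phase~1 can be absorbed on the left via Hardy--Poincar\'e \eqref{hardy}. Indeed, applying \eqref{hardy} to $e^{s\varphi}\varrho$ and using $a\varphi_x^2=\lambda^2\Theta^2 x^2/a$ together with \eqref{theta} gives $\int_Q\varrho^2 e^{2s\varphi}\,\dq\le Cs^{-1}\mathcal{I}(\varrho)$, and likewise for $\rho$. Adding your two Phase~1 estimates and taking $s$ large then yields $\mathcal{I}(\rho)+\mathcal{I}(\varrho)\le C\int_{\mathcal{O}_1}(\rho^2+\varrho^2)e^{2s\varphi}$, and \eqref{ineqphi} plus $1\le M_3\Theta$ upgrades the right-hand side to the form in \eqref{ineqcarlprinc}. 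This is exactly how the paper handles the coupling (see \eqref{ine1}--\eqref{i4}), though the paper embeds it in a different architecture: rather than applying Proposition~\ref{prp2} globally, it first splits $\rho=\xi\rho+(1-\xi)\rho$ with $\xi\equiv1$ near $x=0$ and $\xi\equiv0$ away from $0$, applies the degenerate estimate \eqref{ineqcarl1} to the piece near the degeneracy and the classical estimate \eqref{ineqcarl2} to the piece supported in $[\alpha,1]$, and then reassembles via \eqref{est}. In both routes the coupling is killed by Hardy--Poincar\'e absorption, not by a second localisation step.
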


\begin{proof}
Let us choose an arbitrary open subset $\O^\prime:=(\alpha,\beta)$ such that $\O^\prime\Subset\O_1$ and consider the smooth cut-off function $\xi:\R\to\R$ defined as follows
	\begin{equation}\label{cutoff}
	\left\{
	\begin{array}{llll}
	\dis 0\leq \xi\leq 1, \quad x\in \Omega,\\
	\dis \xi(x)=1, \quad x\in [0,\alpha],\\
	\dis \xi(x)=0, \quad x\in [\beta,1].
	\end{array}
	\right.
	\end{equation}
	Let $\widetilde{\rho}=\xi \rho$ and $\widetilde{\varrho}=\xi \varrho$ where $(\rho, \varrho)$ is the solution of \eqref{rhobis}-\eqref{varrho}. Then, $\widetilde{\rho}$ and $\widetilde{\varrho}$  satisfy the following systems
\begin{equation}\label{rhobist}
	\left\{
	\begin{array}{rllll}
		\dis -\widetilde{\rho}_{t}-\left(a(x)\widetilde{\rho}_{x}\right)_{x}+a_1\widetilde{\rho}-(\beta(x)a_2\widetilde{\rho})_x  &=&\dis \widetilde{\mathcal{G}}_1& \mbox{in}& Q,\\
		\dis \widetilde{\rho}(t,0)=\widetilde{\rho}(t,1)&=&0& \mbox{on}& (0,T), \\
		\dis \widetilde{\rho}(T,\cdot)&=&\widetilde{\rho}_2^T &\mbox{in}&\Omega
	\end{array}
	\right.
\end{equation}
and 
\begin{equation}\label{varrhot}
	\left\{
	\begin{array}{rllll}
		\dis \widetilde{\varrho}_{t}-\left(a(x)\widetilde{\varrho}_{x}\right)_{x}+b_1\widetilde{\varrho}+\beta(x)b_2\widetilde{\varrho}_x   &=&\dis\widetilde{\mathcal{G}}_2& \mbox{in}& Q,\\
		\dis \widetilde{\varrho}(t,0)=\widetilde{\varrho}(t,1)&=&0& \mbox{on}& (0,T), \\
		\dis \widetilde{\varrho}_2(0,\cdot)&=&0 &\mbox{in}&\Omega,
	\end{array}
	\right.
\end{equation}
where 
$$
\widetilde{\mathcal{G}}_1=\widetilde{\varrho}\chi_{\omega_d}-\left(a(x)\xi_x\ \rho\right)_{x}-\xi_x\  a(x)\rho_{x}-\beta(x)a_2\xi_x\rho
$$
and
$$\dis \widetilde{\mathcal{G}}_2=-\left(\frac{\alpha_1}{\mu_1}\chi_{\omega_{1}}+\frac{\alpha_2}{\mu_2}\chi_{\omega_{2}}\right)\widetilde{\rho}-\left(a(x)\xi_x\ \varrho\right)_{x}-\xi_x\  a(x)\varrho_{x}+\beta(x)b_2\xi_x\varrho
.$$

Applying the Carleman estimate \eqref{ineqcarl1} to system \eqref{rhobist} with $G_1=a_2\widetilde{\rho}$ and $G_2=\widetilde{\mathcal{G}}_1$, we obtain
	\begin{eqnarray}\label{ineqcarl3rho}
		\mathcal{I}(\widetilde{\rho})
		&\leq&C \dis \int_{Q}\left[|\widetilde{\varrho}\chi_{\omega_d}|^2+|\left(a(x)\xi_x\ \rho\right)_{x}+\xi_x\  a(x)\rho_{x}|^2+|\beta(x)a_2\xi_x\rho|^2\right] e^{2s\varphi}\,\dq\nonumber\\
		&+&C\int_{0}^{T}\int_{\O_1}e^{2s\varphi}|\widetilde{\rho}|^2\,\dq+C\int_{Q}s^2\Theta^3\frac{\beta^2(x)}{a(x)}|a_2\widetilde{\rho}|^2e^{2s\varphi}\,\dq.
	\end{eqnarray}
	From the definition of $\xi$, we have
	\begin{eqnarray}\label{ine}
	\int_{Q} |(a(x)\xi_x{\rho})_x+a(x)\xi_x{\rho}_{x}|^2 e^{2s\varphi}\,\dq&=& \int_{Q} ((a(x)\xi_x)_x{\rho}+2a(x)\xi_x{\rho}_{x})^2 e^{2s\varphi}\,\dq\nonumber\\
	&\leq& \int_{Q}\left[2((a(x)\xi_x)_x)^2{|\rho|}^2+8(a(x)\xi_x)^2|\rho_{x}|^2\right] e^{2s\varphi}\,\dq\nonumber\\
	&\leq& C\int_0^T\int_{\O^\prime}(|\rho|^2+|\rho_{x}|^2)\ e^{2s\varphi}\,\dq.
	\end{eqnarray}
	In the other hand, using the fact that $\dis x\mapsto \frac{x^2}{a(x)}$ is non-decreasing on $(0,1]$, thanks to Hardy-Poincar\'e inequality \eqref{hardy} applied to the function $e^{s\varphi}\widetilde{\varrho}$ and using the definition of $\varphi$, we get
	\begin{eqnarray*}
		\int_{Q}|\widetilde{\varrho}\chi_{\omega_d}|^2e^{2s\varphi}\, \dq
		&\leq& \frac{1}{a(1)}\int_{Q} \frac{a(x)}{x^2}\widetilde{\varrho}^2e^{2s\varphi}\, \dq\\
		&\leq& \frac{\overline{C}}{a(1)}\int_{Q}a(x)|(\widetilde{\varrho}\ e^{s\varphi})_x|^2\, \dq\\
		&\leq& C\left(\int_{Q}a(x)\widetilde{\varrho}_{x}^2e^{2s\varphi}\, \dq+\int_{Q}s^2\Theta^2\frac{x^2}{a(x)}\widetilde{\varrho}^2e^{2s\varphi}\ \dq\right).
	\end{eqnarray*}
	Using \eqref{theta}, we obtain 
	\begin{eqnarray}\label{ine1}
	\int_{Q}|\widetilde{\varrho}\chi_{\omega_d}|^2e^{2s\varphi}\, \dq\leq  C\left(\int_{Q}\Theta a(x)\widetilde{\varrho}_{x}^2e^{2s\varphi}\, \dq+\int_{Q}s^2\Theta^3\frac{x^2}{a(x)}\widetilde{\varrho}^2e^{2s\varphi}\ \dq\right).
	\end{eqnarray}
Proceeding as for \eqref{ine1}, one obtains 
\begin{eqnarray}
	\int_{0}^{T}\int_{\O_1}e^{2s\varphi}|\widetilde{\rho}|^2\, \dq\leq  C\left(\int_{Q}\Theta a(x)\widetilde{\rho}_{x}^2e^{2s\varphi}\, \dq+\int_{Q}s^2\Theta^3\frac{x^2}{a(x)}\widetilde{\rho}^2e^{2s\varphi}\ \dq\right).
\end{eqnarray}
We have
\begin{eqnarray*}
	\int_{Q}s^2\Theta^3\frac{\beta^2(x)}{a(x)}|a_2\widetilde{\rho}|^2e^{2s\varphi}\,\dq\leq \|a_2\|^2_{\infty}\int_{Q}s^2\Theta^3\frac{\beta^2(x)}{a(x)}|\widetilde{\rho}|^2e^{2s\varphi}\,\dq.
\end{eqnarray*}
Using \eqref{bet}, we have
\begin{equation*}
	\frac{\beta^2(x)}{a(x)}	\leq C\frac{x^2}{a(x)},
\end{equation*}
and therefore
\begin{eqnarray}
	\int_{Q}s^2\Theta^3\frac{\beta^2(x)}{a(x)}|a_2\widetilde{\rho}|^2e^{2s\varphi}\,\dq\leq C(|a_2\|_{\infty})\int_{Q}s^2\Theta^3\frac{x^2}{a(x)}|\widetilde{\rho}|^2e^{2s\varphi}\,\dq.
\end{eqnarray}
Using the fact that $\dis |\beta(x)|\leq L\sqrt{a(x)}$ (see \eqref{b1}) and the definition of $\xi$, we obtain
\begin{eqnarray*}
\int_{Q}|\beta(x)a_2\xi_x\rho|^2 e^{2s\varphi}\,\dq	&\leq&\|a_2\|^2_{\infty}L^2\int_{Q}a(x)\xi_x^2|\rho|^2 e^{2s\varphi}\,\dq\\
&\leq&\|a_2\|^2_{\infty}L^2\int_{0}^{T}\int_{\O^\prime}a(x)
|\rho|^2 e^{2s\varphi}\,\dq.	
\end{eqnarray*}
Due to the fact that $a(x)$ is bounded for all $x\in \O^\prime$, we deduce
\begin{eqnarray}\label{ca}
	\int_{Q}|\beta(x)a_2\xi_x\rho|^2 e^{2s\varphi}\,\dq	
	\leq C(\|a_2\|_{\infty})\int_{0}^{T}\int_{\O^\prime}
	|\rho|^2 e^{2s\varphi}\,\dq.	
\end{eqnarray}

	Combining \eqref{ineqcarl3rho}-\eqref{ca}, we obtain
	\begin{eqnarray}\label{i1}
	\mathcal{I}(\widetilde{\rho})
	\leq  C\int_0^T\int_{\O^\prime}(\rho^2+\rho_{x}^2)e^{2s\varphi}\,\dq 
	+C\int_{Q}\left(\Theta a(x)\widetilde{\varrho}_{x}^2+s^2\Theta^3\frac{x^2}{a(x)}\widetilde{\varrho}^2\right)e^{2s\varphi}\ \dq\nonumber\\
	+C\int_{Q}\left(\Theta a(x)\widetilde{\rho}_{x}^2+s^2\Theta^3\frac{x^2}{a(x)}\widetilde{\rho}^2\right)e^{2s\varphi}\ \dq.
	\end{eqnarray}
	Arguing in the same way as in \eqref{i1} with $\widetilde{\varrho}$ solution of \eqref{varrhot}, we get
	\begin{eqnarray}\label{i4}
		\mathcal{I}(\widetilde{\varrho})
		\leq  C\int_0^T\int_{\O^\prime}(\varrho^2+\varrho_{x}^2)e^{2s\varphi}\,\dq 
		+C\int_{Q}\left(\Theta a(x)\widetilde{\varrho}_{x}^2+s^2\Theta^3\frac{x^2}{a(x)}\widetilde{\varrho}^2\right)e^{2s\varphi}\ \dq\nonumber\\
		+C\int_{Q}\left(\Theta a(x)\widetilde{\rho}_{x}^2+s^2\Theta^3\frac{x^2}{a(x)}\widetilde{\rho}^2\right)e^{2s\varphi}\ \dq.
	\end{eqnarray}
	Adding \eqref{i1} and \eqref{i4} and taking $s$ large enough, we obtain
		
	\begin{eqnarray*}
		\dis \mathcal{I}(\widetilde{\rho})+\mathcal{I}(\widetilde{\varrho}) \leq C\int_0^T\int_{\O^\prime}(\rho^2+\varrho^2+\rho^2_{x}+\varrho^2_{x})e^{2s\varphi}\,\dq.
	\end{eqnarray*}
	Using Caccioppoli's inequality \eqref{caccio}, this latter estimate becomes
	
	\begin{eqnarray}\label{i5}
		\dis \mathcal{I}(\widetilde{\rho})+\mathcal{I}(\widetilde{\varrho}) \leq C\int_0^T\int_{\O_1}s^2\Theta^2(\rho^2+\varrho^2)e^{2s\varphi}\,\dq.
	\end{eqnarray}
	
Now let $\overline{\rho}=\vartheta\rho$ and $\overline{\varrho}= \vartheta\varrho$ where $\vartheta=1-\xi$. Then, the supports of $\overline{\rho}$ and $\overline{\varrho}$ are contained in $[0,T]\times[\alpha,1]$ and satisfy
\begin{equation}\label{rhobisb}
	\left\{
	\begin{array}{rllll}
		\dis -\overline{\rho}_{t}-\left(a(x)\overline{\rho}_{x}\right)_{x}+a_1\overline{\rho}-(\beta(x)a_2\overline{\rho})_x &=&\dis\overline{\mathcal{G}}_1& \mbox{in}& Q_{\alpha}=(0,T)\times(\alpha,1),\\
		\dis \overline{\rho}(t,0)=\overline{\rho}(t,1)&=&0& \mbox{on}& (0,T), \\
		\dis \overline{\rho}_1^T,\ \ \overline{\rho}(T,\cdot)&=&\overline{\rho}^T &\mbox{in}&\Omega,
	\end{array}
	\right.
\end{equation}
and 
\begin{equation}\label{varrhob}
	\left\{
	\begin{array}{rllll}
		\dis \overline{\varrho}_{t}-\left(a(x)\overline{\varrho}_{x}\right)_{x}+b_1\overline{\varrho}+\beta(x)b_2\overline{\varrho}_{x}   &=&\dis\overline{\mathcal{G}}_2& \mbox{in}& Q_{\alpha},\\
		\dis \overline{\varrho}(t,0)=\overline{\varrho}(t,1)&=&0& \mbox{on}& (0,T), \\
		\dis \overline{\varrho}(0,\cdot)&=&0 &\mbox{in}&\Omega,
	\end{array}
	\right.
\end{equation}
where 
$$
\overline{\mathcal{G}}_1=\overline{\varrho}\chi_{\O_d}-\left(a(x)\vartheta_x\ \rho\right)_{x}-\vartheta_x\  a(x)\rho_{x}-\beta(x)a_2\vartheta_x\rho
$$
and
$$\dis \overline{\mathcal{G}}_2=-\left(\frac{\alpha_1}{\mu_1}\chi_{\omega_{1}}+\frac{\alpha_2}{\mu_2}\chi_{\omega_{2}}\right)\overline{\rho}-\left(a(x)\vartheta_x\ \varrho\right)_{x}-\vartheta_x\  a(x)\varrho_{x}+\beta(x)b_2\xi_x\varrho
.$$

Since on $Q_{\alpha}$ all the above systems are non degenerate, applying the classical Carleman estimate \eqref{ineqcarl2}
 to system \eqref{rhobisb} with $H=\beta(x)a_2\overline{\rho}$ and $H_1=\overline{\mathcal{G}}_1$, one obtains
\begin{eqnarray}\label{cl}
	\mathcal{K}(\overline{\rho})
	&\leq& C \dis \int_{Q}\left[|\overline{\varrho}\chi_{\omega_d}|^2+|(a(x)\vartheta_x{\rho})_x+a(x)\vartheta_x{\rho}_{x}|^2+|\beta(x)a_2\vartheta_x\rho|^2\right] e^{2s\Phi}\,\dq\nonumber\\
	&+&C\int_0^T\int_{\O_1}s^3\Theta^3e^{2s\Phi}|\overline{\rho}|^2\,\dq+C\int_{Q}s^2\eta^2e^{2s\Phi}|\beta(x)a_2\overline{\rho}|^2\,\dq,
\end{eqnarray}
because $\dis\eta(t,x):=\Theta(t)e^{r\sigma(x)}\leq \Theta(t)e^{r\|\sigma(x)\|_{\infty}}$.

Using the definition of the function $\vartheta$, we have
\begin{eqnarray}\label{}
	\int_{Q} |(a(x)\vartheta_x{\rho})_x+a(x)\vartheta_x{\rho}_{x}|^2 e^{2s\Phi}\,\dq&=& \int_{Q} ((a(x)\vartheta_x)_x{\rho}+2a(x)\vartheta_x{\rho}_{x})^2 e^{2s\Phi}\,\dq\nonumber\\
	&\leq& \int_{Q}\left[2((a(x)\vartheta_x)_x)^2{\rho}^2+8(a(x)\vartheta_x)^2{\rho}_{x}^2\right] e^{2s\Phi}\,\dq\nonumber\\
	&\leq& C\int_0^T\int_{\O^\prime}(\rho^2+\rho_{x}^2)\ e^{2s\Phi}\,\dq.
\end{eqnarray}
On the other hand, since $\dis x\mapsto \frac{x^2}{a(x)}$ is non-decreasing on $(0,1]$ and thanks to Hardy-Poincar\'e inequality \eqref{hardy} to the function $e^{s\Phi}\overline{\varrho}$, we get
\begin{eqnarray*}
	\int_{Q}|\overline{\varrho}\chi_{\omega_d}|^2e^{2s\Phi}\, \dq
	&\leq& \frac{1}{a(1)}\int_{Q} \frac{a(x)}{x^2}\overline{\varrho}^2e^{2s\Phi}\, \dq\\
	&\leq& \frac{\overline{C}}{a(1)}\int_{Q}a(x)|(\overline{\varrho}\ e^{s\Phi})_x|^2\, \dq\\
&\leq& C\int_{Q}\left(a(x)\overline{\varrho}_{x}^2+a(x)s^2\eta^2\overline{\varrho}^2\right)e^{2s\Phi}\ \dq.
\end{eqnarray*}
We have $\overline{\rho}=\vartheta\rho$ and since $\vartheta$ is bounded on $\O_1$, we deduce that
\begin{eqnarray}
\int_0^T\int_{\O_1}s^3\Theta^3e^{2s\Phi}|\overline{\rho}|^2\,\dq	\leq\int_0^T\int_{\O_1}s^3\Theta^3e^{2s\Phi}|\rho|^2\,\dq.
\end{eqnarray}
Due to \eqref{theta}, the fact that $a\in \mathcal{C}([\alpha;1])$ and $\eta^{-1}\in L^\infty(Q)$, we get
\begin{eqnarray}\label{i8}
\int_{Q}\overline{\varrho}^2e^{2s\Phi}\, \dq\leq  C\int_{Q}\left(\eta \overline{\varrho}_{x}^2+s^2\eta^3\overline{\varrho}^2\right)e^{2s\Phi}\ \dq.
\end{eqnarray}
Using the fact that $\dis |\beta(x)|\leq L\sqrt{a(x)}$ (see Remark \ref{rmq}) and the definition of $\vartheta$, we have
\begin{eqnarray*}
	\int_{Q}|\beta(x)a_2\vartheta_x\rho|^2 e^{2s\Phi}\,\dq	&\leq&\|a_2\|^2_{\infty}L^2\int_{Q}a(x)\vartheta_x^2|\rho|^2 e^{2s\Phi}\,\dq\\
	&\leq&\|a_2\|^2_{\infty}L^2\int_{0}^{T}\int_{\O^\prime}a(x)
	|\rho|^2 e^{2s\Phi}\,\dq.	
\end{eqnarray*}
As the coefficient $a(\cdot)$ is bounded in $\O^\prime$, then we deduce
\begin{eqnarray}\label{}
	\int_{Q}|\beta(x)a_2\vartheta_x\rho|^2 e^{2s\Phi}\,\dq	
	\leq C(\|a_2\|_{\infty})\int_{0}^{T}\int_{\O^\prime}
	|\rho|^2 e^{2s\Phi}\,\dq.	
\end{eqnarray}
Since $\dis |\beta(x)|\leq L\sqrt{a(x)}$, one obtains
\begin{eqnarray*}
	\int_{Q}s^2\eta^2e^{2s\Phi}|\beta(x)a_2\overline{\rho}|^2\,\dq
	\leq\|a_2\|^2_{\infty}L^2\int_{Q}s^2\eta^2a(x)
	e^{2s\Phi}|\overline{\rho}|^2\,\dq.	
\end{eqnarray*}
Using the fact that $a$ is bounded in $(0,1]$ and $\eta^{-1}\in L^\infty(Q)$ we deduce
\begin{eqnarray}\label{i7}
	\int_{Q}s^2\eta^2e^{2s\Phi}|\beta(x)a_2\overline{\rho}|^2\,\dq
	\leq C(\|a_2\|_{\infty})\int_{Q}s^2\eta^3
	e^{2s\Phi}|\overline{\rho}|^2\,\dq	.	
\end{eqnarray}
	Combining \eqref{cl}-\eqref{i7}, we obtain
	
	\begin{eqnarray}\label{cm}
	\mathcal{K}(\overline{\rho})
	\leq  C\int_0^T\int_{\O^\prime}(\rho^2+\rho_{x}^2)e^{2s\Phi}\,\dq+C\int_0^T\int_{\O_1}s^3\Theta^3\rho^2 e^{2s\Phi}\,\dq\nonumber\\
	+C\int_{Q}\left(\eta \overline{\varrho}_{x}^2+s^2\eta^3\overline{\varrho}^2\right)e^{2s\Phi}\ \dq+C\int_{Q}s^2\eta^3\overline{\rho}^2e^{2s\Phi}\ \dq.
	\end{eqnarray}	
Arguing as in \eqref{cm} to $\overline{\varrho}$ solution of \eqref{varrhob}, we obtain
	\begin{eqnarray}\label{cmc}
	\mathcal{K}(\overline{\varrho})
	\leq  C\int_0^T\int_{\O^\prime}(\varrho^2+\varrho_{x}^2)e^{2s\Phi}\,\dq+C\int_0^T\int_{\O_1}s^3\Theta^3\varrho^2 e^{2s\Phi}\,\dq\nonumber\\
	+C\int_{Q}\left(\eta \overline{\rho}_{x}^2+s^2\eta^3\overline{\rho}^2\right)e^{2s\Phi}\ \dq+C\int_{Q}\eta\overline{\varrho}_x^2e^{2s\Phi}\ \dq.
\end{eqnarray}
	Combining \eqref{cm}-\eqref{cmc}, we obtain for $s$ large enough 
	
	\begin{eqnarray*}
		\dis \mathcal{K}(\overline{\rho})+\mathcal{K}(\overline{\varrho}) \leq C\int_0^T\int_{\O^\prime}(\rho^2+\varrho^2+\rho^2_{x}+\varrho^2_{x})e^{2s\Phi}\,\dq
		+C\int_0^T\int_{\O_1}s^3\Theta^3(\rho^2+\varrho^2)e^{2s\Phi}\,\dq.
	\end{eqnarray*}
	Using Caccioppoli's inequality \eqref{caccio}, this latter estimate becomes
	\begin{eqnarray}\label{cmd}
		\dis \mathcal{K}(\overline{\rho})+\mathcal{K}(\overline{\varrho}) \leq C\int_0^T\int_{\O_1}s^3\Theta^3(\rho^2+\varrho^2)e^{2s\Phi}\,\dq.
	\end{eqnarray}	
Thanks to \eqref{ineqphi}, the fact that $a\in \mathcal{C}^1([\alpha,1])$ and the function $\dis x\longmapsto\frac{x^2}{a(x)}$ is non-decreasing on $(0,1]$, one can prove the existence of a constant $C>0$  such that for all $(t,x)\in (0,T)\times [\alpha,1]$, we have
\begin{equation}\label{est}
\begin{array}{llll}
\dis e^{2s\varphi}\leq e^{2s\Phi},\ \ \ \, \frac{x^2}{a(x)}e^{2s\varphi}\leq Ce^{2s\Phi},\ \ \ \, a(x)e^{2s\varphi}\leq Ce^{2s\Phi}.
\end{array}
\end{equation}
Using \eqref{est}, the inequality \eqref{cmd} becomes
	
	\begin{eqnarray}\label{i17}
	\dis \mathcal{I}(\overline{\rho})+\mathcal{I}(\overline{\varrho}) \leq C\int_0^T\int_{\O_1}s^3\Theta^3(\rho^2+\varrho^2)e^{2s\Phi}\,\dq.
\end{eqnarray}	
Combining \eqref{i17} with \eqref{i5}, and using the fact that $e^{2s\varphi}\leq e^{2s\Phi}$, we obtain	
	\begin{eqnarray}\label{i18}
	\dis \mathcal{I}(\overline{\rho}+\widetilde{\rho})+\mathcal{I}(\overline{\varrho}+\widetilde{\varrho}) \leq C\int_0^T\int_{\O_1}s^3\Theta^3(\rho^2+\varrho^2)e^{2s\Phi}\,\dq.
\end{eqnarray}		
Using the fact that $\varrho=\widetilde{\varrho}+\overline{\varrho}$ and $\rho=\widetilde{\rho}+\overline{\rho}$, then we have 
\begin{equation}\label{pat}
	\begin{array}{rll}
	\dis	|\varrho|^2\leq 2\left(|\widetilde{\varrho}|^2+|\overline{\varrho}|^2\right),\ |\rho|^2\leq 2\left(|\widetilde{\rho}|^2+|\overline{\rho}|^2\right),\\
		\\
		 \dis |\varrho_{x}|^2\leq 2\left(|\widetilde{\varrho}_{x}|^2+|\overline{\varrho}_{x}|^2\right),\ |\rho_{x}|^2\leq 2\left(|\widetilde{\rho}_{x}|^2+|\overline{\rho}_{x}|^2\right). 
	\end{array}
\end{equation}
Combining \eqref{i18} with \eqref{pat}, we obtain the existence of a constant $C_1>0$ such that	
\begin{eqnarray*}
	\mathcal{I}(\rho)+\mathcal{I}(\varrho) \leq C_1\int_{0}^{T}\int_{\O_1}s^3\Theta^3(\rho^2+\varrho^2)e^{2s\Phi}\,\dq.
\end{eqnarray*}
This completes the proof of Theorem \ref{thm1}.
\end{proof}

\subsection{An observability inequality result}

This part is devoted to the observability inequality of systems \eqref{rhobis}-\eqref{varrho}. This inequality is obtained by using the intermediate Carleman estimate \eqref{ineqcarlprinc}.

\begin{prop} \label{prop5}  $ $
	
	Under the assumptions of Theorem \ref{thm1}, there exists a constant $C_2>0$, such that every solution $(\rho, \varrho)$ to \eqref{rhobis}-\eqref{varrho}, satisfies, for $s$ large enough, the following inequality:
		
	\begin{eqnarray}\label{obser}
		\mathcal{I}(\rho)+\mathcal{I}(\varrho) \leq C_2s^{7}\int_{0}^{T}\int_{\omega}|\rho|^2\,\dq,
	\end{eqnarray}
	where the notation $\mathcal{I}(\cdot)$ is defined by \eqref{I}.
	
\end{prop}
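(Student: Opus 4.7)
The plan is to eliminate the $\varrho^2$ integral on the right-hand side of \eqref{ineqcarlprinc} and replace it by a local integral involving only $\rho^2$ over $\omega$. The key geometric observation is that $\mathcal{O}_1 \Subset \omega_d \cap \omega$, so on $\mathcal{O}_1$ the first equation of \eqref{rhobis} reduces to the pointwise identity
\begin{equation*}
\varrho = -\rho_t - (a(x)\rho_x)_x + a_1\rho - (\beta(x)a_2\rho)_x,
\end{equation*}
while the equation \eqref{varrho} for $\varrho$ has no source there, since $\omega \cap \omega_i = \emptyset$.

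First I would fix an auxiliary open set $\mathcal{O}_2$ with $\mathcal{O}_1 \Subset \mathcal{O}_2 \Subset \omega_d \cap \omega$ and a cut-off $\zeta \in \mathcal{C}^2_c(\mathcal{O}_2)$ equal to $1$ on $\mathcal{O}_1$. Setting $A := \zeta\, s^3 \Theta^3 e^{2s\Phi}$ and substituting the identity above for one factor of $\varrho$, I would write
\begin{equation*}
\int_0^T\!\!\int_{\mathcal{O}_1}\! s^3 \Theta^3 \varrho^2 e^{2s\Phi}\, \dq \;\leq\; \int_Q A\, \varrho\, \bigl[-\rho_t - (a\rho_x)_x + a_1\rho - (\beta a_2 \rho)_x\bigr]\, \dq.
\end{equation*}

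Next, I would integrate by parts in $x$ and $t$ to transfer all derivatives off $\rho$; the compact support of $\zeta$ kills all boundary contributions, both in time (thanks to the decay of $e^{2s\Phi}$ as $t\to 0^+, T^-$) and in space. Whenever a factor $\varrho_t$ appears after integration of the $\rho_t$ piece, I would substitute it by $(a\varrho_x)_x - b_1\varrho - \beta b_2 \varrho_x$, which is legitimate on $\mathrm{supp}\,\zeta \subset \omega$. The resulting integrals involve products of $\rho,\rho_x$ with $\varrho,\varrho_x$, weighted by quantities of the form $s^k \Theta^k e^{2s\Phi}$ (each spatial or temporal derivative of $A$ produces an additional factor of $s$ together with $\Theta_t/\Theta$ or $\Phi_x,\Phi_t$, all controlled on $\mathcal{O}_2$). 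Applying Young's inequality with a small parameter $\epsilon$ splits each term into an $\epsilon$-part involving $\varrho^2,\varrho_x^2,\rho_x^2$ weighted by exactly the quantities appearing in $\mathcal{I}(\rho)+\mathcal{I}(\varrho)$, and a main part of the form $\int_0^T\int_\omega s^k \Theta^k e^{2s\Phi} \rho^2\, \dq$. The $\epsilon$-parts containing $\varrho^2$ and $\rho_x^2,\varrho_x^2$ are handled respectively by absorption into the left-hand side of \eqref{ineqcarlprinc} (after choosing $\epsilon$ small) and by Caccioppoli's inequality \eqref{caccio}. Since $\Psi(x)<0$ on $\overline{\mathcal{O}_2}$, the map $y \mapsto y^k e^{2y\Psi}$ is bounded on $\R_+$, so $s^k \Theta^k e^{2s\Phi}$ is uniformly bounded on $Q$; taking the worst power of $s$ that arises from the integrations by parts yields the factor $s^7$, and \eqref{obser} follows after combining with \eqref{ineqcarlprinc}.

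The main obstacle will be the careful bookkeeping of the integrations by parts, and in particular the control of the mixed first-order contributions involving $(\beta a_2 \rho)_x$ and $\beta b_2 \varrho_x$: here the estimate $|\beta(x)| \leq L \sqrt{a(x)}$ from \eqref{b1}, together with the fact that $a$ is bounded above and below by positive constants on the compact set $\overline{\mathcal{O}_2} \subset (0,1]$, is what prevents the degeneracy of $a$ from interfering with the localized argument. Tracking the highest exponent of $s$ produced by the substitutions and integration by parts—while being sure that every term containing $\varrho^2,\varrho_x^2,\rho_x^2$ carries a coefficient small enough to be absorbed either in $\mathcal{I}(\rho)+\mathcal{I}(\varrho)$ or in a Caccioppoli step—is the delicate step.
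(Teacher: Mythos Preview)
Your approach is essentially the paper's: it too introduces $\mathcal{O}_2$ with $\mathcal{O}_1\Subset\mathcal{O}_2\Subset\omega_d\cap\omega$ and a cut-off $\xi$, multiplies the first equation of \eqref{rhobis} by $\xi\,s^3\Theta^3 e^{2s\Phi}\,\varrho$ (which is exactly your substitution of one factor of $\varrho$), integrates by parts, replaces $\varrho_t$ via \eqref{varrho}, and then applies Young's inequality so that the $\varrho$- and $\varrho_x$-pieces are absorbed into $\mathcal{I}(\varrho)$ while the $\rho$-pieces are localized on $\mathcal{O}_2$ with a bounded weight (the paper uses $2\Phi-\varphi\leq 0$ from \eqref{ineqphi} in place of your $\Psi<0$ argument). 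A minor simplification in your outline that the paper does not exploit: since $\omega\cap\omega_i=\emptyset$, the source in \eqref{varrho} indeed vanishes on $\operatorname{supp}\zeta$, so the term the paper calls $J_1$ is actually zero and no appeal to $\mu_i$ large is needed at this step; conversely, after the integrations by parts the mixed $\rho_x\varrho_x$ contributions cancel, so the Caccioppoli step you reserved for bare $\rho_x^2,\varrho_x^2$ terms is in fact unnecessary here.
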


\begin{proof}
		
	We want to eliminate the local term corresponding to $\varrho$ on the right hand side of \eqref{ineqcarlprinc}.
	\noindent So, let $\O_2$ be a non empty set such that $\O_1\Subset \O_2\Subset \omega_d\cap\omega$. We introduce as in \cite{teresa2000insensitizing} the cut off function $\xi\in C^{\infty}_0(\Omega)$ such that 
	\begin{subequations}\label{owogene1}
		\begin{alignat}{11}
		\dis 	0\leq \xi\leq 1\ \mbox{in}\ \Omega, \,\,\xi=1 \hbox{ in } \O_1 ,\,\,  \xi=0 \hbox{ in } \Omega\setminus\O_2,\\
		\dis 	\frac{\xi_{xx}}{\xi^{1/2}}\in L^{\infty}(\O_2),\,\, \frac{\xi_{x}}{\xi^{1/2}}\in L^{\infty}(\O_2).
		\end{alignat}
	\end{subequations}
Set $\dis u=s^3\Theta^3e^{2s\Phi}$. Then $u(T)=u(0)=0$ and we have the following estimations:

\begin{equation}\label{con}
	\begin{array}{rll}
		\dis |u\xi|\leq s^3\Theta^3e^{2s\Phi}\xi,\ \ \ \ \ \
		\dis \left|(u\xi)_t\right|\leq Cs^4\Theta^8e^{2s\Phi}\xi,\\
		\\
		\dis |(u\xi)_x|\leq Cs^4\Theta^4e^{2s\Phi}\xi,\ \ \ \ \ \
		\dis |(a(x)(u\xi)_x)_x|\leq Cs^5\Theta^5e^{2s\Phi}\xi,
	\end{array}
\end{equation}
where $C$ is a positive constant.\\
	Multiplying the first equation of \eqref{rhobis} by $u\xi\varrho$ and integrating by parts over $Q$, we obtain
	\begin{equation}\label{beau}
		J_1+J_2+J_3+J_4+J_5+J_6+J_7=\int_{Q}u\xi|\varrho|^2\chi_{\omega_d}\ \dT,
	\end{equation}
where
	$$
	\begin{array}{lllll}
	J_1=\dis-\frac{\alpha_1}{\mu_1}\int_{Q}u\xi|\rho|^2\chi_{\omega_{1}} \ \dT-\frac{\alpha_2}{\mu_2}\int_{Q}u\xi|\rho|^2\chi_{\omega_{2}} \ \dT,\,
	J_2=\int_{Q}\rho\varrho\frac{\partial (u\xi)}{\partial t}\ \dT,\\
	  J_3= \dis -\int_{Q}(a(x)(u\xi)_x)_x\ \rho\varrho\ \dT,\ 
	\dis J_4=\dis-2\int_Qa(x)(u\xi)_x\rho\varrho_{x}\ \dT,\\ J_5=\dis\int_Q(a_1-b_1)u\xi\rho\varrho\ \dT,\ J_6=\dis\int_Q\beta(x)u\xi\rho(a_2-b_2)\varrho_{x}\ \dT,\ J_7=\dis\int_Q\beta(x)a_2(u\xi)_x\rho\varrho\ \dT.
	\end{array}
	$$
		
	Let us estimate each $J_i,\ i=1,\cdots,7$. From Young's inequality, we have
	
	\begin{eqnarray*}
		J_1&\leq& \dis \left(\frac{\alpha_1}{\mu_1}+\frac{\alpha_2}{\mu_2}\right)C\int_{Q} s^3\Theta^3e^{2s\Phi}\xi|\rho|^2\ \dT\\
		&\leq&\dis \left(\frac{\alpha_1^2}{\mu_1^2}+\frac{\alpha_2^2}{\mu_2^2}\right)\int_Q s^3\Theta^3\frac{x^2}{a(x)}e^{2s\varphi}|\rho|^2\dT+C\int_{0}^{T}\int_{\O_2} s^3\Theta^3\frac{a(x)}{x^2}e^{2s(2\Phi-\varphi)}|\rho|^2\dT,	
	\end{eqnarray*}
	
	\begin{eqnarray*}
		J_2&\leq&\dis C\int_{Q}s^4\Theta^8e^{2s\Phi}\xi|\rho\varrho|\ \dT\\
		&\leq&\dis \frac{\delta_1}{2}\int_Q s^3\Theta^3\frac{x^2}{a(x)}e^{2s\varphi}|\varrho|^2\dT+C_{\delta_1}\int_{0}^{T}\int_{\O_2} s^5\Theta^{13}\frac{a(x)}{x^2}e^{2s(2\Phi-\varphi)}|\rho|^2\dT,
	\end{eqnarray*}
	for any $\delta_1>$0.
	
	\begin{eqnarray*}
		J_3&\leq&C\int_{Q}s^5\Theta^5e^{2s\Phi}\xi|\rho\varrho|\ \dT\\
		&\leq&\dis \frac{\delta_2}{2}\int_Q s^3\Theta^3\frac{x^2}{a(x)}e^{2s\varphi}|\varrho|^2\dT+C_{\delta_2}\int_{0}^{T}\int_{\O_2} s^7\Theta^7\frac{a(x)}{x^2}e^{2s(2\Phi-\varphi)}|\rho|^2\dT,
	\end{eqnarray*}
for any $\delta_2>$0.

		\begin{eqnarray*}
			J_4&\leq&C\int_{Q}s^4\Theta^4a(x)e^{2s\Phi}\xi|\rho\varrho_{x}|\ \dT\\
			&\leq&\dis \frac{\delta_3}{2}\int_Q s\Theta a(x)e^{2s\varphi}|\varrho_{x}|^2\dT+C_{\delta_3}\int_{0}^{T}\int_{\O_2} s^7\Theta^7a(x)e^{2s(2\Phi-\varphi)}|\rho|^2\dT,
		\end{eqnarray*}
	for some $\delta_3>$0.
	
		\begin{eqnarray*}
		J_5&\leq&C\int_{Q}(a_1-b_1)s^3\Theta^3e^{2s\Phi}\xi|\rho\varrho|\ \dT\\
		&\leq&\dis \frac{\delta_4}{2}\int_Q s^3\Theta^3 \frac{x^2}{a(x)}e^{2s\varphi}|\varrho|^2\dT+C_{\delta_4}(\|a_1-b_1\|_{\infty})\int_{0}^{T}\int_{\O_2} s^3\Theta^3\frac{a(x)}{x^2}e^{2s(2\Phi-\varphi)}|\rho|^2\dT,
	\end{eqnarray*}
for some $\delta_4>$0.

	\begin{eqnarray*}
	J_6&\leq&C\int_{Q}\beta(x)(a_2-b_2)s^3\Theta^3e^{2s\Phi}\xi|\rho\varrho_x|\ \dT\\
	&\leq&\dis \frac{\delta_5}{2}\int_Q s\Theta a(x)e^{2s\varphi}|\varrho_x|^2\dT+C_{\delta_5}(\|a_2-b_2\|_{\infty})\int_{0}^{T}\int_{\O_2} s^5\Theta^5\frac{1}{a(x)}\beta^2(x)e^{2s(2\Phi-\varphi)}|\rho|^2\dT.
\end{eqnarray*}
Since $\dis |\beta(x)|\leq L\sqrt{a(x)}$, we have
	\begin{eqnarray*}
	J_6
	\leq\dis \frac{\delta_5}{2}\int_Q s\Theta a(x)e^{2s\varphi}|\varrho_x|^2\dT+C_{\delta_5}(\|a_2-b_2\|_{\infty})\int_{0}^{T}\int_{\O_2} s^5\Theta^5e^{2s(2\Phi-\varphi)}|\rho|^2\dT,
\end{eqnarray*}
for any $\delta_5>$0.

	\begin{eqnarray*}
	J_7&\leq&C\int_{Q}\beta(x)\beta (x)a_2s^4\Theta^4e^{2s\Phi}\xi|\rho\varrho|\ \dT\\
	&\leq&\dis \frac{\delta_6}{2}\int_Q s^3\Theta^3 \frac{x^2}{a(x)}e^{2s\varphi}|\varrho|^2\dT+C_{\delta_6}(\|a_2\|_{\infty})\int_{0}^{T}\int_{\O_2} s^5\Theta^5\frac{a(x)}{x^2}\beta^2(x)e^{2s(2\Phi-\varphi)}|\rho|^2\dT.
\end{eqnarray*}
Since $\dis |\beta(x)|\leq L\sqrt{a(x)}$, we have
\begin{eqnarray*}
	J_7
	\leq\dis \frac{\delta_6}{2}\int_Q s^3\Theta^3 \frac{x^2}{a(x)}e^{2s\varphi}|\varrho|^2\dT+C_{\delta_6}(\|a_2-b_2\|_{\infty})\int_{0}^{T}\int_{\O_2} s^5\Theta^5\frac{a^2(x)}{x^2}e^{2s(2\Phi-\varphi)}|\rho|^2\dT,
\end{eqnarray*}
for any $\delta_6>$0.
	
	Finally, choosing the constants $\delta_i$ such that $\dis \delta_1=\delta_2=\delta_4=\delta_6=\frac{1}{4C_1}$ and $\dis \delta_3=\delta_5=\frac{1}{2C_1}$, where $C_1$ is the constant obtained to Theorem \ref{thm1}, it follows from \eqref{beau} and the previous inequalities that
	\begin{equation}\label{owo4}
	\begin{array}{rll}
	&&\dis \int_0^T\int_{\O_1}s^3\Theta^3e^{2s\Phi}|\varrho|^2\ \dT	
	\leq\dis \frac{1}{2C_1}\mathcal{I}(\varrho)+\dis \left(\frac{\alpha_1^2}{\mu_1^2}+\frac{\alpha_2^2}{\mu_2^2}\right)\int_0^T\int_{Q} s^3\Theta^3\frac{x^2}{a(x)}e^{2s\varphi}|\rho|^2\ \dT\\
	&&+\dis C(\|a_1-b_1\|_{\infty})\int_{0}^{T}\int_{\O_2} s^7\Theta^{13}\frac{a(x)}{x^2}e^{2s(2\Phi-\varphi)}|\rho|^2\ \dT
	\dis +C\int_{0}^{T}\int_{\O_2} s^7\Theta^7a(x)e^{2s(2\Phi-\varphi)}|\rho|^2\ \dT\\
	&&+\dis C(\|a_2-b_2\|_{\infty})\int_{0}^{T}\int_{\O_2} s^5\Theta^{5}e^{2s(2\Phi-\varphi)}|\rho|^2\ \dT
	\dis +C(\|a_2\|_{\infty})\int_{0}^{T}\int_{\O_2} s^5\Theta^5\frac{a^2(x)}{x^2}e^{2s(2\Phi-\varphi)}|\rho|^2\ \dT.
	\end{array}
	\end{equation}
	Combining \eqref{ineqcarlprinc} with \eqref{owo4} and taking $\mu_i,\ i=1,2$ large enough, we obtain 
	\begin{equation}\label{obser2}
		\begin{array}{rll}
			&&\dis \mathcal{I}(\rho)+\mathcal{I}(\varrho)	
			\leq
		\dis C(\|a_1-b_1\|_{\infty})\int_{0}^{T}\int_{\O_2} s^7\Theta^{13}\frac{a(x)}{x^2}e^{2s(2\Phi-\varphi)}|\rho|^2\ \dT\\
			&&\dis +C\int_{0}^{T}\int_{\O_2} s^7\Theta^7a(x)e^{2s(2\Phi-\varphi)}|\rho|^2\ \dT
			+\dis C(\|a_2-b_2\|_{\infty})\int_{0}^{T}\int_{\O_2} s^5\Theta^{5}e^{2s(2\Phi-\varphi)}|\rho|^2\ \dT\\
			&&\dis +C(\|a_2\|_{\infty})\int_{0}^{T}\int_{\O_2} s^5\Theta^5\frac{a^2(x)}{x^2}e^{2s(2\Phi-\varphi)}|\rho|^2\ \dT.
		\end{array}
	\end{equation}
Note that $\dis \frac{a(x)}{x^2}$, $\dis \frac{a^2(x)}{x^2}$ and $a(x)$ are bounded on $\O_2$. 
Then, using \eqref{obser2}, we obtain the existence of a positive constant $C_2$ such that 	
		\begin{eqnarray}\label{obser1}
		\dis \mathcal{I}(\rho)+\mathcal{I}(\varrho)
		\leq C_2\int_{0}^{T}\int_{\O_2} s^7\Theta^{13}e^{2s(2\Phi-\varphi)}|\rho|^2\ \dT.
		\end{eqnarray} 	
Thanks to \eqref{ineqphi}, we have $2\Phi-\varphi\leq 0$, then $\dis \Theta^{13}e^{2s(2\Phi-\varphi)}\in L^\infty(Q)$. Furthermore, using the fact that, $\O_2\subset\omega$, we deduce the inequality \eqref{obser} and we complete the proof of Proposition \ref{prop5}.	
\end{proof}
\paragraph{}

To prove the needed observability inequality, we are going to improve the Carleman inequality \eqref{obser} in the sense that the weight functions do not vanish at $t=0$. To this end, we modify the weight functions $\varphi$ and $\Theta$ defined in \eqref{functcarl} as follows:
\begin{equation}\label{phitil}
\widetilde{\varphi}(t,x)=\left\{
\begin{array}{rllll}
\dis\varphi\left(\frac{T}{2},x\right)\ \ \mbox{if}\ \ t\in \left[0,\frac{T}{2}\right],\\
\dis \varphi(t,x)\ \ \mbox{if}\ \ t\in \left[\frac{T}{2},T\right],
\end{array}
\right.
\end{equation}
and
\begin{equation}\label{Thetatil}
\widetilde{\Theta}(t)=\left\{
\begin{array}{rllll}
\dis\Theta\left(\frac{T}{2}\right)\ \ \mbox{if}\ \ t\in \left[0,\frac{T}{2}\right],\\
\dis \Theta(t)\ \ \mbox{if}\ \ t\in \left[\frac{T}{2},T\right].
\end{array}
\right.
\end{equation} Then in view of the definition of $\varphi$ and $\Theta$, the functions $\widetilde{\varphi}(.,x)$ and $\widetilde{\Theta}(\cdot)$ are of class $\mathcal{C}^1$ on $[0,T[$. We have the following result.

\begin{prop} \label{pro}$ $
	
	  Under the assumptions of Proposition \ref{prop5}, there exist a positive constant\\ $C=C(C_2,\|a_1\|_{\infty}, \|a_2\|_{\infty},\|b_1\|_{\infty}, \|b_2\|_{\infty},\mu_1,\mu_2, T)>0$ and a positive weight function $\kappa$ such that every solution $(\rho, \psi^i)$  of \eqref{rho}-\eqref{psi}, satisfies the following inequality:
	
	\begin{eqnarray}\label{obser3}
	\|\rho(0,\cdot)\|^2_{L^2(\Omega)}+\sum_{i=1}^{2}\int_{Q}\kappa^2|\psi^i|^2\,\dT
	\leq C\int_{0}^{T}\int_{\omega}|\rho|^2\,\dq,
	\end{eqnarray}
	where the constant $C_2$ is given by the Proposition \ref{prop5}.
\end{prop}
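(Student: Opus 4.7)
The strategy is to convert the Carleman estimate of Proposition \ref{prop5}, whose weights $\Theta,\varphi$ vanish at $t=0$, into the observability inequality \eqref{obser3} by combining it with parabolic dissipation arguments on $[0,T/2]$, where the modified weights $\widetilde{\Theta},\widetilde{\varphi}$ of \eqref{phitil}--\eqref{Thetatil} are constant in $t$. The plan has two main parts: a backward energy estimate for $\rho$ coupled with a forward energy estimate for $\varrho$ and Proposition \ref{prop5} to recover $\|\rho(0,\cdot)\|^2_{L^2(\Omega)}$, followed by a Fubini-based bound on $\int_Q\kappa^2|\psi^i|^2$ exploiting the smallness of $1/\mu_i^2$ and the Carleman estimate again.

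First I multiply the first equation of \eqref{rhobis} by $\rho$, integrate over $\Omega$, and absorb the convection term using $|\beta(x)|\leq L\sqrt{a(x)}$ (see \eqref{b1}) and Young's inequality, obtaining
$$-\tfrac{1}{2}\tfrac{d}{dt}\|\rho(t)\|^2_{L^2(\Omega)}\leq C\|\rho(t)\|^2_{L^2(\Omega)}+\tfrac{1}{2}\|\varrho(t)\|^2_{L^2(\omega_d)},$$
with $C=C(\|a_1\|_{\infty},\|a_2\|_{\infty})$. A Gronwall argument on $[0,t]$ for $t\in[T/4,T/2]$ followed by averaging in $t$ yields $\|\rho(0,\cdot)\|^2_{L^2(\Omega)}\leq \frac{C}{T}\int_{T/4}^{T/2}\|\rho\|^2_{L^2(\Omega)}\,dt+C\int_0^{T/2}\|\varrho\|^2_{L^2(\omega_d)}\,dt$. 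Since the Carleman weights $s\Theta(t)e^{2s\varphi(t,x)}$ are bounded below by a positive constant on $[T/4,T/2]$, Hardy--Poincar\'e \eqref{hardy} together with Proposition \ref{prop5} gives $\int_{T/4}^{T/2}\|\rho\|^2+\int_{T/4}^{T/2}\|\varrho\|^2\leq C\int_0^T\int_\omega|\rho|^2\,\dT$. For the remaining piece $\int_0^{T/4}\|\varrho\|^2$ the forward energy for $\varrho$ with $\varrho(0)=0$ yields $\|\varrho(t)\|^2\leq \frac{C}{\mu^2}\int_0^t\|\rho\|^2_{L^2(\omega_1\cup\omega_2)}\,ds$; coupling this with a second backward energy estimate for $\rho$ on $[0,T/4]$ (which expresses $\int_0^{T/4}\|\rho\|^2$ in terms of $\int_{T/4}^{T/2}\|\rho\|^2$ and $\int_0^{T/4}\|\varrho\|^2$) leads to the circular inequality $\int_0^{T/4}\|\varrho\|^2\leq C\int_0^T\int_\omega|\rho|^2+\frac{C}{\mu^2}\int_0^{T/4}\|\varrho\|^2$, which closes for $\mu_1,\mu_2$ sufficiently large and gives $\|\rho(0,\cdot)\|^2_{L^2(\Omega)}\leq C\int_0^T\int_\omega|\rho|^2\,\dT$.

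For the weighted $\psi^i$ term, the forward energy for $\psi^i$ with $\psi^i(0)=0$ gives $\|\psi^i(t)\|^2_{L^2(\Omega)}\leq \frac{C}{\mu_i^2}\int_0^t\|\rho(s)\|^2_{L^2(\omega_i)}\,ds$. I choose $\kappa(t)>0$ with $\int_0^T\kappa^2<\infty$ and such that its primitive $K(s)=\int_s^T\kappa^2(\tau)\,d\tau$ is bounded by the Carleman weight $s^3\Theta^3(s)e^{2s\Theta(s)\delta_-}$ on $[T/2,T]$, where $\delta_-=\min_{x\in\omega_i}\delta(x)$; this is compatible with the announced blow-up behavior of $\kappa$ at $t=T$. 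By Fubini,
$$\int_Q\kappa^2|\psi^i|^2\,\dT\leq\frac{C}{\mu_i^2}\int_0^T K(s)\|\rho(s)\|^2_{L^2(\omega_i)}\,ds,$$
which I split at $s=T/2$: on $[0,T/2]$ the factor $K$ is bounded and I use the bound on $\int_0^{T/2}\|\rho\|^2_{L^2(\Omega)}$ from the previous step; on $[T/2,T]$ the chosen bound on $K$, combined with the fact that $\frac{x^2}{a(x)}$ is bounded below on $\omega_i$, reduces the integral to a term majorized by $\mathcal{I}(\rho)$, which is again controlled by Proposition \ref{prop5}. The main obstacle is closing the circular estimate on $[0,T/4]$: since the Carleman weight degenerates at $t=0$ and $\omega_1\cup\omega_2$ is disjoint from $\omega$, $\int_0^{T/4}\|\rho\|^2_{L^2(\omega_1\cup\omega_2)}$ admits no direct Carleman bound, and the iteration of backward energy for $\rho$ with forward energy for $\varrho$ only closes thanks to the smallness of $1/\mu_i^2$.
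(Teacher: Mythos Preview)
Your argument is correct and arrives at \eqref{obser3}, but the route differs in two places from the paper's proof.

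For the term $\|\rho(0,\cdot)\|^2_{L^2(\Omega)}$, the paper introduces a smooth cutoff $\zeta\in\mathcal C^1([0,T])$ with $\zeta\equiv1$ on $[0,T/2]$ and $\zeta\equiv0$ on $[3T/4,T]$, sets $z=\zeta(t)e^{-r(T-t)}\rho$, and runs a single backward energy estimate on $z$ over the whole interval. This directly produces the bound $\|\rho(0)\|^2+\widetilde{\mathcal I}_{[0,T/2]}(\rho)\leq C\bigl(\int_0^{3T/4}\|\varrho\|^2+\int_{T/2}^{3T/4}\|\rho\|^2\bigr)$; the $\varrho$ contribution on $[0,T/2]$ is then absorbed into $\widetilde{\mathcal I}_{[0,T/2]}(\rho)$ via a forward energy estimate carrying the factor $(\alpha_1^2/\mu_1^2+\alpha_2^2/\mu_2^2)$, and the remaining terms on $[T/2,3T/4]$ are handled by Proposition~\ref{prop5}. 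Your splitting $[0,T/4]\cup[T/4,T/2]$ with an explicit bootstrap between the backward energy for $\rho$ and the forward energy for $\varrho$ accomplishes the same closure and is equally valid; the cutoff just packages the iteration into one estimate.

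For the weighted $\psi^i$ term, the paper fixes $\kappa(t)=e^{\bar s\hat\varphi(t)}$ with $\hat\varphi(t)=\min_{x}\widetilde\varphi(t,x)$, multiplies the $\psi^i$ equation by $\kappa^2\psi^i$, and runs a Gronwall argument directly on the weighted quantity $\int_\Omega\kappa^2|\psi^i|^2$, using that $\partial_t\hat\varphi\leq0$ near $T$ gives the right sign. This yields $\int_Q\kappa^2|\psi^i|^2\leq C\int_Q\kappa^2|\rho|^2$, and $\kappa^2\leq e^{2\bar s\widetilde\varphi}$ feeds the right-hand side back into $\widetilde{\mathcal I}_{[0,T]}(\rho)$, already controlled in Step~1. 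Your unweighted pointwise bound plus Fubini is a legitimate alternative, with the advantage that it decouples the choice of $\kappa$ from the energy estimate; the paper's explicit $\kappa$ in turn avoids having to verify that the primitive $K(s)$ sits below the Carleman weight.

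One small caveat: your last step invokes ``$\frac{x^2}{a(x)}$ bounded below on $\omega_i$'', which is not guaranteed if $0\in\overline{\omega_i}$. The paper gets around this (implicitly) by controlling $\int_Q\kappa^2|\rho|^2$ through the full $\widetilde{\mathcal I}_{[0,T]}(\rho)$ on $\Omega$ rather than restricting to $\omega_i$; in your framework the same fix is to bound $\|\rho(s)\|^2_{L^2(\omega_i)}\leq\|\rho(s)\|^2_{L^2(\Omega)}$ and then use Hardy--Poincar\'e on $e^{s\varphi}\rho$ to pass to the gradient term of $\mathcal I(\rho)$.
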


\begin{proof} 
	
	We proceed in two steps.\\
\textbf{Step 1.} We prove that there exist a constant $C=C(C_2,\|a_1\|_{\infty}, \|a_2\|_{\infty},\|b_1\|_{\infty}, \|b_2\|_{\infty},T)>0$ such that 
\begin{equation}\label{observ}
	\begin{array}{llll}
		\dis  \|\rho(0,\cdot)\|^2_{L^2(\Omega)}+\widetilde{\mathcal{I}}_{[0,T]}(\rho)+\widetilde{\mathcal{I}}_{[0,T]}(\varrho)
		\dis \leq\dis C\int_{0}^{T}\int_{\omega}|\rho|^2\,\dq,
	\end{array}
\end{equation}
	where $\widetilde{\mathcal{I}}(\cdot)$ is defined in the follow by \eqref{car3}.

	Following the strategy in \cite{araruna2015anash}, let us introduce a function $\zeta\in \mathcal{C}^1 ([0,T])$  such that
	\begin{equation}\label{hypobeta}
	0\leq \zeta\leq 1,\ \zeta(t)=1 \hbox{ for } t\in [0,T/2],\  \zeta(t)=0 \hbox{ for } t\in [3T/4,T],\  |\zeta^\prime(t)|\leq C/T.
	\end{equation}
	For any $(t,x)\in Q$, we set
	$$
	\begin{array}{lll}
	z(t,x)=\zeta(t)e^{-r(T-t)}\rho(t,x),
	\end{array}
	$$
	where $r>0$. Then in view of \eqref{rhobis}, the function $z$ is solution of
	\begin{equation}\label{eq26z}
		\left\{
		\begin{array}{rllll}
			\dis -z_{t}-\left(a(x)z_{x}\right)_{x}+a_1z-(\beta(x)a_2z)_x+rz  &=& \dis \zeta e^{-r(T-t)}\varrho\chi_{\omega_d}-\zeta^\prime e^{-r(T-t)} \rho & \mbox{in}& Q,\\
			\dis z(t,0)=z(t,1)&=&0& \mbox{on}& (0,T), \\
			\dis z(T,\cdot)&=&0 &\mbox{in}&\Omega.
		\end{array}
		\right.
	\end{equation}
	From the classical energy estimate for the system \eqref{eq26z} and using the definition of $\beta$ and $z$, there exists a positive constant $C=C(\|a_1\|_{\infty}, \|a_2\|_{\infty},T)$ such that
	$$
	\begin{array}{llll}
	\dis \|\rho(0,\cdot)\|^2_{L^2(\Omega)}+\int_{0}^{T/2}\int_{\Omega}|\rho|^2\ dxdt+\int_{0}^{T/2}\int_{\Omega}a(x)|\rho_{x}|^2\ dxdt\\
	\dis \leq C
	\left(\int_0^{3T/4}\int_\Omega |\varrho|^2\ \dq
	+\int_{T/2}^{3T/4}\int_\Omega|\rho|^2\ \dq\right).
	\end{array}
	$$
	The functions $\widetilde{\varphi}$ and $\widetilde{\Theta}$ defined by \eqref{phitil} and \eqref{Thetatil}, respectively, have lower and upper bounds for $(t,x)\in  [0,T/2]\times \Omega$. Furthermore, $\dis a(x)\geq c>0$ in $(0,1]$ and $\dis \frac{x^2}{a(x)}\geq c>0$ in $(0,1]$, then, we can introduce the corresponding weight functions in the above expression and we get
	\begin{equation}\label{pou}
	\begin{array}{llll}
	\dis \|\rho(0,\cdot)\|^2_{L^2(\Omega)}+\widetilde{\mathcal{I}}_{[0,T/2]}(\rho) \dis \leq C(\|a_1\|_{\infty}, \|a_2\|_{\infty},T)
	\left(\int_0^{3T/4}\int_\Omega |\varrho|^2\ \dq
	+\int_{T/2}^{3T/4}\int_\Omega|\rho|^2\ \dq\right),
	\end{array}
	\end{equation}
	where 
	\begin{equation}\label{car3}
	\widetilde{\mathcal{I}}_{[a,b]}(l)=\int_a^b\int_{\Omega}\widetilde{\Theta}^3\frac{x^2}{a(x)}e^{2s\widetilde{\varphi}}|l|^2\ \dq+\int_a^b\int_{\Omega}\widetilde{\Theta}a(x)e^{2s\widetilde{\varphi}}|l_x|^2\ \dq.
	\end{equation}
	Adding the term $\widetilde{\mathcal{I}}_{[0,T/2]}(\varrho)$ on both sides of inequality \eqref{pou}, we have
		\begin{equation}\label{pou1}
		\begin{array}{llll}
			\dis \|\rho(0,\cdot)\|^2_{L^2(\Omega)}+\widetilde{\mathcal{I}}_{[0,T/2]}(\rho)+\widetilde{\mathcal{I}}_{[0,T/2]}(\varrho)\\ 
			\dis \leq C(\|a_1\|_{\infty}, \|a_2\|_{\infty}, T)
			\left(\int_0^{3T/4}\int_\Omega |\varrho|^2\ \dq
			+\int_{T/2}^{3T/4}\int_\Omega |\rho|^2\ \dq\right)+\widetilde{\mathcal{I}}_{[0,T/2]}(\varrho).
		\end{array}
	\end{equation}
	In order to eliminate the term $\widetilde{\mathcal{I}}_{[0,T/2]}(\varrho)$ in the right hand side of \eqref{pou1}, we use the classical energy estimates for the system \eqref{varrho} and we obtain:
	$$
	\begin{array}{llll}
		\dis \int_{0}^{T/2}\int_{\Omega}|\varrho|^2\ dxdt+\int_{0}^{T/2}\int_{\Omega}a(x)|\varrho_{x}|^2\ dxdt\\
		\dis \leq C(\|b_1\|_{\infty}, \|b_2\|_{\infty},T) \left(\frac{\alpha_1^2}{\mu_1^2}+\frac{\alpha_2^2}{\mu_2^2}\right)\int_{0}^{T/2}\int_\Omega |\rho|^2\ dxdt,
	\end{array}
	$$
	where $C$ is independent of $\mu_i,\ i=1,2$. The functions $\widetilde{\varphi}$ and $\widetilde{\Theta}$ have lower and upper bounds for $(t,x)\in  [0,T/2]\times \Omega$. Moreover, the function $\dis x\longmapsto \frac{x^2}{a(x)}$ is non-decreasing on $(0;1]$ and $\dis \frac{x^2}{a(x)}\geq c>0$ in $(0,1]$. Then, from the previous inequality, we obtain 
	\begin{equation}\label{pou2}
	\begin{array}{llll}
	\dis \widetilde{\mathcal{I}}_{[0,T/2]}(\varrho)
	\dis \leq C(\|b_1\|_{\infty)}, \|b_2\|_{\infty},T) \left(\frac{\alpha_1^2}{\mu_1^2}+\frac{\alpha_2^2}{\mu_2^2}\right)\int_{0}^{T/2}\int_\Omega \widetilde{\Theta}^3\frac{x^2}{a(x)}e^{2s\widetilde{\varphi}}|\rho|^2\ dxdt.
     \end{array}
	\end{equation}
	Replacing \eqref{pou2} in \eqref{pou1} and taking $\mu_i,\ i=1,2$ large enough, we obtain
	\begin{equation}\label{pou3}
		\begin{array}{llll}
			\dis \|\rho(0,\cdot)\|^2_{L^2(\Omega)}+\widetilde{\mathcal{I}}_{[0,T/2]}(\rho)+\widetilde{\mathcal{I}}_{[0,T/2]}(\varrho)\\ 
			\dis \leq C(\|a_1\|_{\infty}, \|a_2\|_{\infty},\|b_1\|_{\infty}, \|b_2\|_{\infty},T)
		\int_{T/2}^{3T/4}\int_\Omega ( |\rho|^2+|\varrho|^2)\ \dq.
		\end{array}
	\end{equation}
	The functions $\varphi$ and $\Theta$ defined in \eqref{functcarl} have the lower and upper bounds for $(t,x)\in  [T/2,3T/4]\times \Omega$. Moreover, the function $\dis x\longmapsto\frac{x^2}{a(x)}$ is non-decreasing on $(0,1]$. Using the inequality \eqref{obser}, the relation \eqref{pou3} becomes
	\begin{equation}\label{pou4}
		\begin{array}{llll}
			&&\dis \|\rho(0,\cdot)\|^2_{L^2(\Omega)}+\widetilde{\mathcal{I}}_{[0,T/2]}(\rho)+\widetilde{\mathcal{I}}_{[0,T/2]}(\varrho)\\ 
			&&\dis \leq\dis  C(\|a_1\|_{\infty}, \|a_2\|_{\infty},\|b_1\|_{\infty}, \|b_2\|_{\infty},T)
			\left(\mathcal{I}(\rho)+\mathcal{I}(\varrho)\right)\\
			&&\leq\dis C(C_2,\|a_1\|_{\infty}, \|a_2\|_{\infty},\|b_1\|_{\infty}, \|b_2\|_{\infty},T)\int_{0}^{T}\int_{\omega}|\rho|^2\,\dq,
		\end{array}
	\end{equation}
	where $\mathcal{I}(\cdot)$ is defined by \eqref{I} and the constant $C_2$ is defined in the Proposition \ref{prop5}.
	
	On the other hand, since $\Theta=\widetilde{\Theta}$ and $\varphi=\tilde{\varphi}$ in $[T/2,T]\times \Omega$, we use again estimate \eqref{obser} and we obtain
	\begin{equation}\label{pou5}
	\begin{array}{llll}
	\dis \widetilde{\mathcal{I}}_{[T/2,T]}(\rho)+\widetilde{\mathcal{I}}_{[T/2,T]}(\varrho)&\leq&\dis \mathcal{I}(\rho)+\mathcal{I}(\varrho)\\
	&\leq&\dis C(C_2,\|a_1\|_{\infty}, \|a_2\|_{\infty},\|b_1\|_{\infty}, \|b_2\|_{\infty},T)\int_{0}^{T}\int_{\omega}|\rho|^2\,\dq.
	\end{array}
	\end{equation}
	Adding \eqref{pou4} and \eqref{pou5}, we get
	\begin{equation*}\label{}
		\begin{array}{llll}
			\dis  \|\rho(0,\cdot)\|^2_{L^2(\Omega)}+\widetilde{\mathcal{I}}_{[0,T]}(\rho)+\widetilde{\mathcal{I}}_{[0,T]}(\varrho)
			\dis \leq\dis C\int_{0}^{T}\int_{\omega}|\rho|^2\,\dq,
		\end{array}
	\end{equation*}
	 where $C=C(C_2,\|a_1\|_{\infty}, \|a_2\|_{\infty},\|b_1\|_{\infty}, \|b_2\|_{\infty},T)$ and then, we deduce the estimation \eqref{observ}.\\
	\textbf{Step 2.}
	Now, we prove that there exist a constant $C=C(C_2,\|a_1\|_{\infty}, \|a_2\|_{\infty},\|b_1\|_{\infty}, \|b_2\|_{\infty},T,\mu_1,\mu_2)>0$ and a positive weight function $\kappa$ such that
		\begin{eqnarray}\label{observ1}
		\sum_{i=1}^{2}\int_{Q}\kappa^2|\psi^i|^2\,\dT
		\leq C\int_{0}^{T}\int_{\omega}|\rho|^2\,\dq.
	\end{eqnarray}
	
	Let us introduce the function
	\begin{eqnarray}\label{berlin}
	\dis \hat{\varphi}(t)=\min_{x\in \Omega}\widetilde{\varphi}(t,x).
	\end{eqnarray}
 We set the parameter $s=\bar{s}$ to a fixed value sufficiently
 large and we define the weight function $\kappa$  by:

\begin{eqnarray}\label{deftkappa}
	\dis \kappa(t)=e^{\bar{s}\hat{\varphi}(t)}\in L^\infty(0,T).
\end{eqnarray}
	Then $\kappa$ is a strictly positive function of class $\mathcal{C}^1$ on $[0,T)$ blowing up at $t=T$. Furthermore, $\dis \frac{\partial \hat{\varphi}}{\partial t}$ is also a positive function on $(0,T)$. 
	Now, multiplying the first equation of \eqref{psi} by $\kappa^2\psi^i$ and integrating by parts over $\Omega$, we obtain that 
	\begin{equation}\label{bec}
	\begin{array}{rlll}
	\dis \frac{1}{2}\frac{d}{dt}\int_{\Omega} \kappa^2|\psi^i|^2\ dx+\int_{\Omega} \kappa^2a(x)|\psi_{x}^i|^2\ dx
	\dis &=&\dis -\int_{\Omega}\kappa^2\, b_1\,|\psi^i|^2\ dx-\int_{\Omega}\kappa^2\, |\psi^i|^2\beta(x)b_2\psi_x^i\ dx\\
	&&-\dis \frac{1}{\mu_i}\int_{\omega_i} \kappa^2\rho\psi^i\ dx+\bar{s}\dis \int_{\Omega} \kappa^2\frac{\partial\hat{\varphi}}{\partial t}|\psi^i|^2\ dx.
	\end{array}
	\end{equation}
Since $\dis |\beta(x)|\leq L\sqrt{a(x)}$ and using the fact that $\dis \frac{\partial \hat{\varphi}}{\partial t}$ is a positive function on $[0,T)$, the inequality \eqref{bec} becomes
\begin{equation}\label{}
	\begin{array}{rlll}
		\dis \frac{1}{2}\frac{d}{dt}\int_{\Omega} \kappa^2|\psi^i|^2\ dx+\int_{\Omega} \kappa^2a(x)|\psi_{x}^i|^2\ dx
		\dis 
		\leq C\int_{\Omega} \kappa^2|\psi^i|^2\ dx+\frac{1}{2\mu_i^2}\int_{\omega_i} \kappa^2|\rho|^2\ dx
	\end{array}
\end{equation}
where $C=\left(\|b_1\|_{\infty}, \|b_2\|_{\infty}\right)=\dis \left(\|b_1\|_{\infty}+\frac{1}{2}\|b_2\|^2_{\infty}+\frac{1}{2}\right)$.
	Using Gronwall's Lemma and the fact that $\psi^i(x,0)=0$ for $x\in \Omega$, it follows that
	\begin{equation}\label{berlin2}
	\begin{array}{rlll}
	\dis \int_{\Omega} \kappa^2|\psi^i|^2\ dx\leq C\int_Q  \kappa^2|\rho|^2\ dx,\ \forall t\in [0,T],
	\end{array}
	\end{equation}
where $C=\left(\|b_1\|_{\infty}, \|b_2\|_{\infty}, T,\mu_1,\mu_2\right)$.
Using the definition of $\hat{\varphi}$ and $\kappa$ given by \eqref{berlin} and \eqref{deftkappa}, respectively, we have
\begin{equation}\label{ten}
	\kappa^2(t)\leq e^{2\bar{s}\widetilde{\varphi}(t,x)},\ \ \forall x\in \Omega.
\end{equation}
Thanks to the fact that $\dis \widetilde{\Theta}^{-1}\in L^\infty(0,T)$ and that the function $\dis x\longmapsto\frac{a(x)}{x^2}$ is non-decreasing on $(0,1]$, we deduce from \eqref{ten} the following inequality
	\begin{equation*}
	\begin{array}{rlll}
	\dis \int_{Q} \kappa^2|\psi^i|^2\ dx\leq \int_{Q} \widetilde{\Theta}^3\frac{x^2}{a(x)}e^{2s\widetilde{\varphi}}|\rho|^2\ \dT,
	\end{array}
	\end{equation*}
	which combining with \eqref{berlin2} and  \eqref{observ} give
	\begin{equation*}
	\begin{array}{rlll}
	\dis \int_{Q} \kappa^2|\psi^i|^2\ dx\leq C\int_0^T\int_{\omega} |\rho|^2 \dT,
	\end{array}
	\end{equation*}
	where $C=C(C_2,\|a_1\|_{\infty}, \|a_2\|_{\infty},\|b_1\|_{\infty}, \|b_2\|_{\infty},T,\mu_1,\mu_2)>0$.
	Adding this latter inequality with \eqref{observ}, we deduce \eqref{obser3}.	This ends the proof.
\end{proof}

\section{Null controllability of semilinear degenerate system}\label{null}	

In this section, we end the proof of Theorem \ref{theolinear}. More precisely, we prove that the linear systems \eqref{yli}-\eqref{pli} are null controllable.  Thanks to the Proposition \ref{pro}, the following result holds.

\begin{prop}\label{linear}$ $
	
	Suppose that \eqref{od} holds, $\mu_i,\ i=1,2$ are large enough, the coefficient $a(\cdot)$ satisfies \eqref{k}, $y^0\in L^2(\Omega)$ and $y_{i,d}\in L^2((0,T)\times \omega_{d})$ such that \eqref{nou} holds. Then, there exists a leader control $\bar{h}\in L^2(\omega_T)$ such that the corresponding solutions to \eqref{yli}-\eqref{pli} satisfies \eqref{mainobj}. Furthermore; there exists a constant $C=C(C_2,\|a_1\|_{\infty}, \|a_2\|_{\infty},\|b_1\|_{\infty}, \|b_2\|_{\infty},\mu_1,\mu_2, T)>0$ such that
	\begin{equation}\label{mon10theo}
		\begin{array}{ccc}
			\dis \|\bar{h}\|_{L^2(\omega_T)}\leq C
			\dis  \left(\sum_{i=1}^2\alpha_i^2\left\|\kappa^{-1} y_{i,d}\right\|^2_{L^2((0,T)\times\omega_d)}+ \|y^0\|^2_{L^2(\Omega)}\right)^{1/2}.
		\end{array}
	\end{equation}
\end{prop}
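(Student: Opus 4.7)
\textbf{Proof sketch of Proposition \ref{linear}.} The plan is to construct $\bar h$ by the classical Hilbert Uniqueness/duality method (\`a la Fursikov--Imanuvilov and Lions--Zuazua), using the observability inequality \eqref{obser3} from Proposition \ref{pro} as the decisive ingredient. First I would derive the duality identity linking the forward cascade \eqref{yli}--\eqref{pli} to the backward cascade \eqref{rho}--\eqref{psi}: multiplying the first equation of \eqref{yli} by a solution $\rho$ of \eqref{rho}, the first equation of \eqref{pli} by the associated $\psi^i$, integrating by parts over $Q$ (all boundary/trace contributions vanishing thanks to the Dirichlet conditions together with $p^i(T)=0$ and $\psi^i(0)=0$), and summing over $i$, the cross-coupling terms $-\mu_i^{-1}\int p^i\rho\chi_{\omega_i}$ and $\alpha_i\int y\psi^i\chi_{\omega_d}$ cancel pairwise, yielding
\begin{equation*}
\int_\Omega y(T)\,\rho^T\,dx-\int_\Omega y^0\rho(0)\,dx+\sum_{i=1}^{2}\alpha_i\int_0^T\!\!\int_{\omega_d}y_{i,d}\,\psi^i\,dxdt=\int_0^T\!\!\int_\omega h\rho\,dxdt.
\end{equation*}
Hence \eqref{mainobj} is equivalent to finding $h\in L^2(\omega_T)$ such that the right-hand side equals $-\int y^0\rho(0)\,dx+\sum_i\alpha_i\int y_{i,d}\psi^i\,dxdt$ for every $\rho^T$.

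I would then minimize the strictly convex functional
\begin{equation*}
J(\rho^T):=\frac{1}{2}\int_0^T\!\!\int_\omega|\rho|^2\,dxdt-\int_\Omega y^0\rho(0)\,dx+\sum_{i=1}^{2}\alpha_i\int_0^T\!\!\int_{\omega_d}y_{i,d}\psi^i\,dxdt
\end{equation*}
on the completion $Z$ of $L^2(\Omega)$ with respect to the seminorm $\rho^T\mapsto\|\rho\|_{L^2(\omega_T)}$ (which is in fact a norm by \eqref{obser3}). Writing $y_{i,d}\psi^i=(\kappa^{-1}y_{i,d})(\kappa\psi^i)$ and combining Cauchy--Schwarz with \eqref{obser3} and the admissibility assumption \eqref{nou} controls the linear part of $J$ on $Z$ by $C\bigl(\|y^0\|_{L^2(\Omega)}+\sum_i\alpha_i\|\kappa^{-1}y_{i,d}\|_{L^2((0,T)\times\omega_d)}\bigr)\|\rho\|_{L^2(\omega_T)}$, so $J$ is continuous and coercive on $Z$ and admits a unique minimizer $\hat\rho^T$. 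Setting $\bar h:=-\hat\rho\chi_\omega$ (with $\hat\rho$ the adjoint state associated to $\hat\rho^T$), the Euler--Lagrange equation for $J$ combined with the duality identity above yields $\int_\Omega y(T)\rho^T\,dx=0$ for every $\rho^T$, hence \eqref{mainobj}. The estimate \eqref{mon10theo} follows from $J(\hat\rho^T)\leq J(0)=0$ together with Cauchy--Schwarz and \eqref{obser3}.

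The main technical difficulty is making rigorous sense of the minimization in $Z$: since this space is defined through the observation seminorm rather than the $L^2(\Omega)$ norm, one must verify that the adjoint trajectory $(\hat\rho,\hat\psi^1,\hat\psi^2)$ can be defined in a suitable transposition/weak sense starting from $\hat\rho^T\in Z$, so that the quantities $\hat\rho(0)\in L^2(\Omega)$ and $\kappa\hat\psi^i\in L^2(Q)$ appearing in the Euler--Lagrange equation are meaningful. This is handled by a density argument: for a sequence of smooth data $\rho^T_n\to\hat\rho^T$ in $Z$, \eqref{obser3} provides uniform control of the associated adjoint traces, and passing to the limit produces the required objects. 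Once this framework is set up, the remaining steps reduce to routine convex analysis.
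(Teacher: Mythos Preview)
Your argument is correct, but it follows a different route from the paper. The authors use a \emph{penalization} (or approximate controllability) scheme: for each $\varepsilon>0$ they minimize
\[
J_\varepsilon(h)=\frac{1}{2\varepsilon}\|y(T,\cdot)\|_{L^2(\Omega)}^2+\frac{1}{2}\|h\|_{L^2(\omega_T)}^2
\]
over $h\in L^2(\omega_T)$, characterize the minimizer $h_\varepsilon$ through the adjoint cascade \eqref{rhoeps}--\eqref{psieps} with final datum $\rho_\varepsilon(T)=-\varepsilon^{-1}y_\varepsilon(T)$, plug the observability inequality \eqref{obser3} into the resulting energy identity to obtain the uniform bounds \eqref{mon10}--\eqref{mon10b}, and then let $\varepsilon\to 0$ by weak compactness. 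You instead minimize the dual functional $J(\rho^T)$ directly on the completion $Z$ of $L^2(\Omega)$ under the observation norm, which is the pure HUM/Fursikov--Imanuvilov construction. The two schemes are of course equivalent in spirit (your $J$ is essentially the Fenchel dual of the paper's $J_\varepsilon$ in the limit $\varepsilon\to 0$), and both hinge on exactly the same ingredients: the duality identity you wrote down and the observability estimate \eqref{obser3}. Your route is a bit more elegant and avoids the limiting step, at the price of the abstract-completion issue you correctly flag; the paper's penalization stays entirely within concrete $L^2$ spaces, so no density argument in $Z$ is needed, but one must instead verify that the weak limits $(y,p^i)$ still solve \eqref{yli}--\eqref{pli} and that $y(T)=0$ survives the passage to the limit. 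Either way the estimate \eqref{mon10theo} drops out from $J(\hat\rho^T)\le 0$ (your version) or from \eqref{mon10} and lower semicontinuity (theirs).
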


\begin{proof}

To prove this null controllability result, we proceed in three steps using a penalization method.\\
\noindent\textbf{Step 1.} For any fixed $\varepsilon >0$, we define a functional
\begin{equation}\label{defJ}
	J_{\varepsilon}(h)=\dis \frac{1}{2\varepsilon}\int_{\Omega}|y(T,\cdot)|^2\ dx+
	\frac{1}{2}\int_{\omega_T}|h|^2\ \dq,
\end{equation}
where $y$ is the solution to problem \eqref{yli}-\eqref{pli}.
Then we consider the optimal control problem: 
\begin{equation}\label{opt}
	\inf_{\atop h\in L^2(\omega_T)}J_{\varepsilon}(h).
\end{equation}
By standard arguments, we can prove that the functional $J_{\varepsilon}$ is continuous, coercive and strictly convex. Then, the optimization problem \eqref{opt} admits a unique solution $h_\varepsilon$  and arguing as in \cite{djomegne2021}, we prove that 
\begin{equation}\label{hgeps}
	h_{\varepsilon}=\rho_{\varepsilon}\ \ \mbox{in}\ \ \omega_T,
\end{equation}	
with $(\rho_\varepsilon,\psi^i_\varepsilon)$ is the solution of the following systems
\begin{equation}\label{rhoeps}
	\left\{
	\begin{array}{rllll}
		\dis -\rho_{\varepsilon,t}-\left(a(x)\rho_{\varepsilon,x}\right)_{x}+a_1\rho_{\varepsilon}-(\beta(x)a_2\rho_{\varepsilon})_x  &=&\dis (\alpha_1\psi_{\varepsilon}^1+\alpha_2\psi_{\varepsilon}^2)\chi_{\omega_{d}}& \mbox{in}& Q,\\
		\dis \rho_{\varepsilon}(t,0)=\rho_{\varepsilon}(t,1)&=&0& \mbox{on}& (0,T), \\
		\dis  \rho_{\varepsilon}(T,\cdot)&=&-\dis \frac{1}{\varepsilon}y_{\varepsilon} (T,\cdot) &\mbox{in}&\Omega
	\end{array}
	\right.
\end{equation}
and
\begin{equation}\label{psieps}
	\left\{
	\begin{array}{rllll}
		\dis \psi_{\varepsilon,t}^i-\left(a(x)\psi^i_{\varepsilon,x}\right)_{x}+b_1\psi_{\varepsilon}^i +\beta(x)b_2\psi^i_{\varepsilon,x}  &=&\dis-\frac{1}{\mu_i}\rho_{\varepsilon}\chi_{\omega_{i}}& \mbox{in}& Q,\\
		\dis \psi_{\varepsilon}^i(t,0)=\psi_{\varepsilon}^i(t,1)&=&0& \mbox{on}& (0,T), \\
		\dis  \psi_{\varepsilon}^i(0,\cdot)&=&0 &\mbox{in}&\Omega,
	\end{array}
	\right.
\end{equation}
where $(y_\varepsilon, p^i_\varepsilon)$ is solution of 
\begin{equation}\label{yeps}
	\left\{
	\begin{array}{rllll}
		\dis y_{\varepsilon,t}-\left(a(x)y_{\varepsilon,x}\right)_{x}+a_1y_{\varepsilon}+\beta(x)a_2y_{\varepsilon,x}  &=&\dis h_\varepsilon\chi_{\omega}-\frac{1}{\mu_1}p_{\varepsilon}^1\chi_{\omega_1}-\frac{1}{\mu_2}p_{\varepsilon}^2\chi_{\omega_2}& \mbox{in}& Q,\\
		\dis y_{\varepsilon}(t,0)=y_{\varepsilon}(t,1)&=&0& \mbox{on}& (0,T), \\
		\dis  y_{\varepsilon}(0,\cdot)&=&y^0&\mbox{in}&\Omega
	\end{array}
	\right.
\end{equation}
and 
\begin{equation}\label{peps}
	\left\{
	\begin{array}{rllll}
		\dis -p_{\varepsilon,t}^i-\left(a(x)p^i_{\varepsilon,x}\right)_{x}+b_1p_{\varepsilon}^i-(\beta(x)b_2p_{\varepsilon}^i)_x    &=&\alpha_i\left(y_{\varepsilon}-y_{i,d}\right)\chi_{\omega_{d}}& \mbox{in}& Q,\\
		\dis p_{\varepsilon}^i(t,0)=p_{\varepsilon}^i(t,1)&=&0& \mbox{on}& (0,T), \\
		\dis p_{\varepsilon}^i(T,\cdot)&=&0 &\mbox{in}&\Omega.
	\end{array}
	\right.
\end{equation}
\textbf{Step 2.} 
Multiplying  the first equation of \eqref{rhoeps} and \eqref{psieps}  by $y_{\varepsilon}$ and $p^i_{\varepsilon}$ respectively and integrating by parts over $Q$, we obtain from \eqref{hgeps}
$$
\begin{array}{rlll}
\dis \|h_\varepsilon\|^2_{L^2(\omega_T)}+\frac{1}{\varepsilon}\|y_{\varepsilon}(T,\cdot)\|^2_{L^2(\Omega)}
=\dis-\int_{\Omega}y^0\rho_\varepsilon(0,\cdot)\ dx+
\dis \sum_{i=1}^2\alpha_i\int_{0}^{T}\int_{\omega_d}y_{i,d}\psi^i_{\varepsilon} \dT.	
\end{array}
$$
Using the Young inequality, one can get that
\begin{equation}\label{rosnymon10}
	\begin{array}{ccc}
		\dis \|h_\varepsilon\|^2_{L^2(\omega_T)}+\frac{1}{\varepsilon}\|y_{\varepsilon}(T,\cdot)\|^2_{L^2(\Omega)}
		&\leq&
		\dis  \left(\sum_{i=1}^2\alpha_i^2\left\|\kappa^{-1} y_{i,d}\right\|^2_{L^2((0,T)\times\omega_d)}+ \|y^0\|^2_{L^2(\Omega)}\right)^{1/2}\\ 
		&&\dis \times\left( \sum_{i=1}^2 \left\|\kappa\psi_{\varepsilon}^i\right\|^2_{L^2(Q)}+
		\|\rho_{\varepsilon}(0,\cdot)\|^2_{L^2(\Omega)}\right)^{1/2}.
	\end{array}
\end{equation}
Using the observability inequality \eqref{obser3}, we deduce from \eqref{rosnymon10} the existence of a constant $C=C(C_2,\|a_1\|_{\infty}, \|a_2\|_{\infty},\|b_1\|_{\infty}, \|b_2\|_{\infty},\mu_1,\mu_2, T)>0$ such that
\begin{equation}\label{mon10}
	\begin{array}{ccc}
		\dis \|h_\varepsilon\|_{L^2(\omega_T)}&\leq&C
		\dis  \left(\sum_{i=1}^2\alpha_i^2\left\|\kappa^{-1} y_{i,d}\right\|^2_{L^2((0,T)\times\omega_d)}+ \|y^0\|^2_{L^2(\Omega)}\right)^{1/2}
	\end{array}
\end{equation}
and
\begin{equation}\label{mon10b}
	\begin{array}{ccc}
		\dis \|y_{\varepsilon}(T,\cdot)\|_{L^2(\Omega)}&\leq&
		\dis C\sqrt{\varepsilon} \left(\sum_{i=1}^2\alpha_i^2\left\|\kappa^{-1} y_{i,d}\right\|^2_{L^2((0,T)\times\omega_d)}+ \|y^0\|^2_{L^2(\Omega)}\right)^{1/2}.
	\end{array}
\end{equation}
Using \eqref{mon10}-\eqref{mon10b} and systems \eqref{yeps}-\eqref{peps}, we can extract subsequences still denoted by $h_\varepsilon,\ y_\varepsilon $ and $p^i_\varepsilon$ such that when $\varepsilon \rightarrow 0$, one has
\begin{subequations}\label{convergence2}
	\begin{alignat}{9}
		h_\varepsilon&\rightharpoonup& \bar{h}&\text{ weakly in }&L^{2}(\omega_T), \label{18k}\\
		y_{\varepsilon}&\rightharpoonup& y&\text{  weakly in }&L^2((0,T);H^1_a(\Omega)), \label{}\\
		p^i_{\varepsilon}&\rightharpoonup& p^i&\text{  weakly in }&L^2((0,T);H^1_a(\Omega)),\ i=1,2, \label{19}\\
		y_{\varepsilon}(T,\cdot)&\longrightarrow&0&\hbox{ strongly in }&\ L^2(\Omega).\label{all5}
	\end{alignat}
\end{subequations}

Arguing as in \cite{djomegne2018,djomegne2021}, using convergences \eqref{convergence2}, we prove that $(y,\ p^i)$ is a solution of \eqref{yli}-\eqref{pli} corresponding to the control $\bar{h}$ and also $y$ satisfies \eqref{mainobj}. Furthermore, using the convergence \eqref{18k}, we have that $\bar{h}$ satisfies \eqref{mon10theo}.
\end{proof}

\subsection{Proof of Theorem \ref{theolinear}}
In this subsection, we want to end the proof of Theorem \ref{theolinear}.
We have proved in Proposition \ref{quasi} and Theorem \ref{nash} that the Nash equilibrium for $(J_1,J_2)$ given by \eqref{all16}, $(\hat{v}_1, \hat{v}_2)$ is characterised by \eqref{vop}-\eqref{pop}. In Proposition \ref{linear}, we proved that the linear systems \eqref{yli}-\eqref{pli} is null controllable at time $t=T$. We are now going to prove that, there exists a control $\bar{h}\in L^2(\omega_T)$ such that the solution of \eqref{yop}-\eqref{pop} satisfies \eqref{mainobj}.

We define $Z=L^2((0,T);H^1_{a}(\Omega))$. We observe that, for any $y\in Z$, we have 
$$
F(y,y_x)-F(0,0)=F_1(y,y_x)y+F_2(y,y_x)y_x,
$$
where 
\begin{equation}\label{abbis}
	F_1(y,y_x)=\dis \int_0^1 D_1F(ry,ry_x)\ dr\ \mbox{and}\ F_2(y,y_x)=\dis \int_0^1 D_2F(ry,ry_x)\ dr.
\end{equation}
 For every $z\in Z$, we consider the linearized system for \eqref{yop}-\eqref{pop} 
 \begin{equation}\label{ylina}
 	\left\{
 	\begin{array}{rllll}
 		\dis y_{t}-\left(a(x)y_{x}\right)_{x}+F_1(z,z_x)y+\beta(x)\left[\frac{F_2(z,z_x)}{\beta(x)}\right]y_x  &=&\dis h\chi_{\omega}-\frac{1}{\mu_1}p^1\chi_{\omega_1}-\frac{1}{\mu_2}p^2\chi_{\omega_2}& \mbox{in}& Q,\\
 		\dis y(t,0)=y(t,1)&=&0& \mbox{on}& (0,T), \\
 		\dis y(0,\cdot)&=&y^0&\mbox{in}&\Omega
 	\end{array}
 	\right.
 \end{equation}
 and 
 \begin{equation}\label{plina}
 	\left\{
 	\begin{array}{rllll}
 		\dis -p_{t}^i-\left(a(x)p^i_{x}\right)_{x}+b_1p^i-(\beta(x)b_2p^i)_x   &=&\alpha_i\left(y-y_{i,d}\right)\chi_{\omega_{d}}& \mbox{in}& Q,\\
 		\dis p^i(t,0)=p^i(t,1)&=&0& \mbox{on}& (0,T), \\
 		\dis p^i(T,\cdot)&=&0 &\mbox{in}&\Omega.
 	\end{array}
 	\right.
 \end{equation}
 
Observe that systems \eqref{ylina}-\eqref{plina} are of the form \eqref{yli}-\eqref{pli} with 
 \begin{equation}\label{abc}
	\left\{
	\begin{array}{rllll}
	\dis a_1=a_1^z=F_1(z,z_x),\ \ a_2=a_2^z=\frac{F_2(z,z_x)}{\beta(x)},\\
	\dis 	b_1:=b_1^z=D_1F(z,z_x),\ \ b_2:=b_2^z=\frac{D_2F(z,z_x)}{\beta(x)}.
	\end{array}
	\right.
\end{equation}
Thanks to assumptions on $F$ (see $(H_5)$ and $(H_6)$ of Assumption \ref{assum}), there exists a positive constant $M$ such that
\begin{equation}
	\|a_1^z\|_{\infty},\ \|a_2^z\|_{\infty},\ \|b_1^z\|_{\infty},\ \|b_2^z\|_{\infty} \leq M,\ \forall z\in Z.
\end{equation}

 From Proposition \ref{linear}, there exits a control $\bar{h}^z\in L^2(\omega_T)$ such that the solution $(y^z, p^{i,z})$ to \eqref{ylina}-\eqref{plina} corresponding to coefficients given by \eqref{abc} and $\bar{h}=\bar{h}^z$ satisfies \eqref{mainobj},
and furthermore we have the estimate
\begin{equation}\label{mon10theoa}
	\begin{array}{ccc}
		\dis \|\bar{h}^z\|_{L^2(\omega_T)}\leq C
		\dis  \left(\sum_{i=1}^2\alpha_i^2\left\|\kappa^{-1} y_{i,d}\right\|^2_{L^2((0,T)\times\omega_d)}+ \|y^0\|^2_{L^2(\Omega)}\right)^{1/2},\ \forall z\in Z,
	\end{array}
\end{equation}
where
$C=C(C_2,\|a_1\|_{\infty}, \|a_2\|_{\infty},\|b_1\|_{\infty}, \|b_2\|_{\infty},\mu_1,\mu_2, T)>0$.

From Proposition \ref{exis}, we obtain that
$$
y^z, p^{i,z}\in Z,
$$
and using \eqref{mon10theoa} and \eqref{v}, we obtain
\begin{equation}\label{y}
	\dis \|y^z\|_{Z}+\|p^{i,z}\|_{Z}
	\leq 
	C\left(\sum_{i=1}^{2}\|y_{i,d}\|_{L^2((0,T)\times\omega_{d})}+
	\sum_{i=1}^2\alpha_i^2\left\|\kappa^{-1} y_{i,d}\right\|^2_{L^2((0,T)\times\omega_{d})}+\|y^0\|_{L^2(\Omega)}\right),
\end{equation} 	
where
$C=C(C_2,\|a_1\|_{\infty}, \|a_2\|_{\infty},\|b_1\|_{\infty}, \|b_2\|_{\infty},\mu_1,\mu_2,\alpha_1,\alpha_2, T)>0$.

For every $z\in Z$, we define

\begin{equation}
	I(z)=\left\{\bar{h}^z\in L^2(\omega_T),\ (y^z,p^{i,z})\ \mbox{solution of}\ \eqref{ylina}-\eqref{plina}\ \mbox{satisfies}\ \eqref{mainobj}\ \mbox{with}\ \bar{h}^z\ \mbox{verifying}\ \eqref{mon10theoa}\right\}	
\end{equation}
and 
\begin{equation}
	\Lambda(z)=\left\{(y^z,p^{i,z}):\ (y^z,p^{i,z})\ \mbox{is the state associated to a control}\ \bar{h}^z\in I(z)\ \mbox{and}\ (y^z,p^{i,z})\ \mbox{satisfies}\ \eqref{y}\right\}.	
\end{equation}
In this way, we introduce a multivalued mapping
$$
z\longmapsto \Lambda(z).
$$
We want to prove that this mapping has a fixed point $y$. Of course, this will imply that there exists a control $\bar{h}\in L^2(\omega_T)$ such that the solution of \eqref{yop}-\eqref{pop} satisfies \eqref{mainobj}.

To this end, we will use the Kakutani's fixed point Theorem that can be applied on $\Lambda$. Proceeding as in \cite{ danynina2021, birba2016}, we can prove the following properties for every $z\in Z$.

\begin{prop}$ $
	
	\begin{enumerate}
		\item $\Lambda(z)$ is a non empty, closed and convex set of $Z$.
		\item $\Lambda(z)$ is a bounded and compact set of $Z$.
		\item The application $z\longmapsto\Lambda(z)$ is upper hemi-continuous. 
	\end{enumerate}	
\end{prop}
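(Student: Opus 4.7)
The three properties will be verified in sequence, exploiting the fact that once $z \in Z$ is fixed, the system \eqref{ylina}-\eqref{plina} is linear in the state $(y,p^1,p^2)$ and the control $\bar{h}^z$, and that the frozen coefficients $a_1^z,a_2^z,b_1^z,b_2^z$ defined in \eqref{abc} are uniformly bounded in $L^\infty(Q)$ by assumptions $(H_5)$-$(H_6)$, with a bound independent of $z$.

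For Part (1), non-emptiness of $\Lambda(z)$ is immediate from Proposition \ref{linear} applied with the coefficients \eqref{abc}: this produces $\bar{h}^z \in I(z)$, and the associated state satisfies \eqref{y} thanks to Proposition \ref{exis}, the estimate \eqref{v}, and \eqref{mon10theoa}. Convexity is a direct consequence of the linearity of \eqref{ylina}-\eqref{plina}: if $(y_k,p_k^i)$ correspond to $\bar{h}_k^z \in I(z)$, $k=1,2$, then the convex combination $\lambda(y_1,p_1^i)+(1-\lambda)(y_2,p_2^i)$ solves the same system with control $\lambda \bar{h}_1^z+(1-\lambda)\bar{h}_2^z$, still hits zero at $t=T$, and inherits both \eqref{mon10theoa} and \eqref{y} by convexity of the norms. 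For closedness, take $(y_n,p_n^i)\in\Lambda(z)$ converging to $(y^*,p^{i,*})$ in $Z$; the associated controls $\bar{h}_n^z$ are bounded in $L^2(\omega_T)$ by \eqref{mon10theoa}, hence $\bar{h}_n^z \rightharpoonup \bar{h}^*$ up to extraction, and passing to the limit in the weak formulation (Definition \ref{weaksolution}) preserves \eqref{ylina}-\eqref{plina}. The continuous embedding $\mathbb{H}\hookrightarrow C([0,T];L^2(\Omega))$ ensures $y^*(T,\cdot)=0$, and lower semicontinuity of norms preserves \eqref{mon10theoa} and \eqref{y}.

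For Part (2), boundedness is built into the definition of $\Lambda(z)$. To get compactness, observe that any $(y,p^i)\in\Lambda(z)$ comes from \eqref{ylina}-\eqref{plina} with the right-hand side bounded in $L^2(Q)$ uniformly in $\Lambda(z)$; Proposition \ref{exis} together with the PDE itself yields a uniform bound on $(y_t,p_t^i)$ in $L^2((0,T);(H^1_a(\Omega))')$. The compact embedding $H^1_a(\Omega)\hookrightarrow L^2(\Omega)$ stated in Remark \ref{remwta} combined with Aubin-Lions gives relative compactness in $L^2(Q)$; strengthening this to compactness in $Z$ proceeds via the standard energy identity applied to differences $(y_n-y_m)$, in which the $L^2((0,T);H^1_a(\Omega))$-norm of the difference is controlled by its $L^2(Q)$-norm together with uniformly small end-term contributions at $t=T$, yielding a Cauchy property in $Z$. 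Combined with closedness from Part (1) this gives compactness.

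For Part (3), consider a sequence $z_n \to z$ in $Z$ and $(y_n,p_n^i)\in \Lambda(z_n)$ with controls $\bar{h}_n:=\bar{h}_n^{z_n}$. The uniform estimate \eqref{mon10theoa} (whose constant depends only on $\|a_i^{z_n}\|_\infty,\|b_i^{z_n}\|_\infty$, which are controlled by $M_1,M_2$ independently of $n$) gives $\bar{h}_n\rightharpoonup \bar{h}^*$ in $L^2(\omega_T)$ up to extraction, and by the argument of Part (2), $(y_n,p_n^i)\to (y^*,p^{i,*})$ strongly in $L^2(Q)$ and weakly in $Z$. Passing to a further subsequence, $z_n\to z$ and $(z_n)_x\to z_x$ a.e. in $Q$ (the latter on $\{a>0\}$, which is of full measure). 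By continuity of $F$ (assumption $(H_2)$) and the uniform $L^\infty$ bounds from $(H_5)$-$(H_6)$, dominated convergence yields $F_1(z_n,(z_n)_x)\to F_1(z,z_x)$ and $F_2(z_n,(z_n)_x)/\beta(x)\to F_2(z,z_x)/\beta(x)$ strongly in $L^p(Q)$ for every $p<\infty$. Combining strong convergence of the coefficients with weak convergence of $y_n,(y_n)_x,p_n^i$ lets us pass to the limit in each product appearing in the weak formulations, so $(y^*,p^{i,*})$ solves \eqref{ylina}-\eqref{plina} at $z$ with control $\bar{h}^*$; lower semicontinuity preserves \eqref{mon10theoa} and \eqref{y}, and $y^*(T,\cdot)=0$ as in Part (1). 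Hence $(y^*,p^{i,*})\in\Lambda(z)$, establishing upper hemi-continuity.

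\textbf{Main obstacle.} The critical step is the passage to the limit in the product $[F_2(z_n,(z_n)_x)/\beta(x)](y_n)_x$, where $(y_n)_x$ only converges weakly in $L^2_{\text{loc}}(\{a>0\})$ (equivalently, $\sqrt{a}(y_n)_x$ weakly in $L^2(Q)$). The argument hinges on producing strong $L^p$-convergence of the coefficient through a.e. convergence of $(z_n)_x$ and dominated convergence (which requires the uniform bound $(H_5)$ on $|F_2|/\beta$), so that the degenerate weight $\beta(x)$ plays no harmful role. Handling the degeneracy of $a(x)$ consistently, so that compactness in Part (2) truly holds in the weighted space $Z$ rather than only in $L^2(Q)$, is the second delicate point and relies on the smoothing effect of the parabolic operator together with the Hardy-Poincaré inequality \eqref{hardy}.
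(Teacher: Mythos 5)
The paper does not actually prove this proposition: it simply states that the three properties can be established ``proceeding as in'' the cited references \cite{danynina2021, birba2016}, so there is no in-paper argument to compare yours against line by line. Your proof supplies the standard argument that those references use, and it is sound in outline. The one genuinely non-routine point is the one you identify in Part (2): Aubin--Lions (via Remark \ref{remwta} and the uniform bound on $y_{n,t}$, $p^i_{n,t}$ in $L^2((0,T);(H^1_a(\Omega))')$) only gives relative compactness in $L^2(Q)$, not in $Z=L^2((0,T);H^1_a(\Omega))$, and your upgrade via the energy identity for differences is exactly what closes this: for $w=y_n-y_m$ one gets $\int_Q a|w_x|^2\lesssim \|w\|_{L^2(Q)}^2+\|\mbox{source}_n-\mbox{source}_m\|_{L^2(Q)}\|w\|_{L^2(Q)}$, and since the source difference is merely bounded while $\|w\|_{L^2(Q)}\to 0$ along the Aubin--Lions subsequence, the weighted gradient norm of the difference tends to zero; the same works for $p^i_n-p^i_m$ with source $\alpha_i(y_n-y_m)\chi_{\omega_d}$. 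Two smaller points deserve explicit justification in a final write-up. First, in Part (1) the conclusion $y^*(T,\cdot)=0$ does not follow from convergence in $Z$ alone; you need the uniform bound on the time derivatives to get $y_n\to y^*$ in $C([0,T];(H^1_a(\Omega))')$ and hence convergence of the traces at $t=T$ (the embedding \eqref{contWTA} applies to each $y_n$ and to $y^*$, but the passage to the limit of the end values requires this equicontinuity in time). Second, in Part (3) the limit passage in the term $\int_Q\beta(x)a_2^{z_n}(y_n)_x\phi\,\dq$ should be written as $\int_Q(\sqrt{a}\,(y_n)_x)\bigl(\tfrac{\beta}{\sqrt{a}}a_2^{z_n}\phi\bigr)\dq$, pairing the weak $L^2$-convergence of $\sqrt{a}\,(y_n)_x$ with the strong $L^2$-convergence of the second factor, which follows from $|\beta|\leq L\sqrt{a}$, the uniform bound $(H_5)$, the a.e.\ convergence of $(z_n)_x$ on $\{a>0\}$, and dominated convergence; your text gestures at this but the precise splitting is what makes the degeneracy harmless. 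With these details made explicit your argument is complete and matches what the cited references do.
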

This end the proof of Theorem \ref{theolinear} and furthermore the proof of null controllability of system \eqref{eq}.	\hfill $\blacksquare$

\section{Conclusion}  \label{conclusion}

In this paper, we established the Stackelberg-Nash null controllability of a semilinear degenerate parabolic equation with a non-linearity involving a gradient term. We proved that we can act on the system with one leader and two followers. Since the system is semilinear, the functionals are not convex in general. To overcome this difficulty, we first obtain  the existence and uniqueness of the Nash quasi-equilibrium, which is a weaker formulation of the Nash equilibrium. Next, with additional conditions, we established the equivalence between the Nash quasi-equilibrium and the Nash equilibrium. Finally, after establishing suitable Carleman estimates, we proved an observability inequality allowing us to obtain our null controllability result.


\section{Appendix}

\subsection{Proof of Proposition \ref{exis}}

\begin{proof}

We proceed in three steps.\\
\noindent \textbf{Step 1.} We show the estimate \eqref{esty1y2}.
Make the change of variable $z(t,x)=e^{-rt}y(t,x),\,\, (t,x)\in Q$,  for some $r>0$ where $y$ is solution to \eqref{ywell}. We obtain that $z$ is solution to
\begin{equation}\label{p1}
	\left\{
	\begin{array}{rllll}
		\dis z_t-(a(x)z_x)_x+a_0z+\beta(x)b_0z_x+r z &=&(h\chi_{\omega}+v^1\chi_{\omega_1}+v^2\chi_{\omega_2})e^{-rt} \qquad &\mbox{in}& Q,\\
		\dis z(t,0)=z(t,1)&=&0& \mbox{on}& (0,T), \\
		\dis z(0,\cdot)&=&y^0&\mbox{in}&\Omega.
	\end{array}
	\right.
\end{equation}
If we multiply the first equation in \eqref{p1} by $z$ and integrate by parts over $Q$, we obtain
\begin{equation*}
	\begin{array}{rll}
		\dis \int_{Q}z_tz\ dxdt-\int_{Q}(a(x)z_x)_x z\ dxdt+\int_{Q}rz^2\ dxdt=\dis -\int_{Q}a_0z^2\ dxdt-\int_{Q}\beta(x)b_0z_xz\ dxdt\\
		\dis+ \int_{Q}z(h\chi_{\omega}+v^1\chi_{\omega_1}+v^2\chi_{\omega_2})e^{-rt}\ dxdt.
	\end{array}
\end{equation*}
This latter equality becomes
\begin{equation}\label{estimation1A1}
	\begin{array}{rlll}
		&&\dis\frac{1}{2}\|z(T,\cdot)\|^2_{L^2(\Omega)}-\frac{1}{2}\|z(0,\cdot)\|^2_{L^2(\Omega)}+\|\sqrt{a(x)}z_x\|^2_{L^2(Q)}+r\| z\|^2_{L^2(Q)}\\
		&&\dis\leq \int_{Q}a_0z^2\ dxdt+\int_{Q}\beta(x)b_0z_xz\ dxdt
		\dis+ \int_{Q}z(h\chi_{\omega}+v^1\chi_{\omega_1}+v^2\chi_{\omega_2})e^{-rt}\ dxdt.
	\end{array}
\end{equation}
We have 
\begin{equation}\label{ze}
\int_{Q}a_0z^2\ dxdt\leq \|a_0\|_{\infty}\| z\|^2_{L^2(Q)}.	
\end{equation}
Using the fact that $\dis |\beta(x)|\leq L\sqrt{a(x)}$ (see \eqref{b1}), we have
\begin{equation}\label{xx}
	\begin{array}{rlll}
	\dis \int_{Q}\beta(x)b_0z_xz\ dxdt&\leq&\dis
	\int_{Q}L\sqrt{a(x)}b_0z_xz\ dxdt\\
	&\leq&\dis \frac{1}{2}L^2\|b_0\|^2_{\infty}\| z\|^2_{L^2(Q)}+\frac{1}{2}\|\sqrt{a(x)}z_x\|^2_{L^2(Q)}.
	\end{array}
\end{equation}
Due to the fact that $\dis e^{-rt}\leq 1,\ \forall t\in [0,T]$, we get
\begin{equation}\label{zea}
	\begin{array}{rlll}
		\dis \int_{Q}z(h\chi_{\omega}+v^1\chi_{\omega_1}+v^2\chi_{\omega_2})e^{-rt}\ dxdt&\leq&\dis
		\int_{Q}z(h\chi_{\omega}+v^1\chi_{\omega_1}+v^2\chi_{\omega_2})\ dxdt\\
		&\leq&\dis \frac{3}{2}\| z\|^2_{L^2(Q)}+\frac{1}{2}\|v^1\|^2_{L^2(\omega_{1,T})}+\frac{1}{2}\|v^2\|^2_{L^2(\omega_{2,T})}+\frac{1}{2}\|h\|^2_{L^2(\omega_T)}.
	\end{array}
\end{equation}
Combining \eqref{ze}-\eqref{zea} with \eqref{estimation1A1}, one obtains

\begin{equation*}
	\begin{array}{rlll}
		&&\dis\frac{1}{2}\|z(T,\cdot)\|^2_{L^2(\Omega)}+\frac{1}{2}\|\sqrt{a(x)}z_x\|^2_{L^2(Q)}+\left(r-\|a_0\|_{\infty}-\frac{1}{2}L^2\|b_0\|^2_{\infty}-\frac{3}{2}\right)\| z\|^2_{L^2(Q)}\\
		&&\dis \leq\frac{1}{2}\|y^0\|^2_{L^2(\Omega)}+\frac{1}{2}\|v^1\|^2_{L^2(\omega_{1,T})}+\frac{1}{2}\|v^2\|^2_{L^2(\omega_{2,T})}+\frac{1}{2}\|h\|^2_{L^2(\omega_T)}.
	\end{array}
\end{equation*}
Taking $r$ such that $\dis r=\|a_0\|_{\infty}+\frac{1}{2}L^2\|b_0\|^2_{\infty}+2$, we obtain
\begin{equation*}
	\begin{array}{rlll}
		\dis\|z(T,\cdot)\|^2_{L^2(\Omega)}+\| z\|^2_{L^2((0,T);H^1_a(\Omega))}
		\dis \leq\|y^0\|^2_{L^2(\Omega)}+\|v^1\|^2_{L^2(\omega_{1,T})}+\|v^2\|^2_{L^2(\omega_{2,T})}+\|h\|^2_{L^2(\omega_T)}.
	\end{array}
\end{equation*}
Since $z=e^{-rt}y$, we deduce the existence  of a constant $C=C(T,\|a_0\|_{\infty}, \|b_0\|_{\infty})>0$ such that the following estimation holds:
\begin{equation*}
	\begin{array}{llllll}
		\dis \|y(T,\cdot)\|^2_{L^2(\Omega)}+\|y\|^2_{L^2((0,T); H^1_a(\Omega))}
		\leq 
		C\left(\|v^1\|^2_{L^2(\omega_{1,T})}+\|v^2\|^2_{L^2(\omega_{2,T})}+\|h\|^2_{L^2(\omega_T)}+\|y^0\|^2_{L^2(\Omega)}\right)
	\end{array}
\end{equation*} 	
and we deduce the inequality \eqref{esty1y2}.

\noindent \textbf{Step 2.} We prove existence by using Theorem \ref{Theolions61}. First of all, it is clear that for any $\phi\in \mathbb{V},$ we have
$$\|\phi\|_{L^2((0,T);H^1_a(\Omega))}\leq \|\phi\|_{\mathbb{V}}.$$
This shows that we have the continuous embedding $\mathbb{V}\hookrightarrow L^2((0,T);H^1_a(\Omega))$.

Now, let $\phi \in \mathbb{V}$ and consider the bilinear form $\mathcal{A}(\cdot,\cdot)$ defined on $L^2((0,T);H^1_a(\Omega))\times \mathbb{V}$ by:
\begin{equation}\label{defCalE}
	\begin{array}{lll}
		\mathcal{A}(y,\phi)&=&\dis -\int_Q y\phi_t \, \dq + \int_Q a(x)y_x\phi_x\,\dq+\int_{Q}a_0 y\phi\, \dq+\int_{Q}\beta(x)b_0 y_x\phi\, \dq.
	\end{array}
\end{equation}
Using Cauchy Schwarz inequality, Remark \ref{rmktrace} and the fact that $\dis |\beta(x)|\leq L\sqrt{a(x)}$, we get that
$$\begin{array}{lllll}
	\dis |\mathcal{A}(y,\phi)|&\leq&\dis  \|y\|_{L^2(Q)}\|\phi_t\|_{L^2(Q)}+ \|\sqrt{a(x)}y_x\|_{L^2(Q)}\|\sqrt{a(x)}\phi_x\|_{L^2(Q)}+\|a_0\|_{\infty}\|y\|_{L^2(Q)}\|\phi\|_{L^2(Q)}\\
	&+& \dis  \|\sqrt{a(x)}y_x\|_{L^2(Q)}\|Lb_0\|_{\infty}\|\phi\|_{L^2(Q)}\\
	&\leq& \dis \sqrt{2}\left[\|\phi_t\|^2_{L^2(Q)}+\|\sqrt{a(x)}\phi_x\|^2_{L^2(Q)}+(\|a_0\|^2_{\infty}+L^2\|b_0\|^2_{\infty})\|\phi\|^2_{L^2(Q)}\right]^{1/2}\|y\|_{L^2((0,T);H^1_a(\Omega))}.
\end{array}
$$
This means that  there is a constant $C=C(\phi,\|a_0\|_{\infty},\|b_0\|_{\infty})>0$  such that
$$|\mathcal{A}(y,\phi)|\leq C\|y\|_{L^2((0,T);H^1_a(\Omega))}. $$
Consequently, for every fixed $\phi\in \mathbb{V},$
the functional  $y\mapsto \mathcal{A}(y,\phi)$ is continuous on $L^2((0,T);H^1_a(\Omega)).$

Next,  we have that for every  $\phi\in\mathbb{V}$,
\begin{equation}\label{xxl}
	\begin{array}{lllll}
		\mathcal{A}(\phi,\phi)&=&\dis -\int_Q \phi\phi_t \, \dq + \int_Q a(x)\phi_x^2\,\dq+\int_{Q}a_0 \phi^2\, \dq+\int_{Q}\beta(x)b_0 \phi_x\phi\, \dq.
	\end{array}
\end{equation}
Due to Assumption \ref{assum}, we get
$$
\int_{Q}a_0 \phi^2\, \dq\geq \alpha\|\phi\|^2_{L^2(Q)}.
$$
Using \eqref{xx}, one have
\begin{equation*}\label{}
	\begin{array}{rlll}
		\dis \int_{Q}\beta(x)b_0\phi_x\phi\ dxdt\geq\dis -\frac{1}{2}L^2\|b_0\|^2_{\infty}\| \phi\|^2_{L^2(Q)}-\frac{1}{2}\|\sqrt{a(x)}\phi_x\|^2_{L^2(Q)}.
	\end{array}
\end{equation*}
Combining the two latter  inequalities with \eqref{xxl} and using again Assumption \ref{assum}, we obtain
$$
\begin{array}{lllll}
	\mathcal{A}(y,\phi)
	&\geq&\dis \frac{1}{2}\|\phi(0,\cdot)\|^2_{L^2(\Omega)}
	+\dis  \frac{1}{2}\|\sqrt{a(x)}\phi_x\|^2_{L^2(Q)}+\left(\alpha-\frac{1}{2}L^2\|b_0\|^2_{\infty}\right)\| \phi\|^2_{L^2(Q)}\\
	&\geq& \dis \min\left\{\frac{1}{2},\left(\alpha-\frac{1}{2}L^2\|b_0\|^2_{\infty}\right)\right\}\|\phi\|^2_{\mathbb{V}}.
\end{array}
$$

Finally, let us consider the linear functional $\mathcal{L}(\cdot):\mathbb{V}\to \R$ defined by
$$\begin{array}{lllll}
	\mathcal{L}(\phi):=\dis \int_{Q} (h\chi_{\omega}+v^1\chi_{\omega_1}+v^2\chi_{\omega_2})\, \phi\; \dq
	+\int_{\Omega} y^0(x)\,\phi(0,x)\ dx.
\end{array}
$$
Then using Remark \ref{rmktrace}, we obtain
$$\begin{array}{lll}
	|\mathcal{L}(\phi)|&\leq&\dis \|h\chi_{\omega}+v^1\chi_{\omega_1}+v^2\chi_{\omega_2}\|_{L^2(Q)}\|\phi\|_{L^2(Q)}+ \|y^0\|_{L^2(\Omega)}\|\phi(0,\cdot)\|_{L^2(\Omega)}\\
	&\leq&\dis (\|h\chi_{\omega}+v^1\chi_{\omega_1}+v^2\chi_{\omega_2}\|_{L^2(Q)}+ \|y^0\|_{L^2(\Omega)})\|\phi\|_{\mathbb{V}}\\
	&\leq & C\|\phi\|_{\mathbb{V}},
\end{array}
$$
where $C= C(T,h,v^1,v^2)>0$.
Therefore,  $\mathcal{L}(\cdot)$ is continuous on $\mathbb{V}$. Thus,  it follows from Theorem \ref{Theolions61} that there exists $y\in L^2((0,T);H^1_a(\Omega))$ such that
\begin{equation}\label{formvar}
	\mathcal{A}(y,\phi)= \mathcal{L}(\phi),\quad \forall \varphi \in \mathbb{V}.
\end{equation}
We have shown that the system \eqref{ywell} has a solution $y\in L^2((0,T);H^1_a(\Omega))$ in the sense of Definition \ref{weaksolution}. In addition, using the first equation of \eqref{ywell}, we deduce that $y_t\in L^2((0,T);(H^{1}_a(\Omega))^\prime)$. So $y \in W_a(0,T)$ and using Remark \ref{remwta},  it follows that $y\in \mathbb{H}$.\\

	\noindent \textbf{Step 3.} We prove uniqueness.
Assume that there exist $y_1$ and $y_2$ solutions to \eqref{ywell} with the same right hand side $h,\, v^1, \, v^2$ and initial datum $y^0$.  Set $z:=e^{-rt}(y_1-y_2)$. Then $z$ satisfies
\begin{equation}\label{u0}
	\left\{
	\begin{array}{rllll}
		\dis z_t-(a(x)z_x)_x+a_0z+\beta(x)b_0z_x+r z &=&0 \qquad &\mbox{in}& Q,\\
		\dis z(t,0)=z(t,1)&=&0& \mbox{on}& (0,T), \\
		\dis z(0,\cdot)&=&0&\mbox{in}&\Omega.
	\end{array}
	\right.
\end{equation}
So, if we  multiply the first equation in \eqref{u0} by $z$, and integrate by parts over $Q$, we obtain
\begin{equation*}
	\begin{array}{rllll}
		\dis \dis\frac{1}{2}\|z(T,\cdot)\|^2_{L^2(\Omega)}+\frac{1}{2}\|\sqrt{a(x)}z_x\|^2_{L^2(Q)}+\left(r-\|a_0\|_{\infty}-\frac{1}{2}L^2\|b_0\|^2_{\infty}\right)\| z\|^2_{L^2(Q)}\leq 0.
	\end{array}
\end{equation*}
Choosing $\dis r=\|a_0\|_{\infty}+\frac{1}{2}L^2\|b_0\|^2_{\infty}+\frac{1}{2}$ in this latter inequality, we deduce that 
\begin{equation*}
	\begin{array}{rllll}
		\|z\|^2_{L^2((0,T);H^1_a(\Omega))}\leq 0.
	\end{array}
\end{equation*}
Hence $z=0$ in $Q$ and then, $y_1=y_2$	in $Q$ and we have shown uniqueness.\\
This complete the proof.
\end{proof}

\subsection{Proof of Theorem \ref{exissemi}}
\begin{proof}$ $
	
	We divide the proof into two steps.\\
	\noindent \textbf{Step 1.}
We prove the existence of the weak solution to the problem \eqref{eq} by using the Schauder fixed point theorem. We proceed as in Theorem $2.1$ \cite{xu2020}.

For any $z\in L^2((0,T);H^1_a(\Omega))$, it holds that
 \begin{equation*}
	\begin{array}{rllll}
		\dis F(z,z_x)-F(0,0)&=&\dis \int_{0}^{1}\frac{\partial}{\partial r}F(rz,rz_x)\ dr\\
		&=&\dis \int_{0}^{1}zD_1F(rz,rz_x)\ dr+\int_{0}^{1}z_xD_2F(rz,rz_x)\ dr.
	\end{array}
\end{equation*}	
Define
$$
d_1(z)=\dis \int_{0}^{1}D_1F(rz,rz_x)\ dr\ \mbox{and}\ d_2(z)=\dis \frac{1}{\beta(x)}\int_{0}^{1}D_2F(rz,rz_x)\ dr.
$$		
Then
$$
 F(z,z_x)-F(0,0)=d_1(z)z+\beta(x)d_2(z)z_x.
$$
Moreover, using the point $(H_5)$ of Assumption \ref{assum}, one get
\begin{equation}
\|d_1\|_{\infty}\leq M_2,\ \ \|d_2\|_{\infty}\leq M_2.
\end{equation}
Now, the system \eqref{eq} becomes
 \begin{equation}\label{ywella}
	\left\{
	\begin{array}{rllll}
		\dis y_{t}-\left(a(x)y_{x}\right)_{x}+d_1(z)y+\beta(x)d_2(z)y_x  &=&\dis h\chi_{\omega}+v^1\chi_{\omega_1}+v^2\chi_{\omega_2}& \mbox{in}& Q,\\
		\dis y(t,0)=y(t,1)&=&0& \mbox{on}& (0,T), \\
		\dis y(0,\cdot)&=&y^0&\mbox{in}&\Omega.
	\end{array}
	\right.
\end{equation}
It follows from Proposition \ref{exis} that the problem \eqref{ywella} admits a unique weak solution $y\in \mathbb{H}$. Define the mapping $\Lambda:L^2((0,T);H^1_a(\Omega))\longrightarrow L^2((0,T);H^1_a(\Omega))$ as follows:
$$
\Lambda(z)=y,\ \ z\in L^2((0,T);H^1_a(\Omega)),
$$	
where $y$ is the weak solution to problem \eqref{ywella}. Proceeding as in \cite{xu2020}, we can prove that the operator $\Lambda$ is continuous and compact. Moreover the range of $\Lambda$ is bounded. The operator $\Lambda$ satisfies the hypotheses of the Schauder fixed point theorem. Therefore, $\Lambda$ admits a fixed point $y\in L^2((0,T);H^1_a(\Omega))$ such that $y=\Lambda(y)\in \mathbb{H}$ is a weak solution to the problem \eqref{eq}.	

\noindent \textbf{Step 2.}
 Now, we want to prove the uniqueness of the weak solution. Assume that $\tilde{y}$ and $\bar{y}$ are two weak solutions to the system \eqref{eq} and 
 set
$$
W=\tilde{y}-\bar{y}.
$$
Observe that 
$$
F(\tilde{y}, \tilde{y}_x)-F(\bar{y}, \bar{y}_x)=\theta_1(t,x)W+\theta_2(t,x)W_x,\ (t,x)\in Q,
$$
where
$$
\theta_1(t,x)=\int_{0}^{1}D_1F(r\tilde{y}+(1-r)\bar{y},r\tilde{y}_x+(1-r)\bar{y}_x)\ dr,
$$
$$
\theta_2(t,x)=\frac{1}{\beta(x)}\int_{0}^{1}D_2F(r\tilde{y}+(1-r)\bar{y},r\tilde{y}_x+(1-r)\bar{y}_x)\ dr.
$$
Then $W$ is the solution to the following problem
\begin{equation*}\label{}
	\left\{
	\begin{array}{rllll}
		\dis W_{t}-\left(a(x)W_{x}\right)_{x}+\theta_1(t,x)W+\beta(x)\theta_2(t,x)W_x  &=&0& \mbox{in}& Q,\\
		\dis W(t,0)=W(t,1)&=&0& \mbox{on}& (0,T), \\
		\dis W(0,\cdot)&=&0&\mbox{in}&\Omega.
	\end{array}
	\right.
\end{equation*}
Thanks to $(H_5)$ and $(H_6)$ of Assumption \ref{assum}, there exists a positive constant $M$ such that
\begin{equation*}
	\|\theta_1\|_{\infty},\ \|\theta_2\|_{\infty} \leq M.
\end{equation*}
It follows from Proposition \ref{exis} that
$$
W=0\ \mbox{in}\ Q.
$$
Then 
$$
\tilde{y}=\bar{y}\ \mbox{in}\ Q.
$$
The proof is complete.	
\end{proof}

\section*{Disclosure statement}
The authors report there are no competing interests to declare.

\bibliographystyle{plain}
\bibliography{references}

\end{document}